\begin{document}

\newcommand{\mmbox}[1]{\mbox{${#1}$}}
\newcommand{\affine}[1]{\mmbox{{\mathbb A}^{#1}}}
\newcommand{\Ann}[1]{\mmbox{{\rm Ann}({#1})}}
\newcommand{\caps}[3]{\mmbox{{#1}_{#2} \cap \ldots \cap {#1}_{#3}}}
\newcommand{\N}{{\mathbb N}}
\newcommand{\Z}{{\mathbb Z}}
\newcommand{\R}{{\mathbb R}}
\newcommand{\KK}{{\mathbb K}}
\newcommand{\A}{{\mathcal A}}
\newcommand{\B}{{\mathcal B}}
\newcommand{\OO}{{\mathcal O}}
\newcommand{\C}{{\mathbb C}}
\newcommand{\PP}{{\mathbb P}}

\newcommand{\Tor}{\mathop{\rm Tor}\nolimits}
\newcommand{\Ext}{\mathop{\rm Ext}\nolimits}
\newcommand{\Hom}{\mathop{\rm Hom}\nolimits}
\newcommand{\im}{\mathop{\rm Im}\nolimits}
\newcommand{\rk}{\mathop{\rm rk}\nolimits}
\newcommand{\codim}{\mathop{\rm codim}\nolimits}
\newcommand{\supp}{\mathop{\rm supp}\nolimits}
\newcommand{\coker}{\mathop{\rm coker}\nolimits}
\newcommand{\height}{\mathop{\rm ht}\nolimits}
\newcommand{\rank}{\mathop{\rm rank}\nolimits}
\newcommand{\Ker}{\mathop{\rm Ker}\nolimits}
\newcommand{\depth}{\mathop{\rm depth}\nolimits}
\newcommand{\pd}{\mathop{\rm pd}\nolimits}
\newcommand{\grade}{\mathop{\rm grade}\nolimits}
\newcommand{\ann}{\mathop{\rm Ann }\nolimits}
\newcommand{\ra}{\rightarrow}
\newcommand{\lra}{\longrightarrow}
\newcommand{\ass}{\mathop{\rm Ass }\nolimits}
\newcommand{\Syz}{\mathop{\rm Syz}\nolimits}
\newcommand{\Spec}{\mathop{\rm Spec}\nolimits}

\sloppy
\newtheorem{defn0}{Definition}[section]
\newtheorem{prop0}[defn0]{Proposition}
\newtheorem{conj0}[defn0]{Conjecture}
\newtheorem{thm0}[defn0]{Theorem}
\newtheorem{lem0}[defn0]{Lemma}
\newtheorem{corollary0}[defn0]{Corollary}
\newtheorem{remark0}[defn0]{Remark}
\newtheorem{example0}[defn0]{Example}

\newenvironment{defn}{\begin{defn0}}{\end{defn0}}
\newenvironment{prop}{\begin{prop0}}{\end{prop0}}
\newenvironment{conj}{\begin{conj0}}{\end{conj0}}
\newenvironment{thm}{\begin{thm0}}{\end{thm0}}
\newenvironment{lem}{\begin{lem0}}{\end{lem0}}
\newenvironment{cor}{\begin{corollary0}}{\end{corollary0}}
\newenvironment{rmk}{\begin{remark0}}{\end{remark0}}
\newenvironment{exm}{\begin{example0}\rm}{\end{example0}}

\newcommand{\msp}{\renewcommand{\arraystretch}{.5}}
\newcommand{\rsp}{\renewcommand{\arraystretch}{1}}

\newenvironment{lmatrix}{\renewcommand{\arraystretch}{.5}\small
 \begin{pmatrix}} {\end{pmatrix}\renewcommand{\arraystretch}{1}}
\newenvironment{llmatrix}{\renewcommand{\arraystretch}{.5}\scriptsize
 \begin{pmatrix}} {\end{pmatrix}\renewcommand{\arraystretch}{1}}
\newenvironment{larray}{\renewcommand{\arraystretch}{.5}\begin{array}}
 {\end{array}\renewcommand{\arraystretch}{1}}

\title
{Syzygy Theorems via Comparison of Order Ideals on a Hypersurface}

\author{Phillip Griffith}
\address{Griffith: Mathematics Department \\ University of
 Illinois \\
   Urbana \\ IL 61801\\USA}
\email{pgriffit@illinois.edu}
\author{Alexandra Seceleanu}
\address{Seceleanu: Mathematics Department \\ University of
 Illinois \\
   Urbana \\ IL 61801\\USA}
\email{asecele2@math.uiuc.edu}


\begin{abstract}
We introduce  a weak order ideal property that suffices for establishing the Evans-Griffith Syzygy Theorem. We study this weak order ideal property in settings that allow for comparison between homological algebra over a local ring $R$ versus a hypersurface ring $\bar{R}=R/(x^n)$. Consequently we solve some relevant cases of the Evans-Griffith syzygy conjecture over  local rings of unramified mixed characteristic $p$, with the case of syzygies of prime ideals of Cohen-Macaulay local rings of unramified mixed characteristic being noted.  We reduce the remaining considerations to modules annihilated by $p^s$, $s>0$, that have finite projective dimension over a hypersurface ring.
\end{abstract}

\maketitle

\section{Introduction}

Let $R$ be a commutative ring and let $x\in R$ be a non-zerodivisor. Our main objective is to understand how homological algebra over a ring $\bar{R}=R/(x)$ relates to that over $R$, with a focus on establishing relations between order ideals of syzygies over the two rings. As explained in the final remarks, it is particularly interesting to understand the case when both $R$ and $\bar{R}$ are regular, with $R$ of mixed characteristic and  $\bar{R}$ ramified or of characteristic $p$. 
\smallskip

 For $k \geq 0$, we say that an $R$-module $E$ is a $k^{th}$ syzygy if it arises as the cokernel of the $(k+1)^{st}$ differential in a  projective acyclic complex of finitely generated $R$-modules. In case $R$ is local, it suffices to consider minimal acyclic free resolutions. We shall often use the notation $\Syz_k(M)$ for the $k^{th}$ syzygy of an $R$-module $M$. When $M$
has finite projective dimension and, in addition, $M$ has a finite free resolution
 then the rank of $M$ may be defined as the alternating sum of the ranks in any finite free resolution of $M$.\smallskip
 
A celebrated homological theorem that yields a tight lower bound on ranks of syzygies was proved by Evans and Griffith in \cite{EG1}. In its most general form their theorem states:

\begin{thm}[Syzygy Theorem]
 A finitely generated and finite projective dimension $k^{th}$ syzygy module over a local ring containing a field, if not free, has rank at least $k$.
\end{thm}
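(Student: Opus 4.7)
My plan is to prove the Syzygy Theorem via the Order Ideal Theorem. For an $R$-module $M$ and $e \in M$, write $O_M(e) = \{\phi(e) : \phi \in \Hom_R(M,R)\} \subseteq R$. The first step is a reduction: if $E$ is a $k^{th}$ syzygy of finite projective dimension over a local ring $R$ and some minimal generator $e \in E$ satisfies $\height O_E(e) \geq k$, then $\rank E \geq k$. Indeed, since $\rank E$ is well-defined and $E$ embeds in $R^{\rank E}$ after inverting a suitable non-zerodivisor, $O_E(e)$ is, up to radical, the ideal of coordinates of the image of $e$, and Krull's height theorem forces $\height O_E(e) \leq \rank E$.

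It therefore suffices to prove the Order Ideal Theorem: for a minimal generator $e$ of a $k^{th}$ syzygy $E$ of finite projective dimension, $\height O_E(e) \geq k$. I would proceed by induction on $k$, with the small cases handled directly using torsion-freeness and reflexivity of low syzygies of modules of finite projective dimension. For the inductive step, one uses the short exact sequence $0 \to E \to F_{k-1} \to \Syz_{k-1}(M) \to 0$ with $F_{k-1}$ free, together with dualization and a local cohomology comparison, to transfer information from level $k-1$ (where the inductive hypothesis applies) up to level $k$.

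The main obstacle, and the reason the theorem requires the equal characteristic hypothesis, is ensuring that the inductive step actually produces a regular sequence of length $k$ inside $O_E(e)$ rather than just an ideal of height $< k$. This is achieved by invoking a big Cohen-Macaulay module $B$ over $R$, whose existence for local rings containing a field was established by Hochster. Tensoring a finite free resolution of $M$ with $B$ converts the order ideal condition into a statement about regular sequences on $B$, and the Improved New Intersection Theorem then delivers the required height bound. The unresolved status of big Cohen-Macaulay modules in mixed characteristic is precisely what motivates the present paper's alternative approach through the hypersurface reduction $\bar{R} = R/(x^n)$, where one trades the global existence of such a module for a more delicate comparison of homological data on the two sides of the inclusion $\bar{R} \subset R$.
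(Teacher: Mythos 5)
The paper does not prove this theorem; it is stated as background and cited to Evans and Griffith \cite{EG1} (with subsequent generalizations to \cite{B}, \cite{HH}, etc.), so the comparison here is with the literature rather than with an argument in the paper itself. At the level of strategy your sketch is faithful to the original Evans--Griffith line of attack: establish the Order Ideal Theorem (that $\grade O_E(e)\geq k$ for every minimal generator $e$ of a $k^{th}$ syzygy of finite projective dimension) and then deduce the rank bound; and the ingredient that is genuinely tied to equal characteristic is the existence of (balanced) big Cohen--Macaulay modules, which feeds the Improved New Intersection Theorem and produces the required height estimate in the inductive step. That is also how the introduction of the paper frames the history, citing Ogoma, Dutta and Hochster for the link between the Syzygy Theorem, INI and the Canonical Element Conjecture.

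The place where your sketch has an actual gap is the reduction from the Order Ideal Theorem to the rank bound. You assert that since $E$ embeds in $R^{\rank E}$ after inverting one non-zerodivisor $c$, the order ideal $O_E(e)$ equals the coordinate ideal of the image of $e$ up to radical, whence $\height O_E(e)\leq \rank E$. Two problems: first, that inequality is false when $E$ is free (take $e$ a basis vector; then $O_E(e)=R$), so ``if not free'' is doing essential work you have not built into the argument; second, even for non-free $E$ the embedding only yields $c\cdot O_E(e)\subseteq (f_1(e),\dots,f_r(e))$ for the one fixed non-zerodivisor $c$, which bounds $O_E(e)$ by the quotient ideal $(f_1(e),\dots,f_r(e)):c$, not by the radical; you would need a genericity argument to keep $c$ out of the minimal primes. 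The standard way to close this, and the one this paper itself uses in Proposition~\ref{sufficient} and the Remark following it, is the peeling argument via Bruns' theorem on basic elements: if $\grade O_E(e)\geq k$ then the sequence $0\to R\xrightarrow{\,1\mapsto e\,}E\to E'\to 0$ splits in codimension $k-1$, so $E'$ is a $(k-1)^{st}$ syzygy of finite projective dimension with $\rank E'=\rank E-1$; if $\rank E<k$, induction forces $E'$ free, and then $\Ext^1_R(E',R)=0$ makes the whole sequence split, so $E$ is free. You should also be aware that the theorem as stated allows $R$ not to be a domain (Bruns' extension), while your embedding argument tacitly assumes one; the peeling argument circumvents this.
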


Evans and Griffith gave the original proof of the Syzygy Theorem in \cite{EG1} in the case of a local integral domain containing a field. Several different styles of proofs and generalizations have since appeared, the most notable of these being the characteristic $p$ proof of Hochster and Huneke \cite{HH} and the generalization by Bruns \cite{B} in which the minimal free complex is allowed some positive homology and the domain condition is dropped. Moreover, work of Ogoma \cite{O}, Dutta \cite{D} and Hochster \cite{H} showed that the Syzygy Theorem can be deduced from the Improved New Intersection Theorem and that the latter is equivalent to the Canonical Element Conjecture respectively. Further progress was made to prove the analogous result in the graded case in any characteristic in \cite{EG2} and for mixed characteristic regular local rings of dimension at most five in \cite{DG}. In the local mixed characteristic context the Syzygy Problem is open and we aim here at applying our methods to settle a few relevant cases. 

\begin{defn}
If $E$ is an $R$-module and $e \in E$,  there is an induced  $R$-homomorphism $e : \Hom_R(E,R)\ra R$ defined by $e(f) = f(e)$ the image of which is the order ideal
$$O_E(e) = \{f(e) | f \in \Hom_R(E,R)=E^*\}.$$
\end{defn}

The central idea in \cite{EG1}, \cite{EG2}, \cite{DG} and key to the proof of  the Syzygy Theorem was to establish first a stronger result, namely the Order Ideal Theorem for $k^{th}$ syzygies of finite projective dimension. For such a syzygy $E$, this theorem states that $\grade O_E(e)\geq k$, for every minimal generator $e\in E- mE$. For application to establishing the Syzygy Theorem, it was observed in \cite{EG2} in the homogeneous setting that having at least one minimal generator satisfying the grade inequality of the Order Ideal Theorem would suffice.  In this article, we introduce and study this lesser condition under the name of weak order ideal property for $k^{th}$ syzygies:

\begin{defn}
Let $R$ be a Noetherian local ring and let $E$ be a $k^{th}$ syzygy $R$-module. We say $E$ satisfies the weak order ideal property if there exists $e\in E- mE$ such that $\grade O_E(e)\geq k$ (or equivalently over an $S_k$ ring $R$, if $\height_R O_E(e)\geq k$).
\end{defn}

The paper is structured as follows: we begin by establishing several reductions that can be made for the rank of syzygies problem in mixed characteristic. In particular we reduce to syzygies of modules annihilated by powers of $p$ and establish, in the spirit of Bruns' three-generated ideal theorem, some special classes of three generated ideals that are of interest (Theorem \ref{reductionAnnp}). We further reduce under mild hypotheses to syzygies of finite projective dimension modules over a hypersurface ring $S=R/(p^n)$ (Proposition \ref{redfinitepd}).  In section three we prove that the order ideal property implies the Syzygy Theorem in this more subtle setting of modules over hypersurface rings  (Proposition \ref{sufficient}). In the fourth and fifth sections we develop the machinery for comparison of order ideals of syzygy modules over $R$ versus $R/(x)$ or $R/(x^n)$.  Our approach here is to study situations in which we can achieve the weak order ideal property for $R$-syzygies given that the Order Ideal Theorem holds over the respective hypersurface ring. Let $E$ represent a $k^{th}$ syzygy module over $R$ and let $E'$ represent the same over  $R/(x)$. A useful comparison arises whenever there is a homomorphism $E\lra E'$ that remains nontrivial upon tensoring with the residue field. The main results in Theorems \ref{comp} and \ref{2ndComp} describe two situations when such a conclusion can be achieved. The sixth section contains results on stronger bounds on ranks of syzygies which can be deduced for modules annihilated by $p$ and also under the incomparable hypothesis that the module being resolved is weakly liftable in the sense of Auslander, Ding and Solberg \cite{ADS}.


A characterization of syzygies can be given in terms of Serre's property $S_k$: we say that an $R$-module $M$ satisfies the Serre condition $S_k$ if for each prime ideal $\frak{p}\in \Spec R, \depth_{R_{\frak{p}} }M_{\frak{p}}\geq min(k, \dim R_{ \frak{p}}).$ Over $S_k$ rings, $k^{th}$ syzygies can simply be characterized as modules satisfying property $S_k$. Although we can state many of our results over $S_k$ domains, with respect to the order ideal techniques employed in \cite{EG}, \cite{EG2} only the unramified regular local case is most likely to yield positive results. The final section contains details on the importance of understanding the case where  $R$ is assumed to be regular and unramified.

\section{Reductions}
\subsection{Reduction to modules annihilated by powers of $p$}

Throughout this section $R$ is a  local unique factorization domain of mixed characteristic $p$. In this context, any possible counterexample to the Syzygy Conjecture can only occur for $k^{th}$ syzygies $E$ with $k\geq 3$, since second syzygies of rank one must be  isomorphic to $R$.  Therefore we may deform the initial two terms of any free  resolution as long as the third and higher syzygies remain the same. Moreover, an affirmative answer to the Syzygy Theorem in equal characteristic implies that $E[p^{-1}]$ will be $R[p^{-1}]$ projective for any counterexample $E$.

We recall the universal pushforward construction (page 49 in \cite{EG}) 

\begin{prop}\label{universalpushforward}
 Let $E$ be a $k^{th}$ syzygy over $R$. Then one can construct an exact sequence of length $k$ of free $R$-modules  $$0\ra E\ra R^{n_k}\ra \ldots \ra R^{n_1}$$ called the universal pushforward of $E$. 
If we further assume that $E[p^{-1}]$ is $R[p^{-1}]$-projective, then all the syzygies of the universal pushforward sequence become projective upon inverting $p$.
\end{prop}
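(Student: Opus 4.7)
The plan is to build the sequence by iteration, starting from $E$ and using at each step the canonical biduality map together with a minimal presentation of the dual. Set $E^{(k)}=E$. Since $E$ is a $k$-th syzygy with $k\geq 1$ over a UFD it is torsion-free, and for $k\geq 2$ it is reflexive, so in any case the natural map $E\to E^{**}$ is injective. Pick a minimal surjection $R^{n_k}\twoheadrightarrow \Hom_R(E^{(k)},R)$; dualizing yields $E^{(k)**}\hookrightarrow R^{n_k}$, and precomposing with the biduality map produces an inclusion $E^{(k)}\hookrightarrow R^{n_k}$ whose cokernel we call $E^{(k-1)}$. One then verifies that $E^{(k-1)}$ is a $(k-1)$-st syzygy via a depth chase through $0\to E^{(k)}\to R^{n_k}\to E^{(k-1)}\to 0$, using the Serre-type characterization of syzygies recalled in the introduction. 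Iterating produces short exact sequences $0\to E^{(j)}\to R^{n_j}\to E^{(j-1)}\to 0$ for $j=k,k-1,\ldots,1$; splicing them yields the desired exact sequence $0\to E\to R^{n_k}\to R^{n_{k-1}}\to\cdots\to R^{n_1}$.

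\textbf{Preservation of projectivity after inverting $p$.} For the second assertion I would argue by descending induction on $j$, showing each $E^{(j)}[p^{-1}]$ is $R[p^{-1}]$-projective, with the base case $j=k$ being the hypothesis. The key point is that the whole construction commutes with the flat localization $R\to R[p^{-1}]$: both formation of $\Hom_R(-,R)$ and the choice of a generating surjection pass to the localization. Granted $E^{(j)}[p^{-1}]$ is projective, its dual $\Hom_{R[p^{-1}]}(E^{(j)}[p^{-1}],R[p^{-1}])$ is projective as well, so the localized surjection $R[p^{-1}]^{n_j}\twoheadrightarrow \Hom_{R[p^{-1}]}(E^{(j)}[p^{-1}],R[p^{-1}])$ splits. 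Dualizing and using reflexivity of projectives, the inclusion $E^{(j)}[p^{-1}]\hookrightarrow R[p^{-1}]^{n_j}$ is a split monomorphism, so $E^{(j-1)}[p^{-1}]$ is a direct summand of a free $R[p^{-1}]$-module and hence projective.

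\textbf{Expected obstacle.} I anticipate that the technical heart of the proof is not the projectivity-preservation step, which is largely formal, but rather the verification that each successive cokernel $E^{(j-1)}$ really is a $(j-1)$-st syzygy over $R$, so that biduality can be applied again at the next stage. This rests on careful bookkeeping of depth across the short exact sequences and depends on assuming enough regularity of $R$ (for instance the $S_k$ condition) to invoke the Serre-type characterization of $k$-th syzygies. Once this is in place, splicing and localization do the rest.
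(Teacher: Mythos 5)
Your construction is the same as the paper's: the map $E\hookrightarrow R^{n_k}$ you describe (compose biduality with the dual of a minimal surjection $R^{n_k}\twoheadrightarrow E^*$) is exactly the map $m\mapsto(f_1(m),\ldots,f_{n_k}(m))$ that the paper obtains from a generating set $f_1,\ldots,f_{n_k}$ of $E^*$, and both proceed by iteration. For the second assertion you are actually a bit more explicit than the paper, which simply asserts that $0\to E[p^{-1}]\to R[p^{-1}]^{n_k}\to C[p^{-1}]\to 0$ becomes split once $E[p^{-1}]$ is projective; your version correctly pins this down as splitting of the dual surjection plus reflexivity of projectives (and the needed commutation of $\Hom$ with the flat localization is fine because $E$ is finitely presented). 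One caveat about your ``depth chase'' for showing $E^{(k-1)}$ is a $(k-1)$-st syzygy: at a prime $P$ with $\dim R_P<k$, the depth lemma applied to $0\to E^{(k)}_P\to R_P^{n_k}\to E^{(k-1)}_P\to 0$ only gives $\depth E^{(k-1)}_P\geq \dim R_P-1$, one short of $S_{k-1}$. The missing ingredient is precisely the dual exactness of the pushforward sequence, which guarantees that the sequence splits locally wherever $E$ is free, so that $E^{(k-1)}_P$ is then free (hence maximal Cohen--Macaulay) at those primes. You built dual exactness into your construction but should invoke it here rather than rely on depth accounting alone; the paper sidesteps this by citing the Evans--Griffith pushforward directly.
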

\begin{proof}
 Let ${f_1,\ldots ,f_{n_k}}$ generate $E^*$ and map $E\stackrel{u}\ra R^{n_k}$ via setting $u(m)=(f_1(m),\ldots,f_{n_k}(m))$. If $E$ is a $k^{th}$ syzygy ($k\geq 1$), this is a monomorphism which gives rise to a dual exact sequence 
$0\ra E\ra R^{n_k}\ra C\ra 0$, with $C$ a $(k-1)^{st}$ syzygy. If we further assume that $E[p^{-1}]$ is $R[p^{-1}]$-projective, this short exact sequence must become locally split upon inverting $p$, yielding that $C[p^{-1}]$ is also $R[p^{-1}]$-projective. Now  we may repeat the process as long as $C$ is at least a first syzygy (i.e. $k$ times) to obtain a long exact sequence of length $k$ in which all the syzygies become projective upon inverting $p$.
\end{proof}

We further recall the statement of the Bourbaki Theorem (Theorem 2.14 in \cite{EG}).
\begin{thm}[Bourbaki]\label{Bourbaki}
 Let $R$ be a normal domain and let $N$ be a finitely generated torsion-free $R$-module. Then there exists a free submodule $F$ of $N$ such that $N/F$ is isomorphic to an ideal.
\end{thm}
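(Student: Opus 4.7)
The plan is to argue by induction on the rank $r$ of $N$, producing at each stage an exact sequence $0 \ra F \ra N \ra I \ra 0$ with $F$ free of rank $r-1$ and $I$ isomorphic to a nonzero ideal of $R$. The base case $r = 1$ is direct: since $N$ is a torsion-free rank one module, the canonical inclusion $N \hookrightarrow N \otimes_R K$ identifies $N$ with a rank one $R$-submodule of the fraction field $K$; after multiplying by a common denominator the image lies inside $R$ as a nonzero ideal, and one takes $F = 0$.

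For the inductive step $r \geq 2$, the strategy is to find an element $n \in N$ such that $N/Rn$ is torsion-free of rank $r-1$. Granting such an $n$, the inductive hypothesis applied to $N/Rn$ yields a free submodule $\overline{F} \subseteq N/Rn$ with $(N/Rn)/\overline{F}$ isomorphic to an ideal $I$. Letting $F \subseteq N$ denote the preimage of $\overline{F}$ under the quotient $N \to N/Rn$, the short exact sequence $0 \ra Rn \ra F \ra \overline{F} \ra 0$ splits since $\overline{F}$ is free. As $N$ is torsion-free, $Rn \cong R$, so $F \cong R \oplus \overline{F}$ is free, and $N/F \cong (N/Rn)/\overline{F} \cong I$.

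The central obstacle is producing the element $n$, and this is where normality of $R$ is essential. For every height one prime $\mathfrak{p}$ of $R$, the localization $R_\mathfrak{p}$ is a discrete valuation ring and $N_\mathfrak{p}$ is free of rank $r$ over $R_\mathfrak{p}$. The torsion-freeness of $N/Rn$ is equivalent to the requirement that $n$ be part of a free $R_\mathfrak{p}$-basis of $N_\mathfrak{p}$ for every height one $\mathfrak{p}$, which in turn is equivalent to the basicity condition $n \notin \mathfrak{p} N_\mathfrak{p}$ at every such $\mathfrak{p}$. To establish existence, one starts from any nonzero $n_0 \in N$ and observes that the height one primes at which $n_0$ fails to be basic form a finite set $\{\mathfrak{p}_1, \ldots, \mathfrak{p}_t\}$: these are precisely the height one primes in the support of the torsion submodule of $N/Rn_0$, a finitely generated torsion $R$-module annihilated by some nonzero $a \in R$, whose support therefore consists of the finitely many minimal primes of $(a)$. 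Using the incomparability of distinct height one primes to produce, via standard prime avoidance, elements $a_i \in \bigcap_{j \neq i} \mathfrak{p}_j \setminus \mathfrak{p}_i$ together with elements $v_i \in N$ basic at $\mathfrak{p}_i$, one assembles a correction $v = \sum a_i v_i$ and modifies $n_0$ to $n = n_0 + v$; a further local adjustment ensures the modification introduces no new bad primes. This basic element construction, an instance of the Eisenbud-Evans-Bruns basic element theorem, is the technical heart of the argument.
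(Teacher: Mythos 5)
The paper itself does not prove this theorem; it cites it as Theorem 2.14 of Evans and Griffith's \emph{Syzygies}, so there is no in-paper argument to compare against. Your overall strategy---induction on rank, reducing to finding an element $n\in N$ that is basic at every height-one prime so that $N/Rn$ is torsion-free of rank $r-1$---is the standard route, and the base case, the inductive bookkeeping (preimage of $\overline{F}$, splitting the extension by the free module $\overline{F}$), and the characterization of torsion-freeness of $N/Rn$ via basicity of $n$ at height-one primes are all sound. It is worth noting explicitly that the reverse direction of that characterization is where normality is really used: if $n$ is basic at every height-one $\mathfrak{p}$ then $(N\cap Kn)_\mathfrak{p}=(Rn)_\mathfrak{p}$ at all such $\mathfrak{p}$, and $N\cap Kn$ is a rank-one torsion-free module containing $Rn$, so identifying it with a fractional ideal $J\supseteq R$ and using $R=\bigcap_{\height\mathfrak{p}=1}R_\mathfrak{p}$ forces $J=R$; without the $S_2$ part of normality, the torsion of $N/Rn$ could sit entirely in codimension $\geq 2$ and this local criterion would fail.

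The gap is in the existence of $n$. Your correction $n=n_0+v$ with $v=\sum a_i v_i$ and $a_i\in\bigcap_{j\neq i}\mathfrak{p}_j\setminus\mathfrak{p}_i$ does make $n$ basic at the old bad primes $\mathfrak{p}_1,\dots,\mathfrak{p}_t$, but nothing controls basicity at the remaining height-one primes: the perturbation can destroy basicity at arbitrarily many new $\mathfrak{q}$ where $n_0$ was fine, the new bad set has no reason to be smaller than the old one, and so a naive iteration of this step need not terminate. The clause ``a further local adjustment ensures the modification introduces no new bad primes'' is precisely where the substance lies and it is not supplied; the construction as described is not actually an instance of the basic element theorem but an incomplete attempt to prove it. A standard way to close this is to localize at $S=R\setminus\bigcup_i\mathfrak{p}_i$, note that $R_S$ is a semilocal Dedekind domain so $N_S$ is free, take a basis vector of $N_S$ lying in $N$, and then carefully control its bad set relative to a fixed free approximation $Re_1+\cdots+Re_r\subseteq N$; alternatively, invoke a precise statement of the Eisenbud--Evans/Bruns basic element theorem (for instance Corollary 2.6 or Theorem 2.11 in Evans--Griffith's book) whose hypotheses $N$ satisfies because it has rank $r\geq 2$, and in particular local minimal number of generators $\geq 2$, at every height-one prime. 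As written, you have correctly reduced Bourbaki's theorem to a nontrivial basic element statement but have not proved that statement, so the proposal is incomplete rather than wrong.
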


We begin with a result allowing us to remove certain associated primes.

\begin{prop}\label{propremove}
 Let $R$ be a local unique factorization domain of  mixed characteristic $p$ such that $R/(p)$ is still a unique factorization domain and suppose $E$ is a $k^{th}$ syzygy of an $R$-module $M$ where $k\geq3$. If $P\in Ass_R M$ with $p\in P$ and $\height P\leq 2$, then $E$ is also a $k^{th}$ syzygy for $M'$ where 
$$0\ra R/P\ra M\ra M'\ra 0 \mbox{ is exact.}$$
\end{prop}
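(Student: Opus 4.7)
The plan is to exploit that, under the hypotheses, $\pd_R R/P\leq 2$, and then use a mapping cone construction to convert the given free resolution of $M$ into one of $M'$ whose $k$-th syzygy (for $k\geq 3$) can be identified with the given $E$.

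First, I would verify $\pd_R R/P\leq 2$. If $\height P=1$, then $P$ is principal in the UFD $R$, so $R/P$ has a length-one free resolution. If $\height P=2$, then $P\supsetneq(p)$ and $P/(p)$ is a nonzero prime of $R/(p)$; its height in $R/(p)$ is at most $\height P-\height(p)=1$ and is exactly one since it is nonzero. Since $R/(p)$ is a UFD by hypothesis, $P/(p)=(\bar y)$ is principal, so $P=(p,y)$ with $p,y$ a regular sequence on $R$. The Koszul complex on $p,y$ provides a free resolution $0\ra R\ra R^2\ra R\ra R/P\ra 0$, confirming $\pd_R R/P=2$.

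Now I would lift the inclusion $R/P\hookrightarrow M$ to a chain map $\varphi:P_\bullet\ra F_\bullet$ between a resolution $P_\bullet$ of $R/P$ of length at most two and the given resolution $F_\bullet$ of $M$ with $E=\Ker(F_{k-1}\to F_{k-2})$, then form the mapping cone $C(\varphi)_\bullet$ with $C(\varphi)_n=P_{n-1}\oplus F_n$. The short exact sequence of complexes $0\ra F_\bullet\ra C(\varphi)_\bullet\ra P_\bullet[1]\ra 0$ together with the injectivity of $R/P\to M$ shows, via the long exact sequence in homology, that $C(\varphi)_\bullet$ is a free resolution of $M'$. Since $P_i=0$ for $i\geq 3$, one has $C(\varphi)_n=F_n$ for $n\geq 4$ and the differentials $C(\varphi)_{k-1}\to C(\varphi)_{k-2}$ agree with those of $F_\bullet$ whenever $k\geq 5$; for such $k$ the $k$-th syzygy of $M'$ in this resolution is literally $\Ker(F_{k-1}\to F_{k-2})=E$. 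For $k=3$ and $k=4$ one checks directly, using the chain map identity $\varphi_{i-1}\circ d^P_i=d^F_i\circ\varphi_i$ together with the injectivity of $d^P_2$, that the kernel of the relevant cone differential is naturally isomorphic to $E$; for instance, when $k=3$, any $(p,f)\in\Ker(C(\varphi)_2\to C(\varphi)_1)$ satisfies $p=d^P_2(q)$ for a unique $q\in P_2$, and $(d^P_2(q),f)\mapsto f+\varphi_2(q)$ furnishes an isomorphism onto $\Ker(d^F_2)=E$.

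The main obstacle is the identification of syzygies in the low cases $k=3,4$, where the nontrivial tail of $P_\bullet$ still contributes to the cone and one must invoke the chain map compatibility to extract $E$ from the kernel. The hypothesis $k\geq 3$ is essential: were $k\leq 2$, too much of the resolution of $R/P$ would survive in the cone for $E$ to appear as a $k$-th syzygy of $M'$.
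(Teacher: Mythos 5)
Your opening reduction, showing $\pd_R R/P\leq 2$ via the UFD hypotheses (principal if $\height P=1$; Koszul complex on the regular sequence $p,y$ if $\height P=2$, where $\bar y$ generates the height-one prime $P/(p)$ of the UFD $R/(p)$), is correct and matches the implicit content of the paper's remark that $P=(p,q)$.

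Your construction, however, goes in the opposite direction from the paper's and hits a genuine snag at $k=3$. The paper forms the horseshoe resolution $H_\bullet$ of the \emph{middle} term $M$ from the Koszul resolution $P_\bullet$ of $R/P$ and a minimal resolution $G_\bullet$ of $M'$, so that $H_n=P_n\oplus G_n$; since $P_n=0$ for $n\geq 3$, the tail of $H_\bullet$ agrees with the tail of $G_\bullet$ degree by degree and the third and higher syzygies of $M$ and $M'$ coincide directly. Your mapping cone $C(\varphi)_\bullet$ resolves the \emph{cokernel} $M'$ from $P_\bullet$ and a resolution $F_\bullet$ of $M$, but the cone has the shift $C_n=P_{n-1}\oplus F_n$, so $P_2$ survives into $C_3$. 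Concretely, your claimed isomorphism at $k=3$ is false: the map
$(d^P_2(q),f)\mapsto f+\varphi_2(q)$
from $\Ker(C_2\to C_1)$ onto $E=\Ker(d^F_2)$ is surjective but not injective. Indeed $(d^P_2(q),-\varphi_2(q))$ lies in $\Ker(C_2\to C_1)$ for every $q\in P_2$ (check: $d^P_1 d^P_2=0$ and $\varphi_1 d^P_2-d^F_2\varphi_2=0$ by the chain-map identity), and each such element is sent to $0$; so the kernel of your map is isomorphic to $P_2\cong R$ and one has $\Ker(C_2\to C_1)\cong P_2\oplus E$, not $E$. Your identification at $k=4$ and $k\geq 5$ is fine because there $P_{k-2}=0$. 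Thus as written the proof establishes that $E\oplus R$ is a third syzygy of $M'$, not that $E$ is. This gap is repairable over the local ring $R$: since $E\oplus R$ is a $3$rd syzygy of $M'$, the minimal $3$rd syzygy $Z$ of $M'$ satisfies $E\oplus R\cong Z\oplus R^a$, and a Krull--Schmidt argument (assuming $E$ is taken from a minimal resolution of $M$, hence free-summand-free) forces $a=1$ and $E\cong Z$. But you must supply this extra step, or else switch to the paper's horseshoe construction, which sidesteps the degree shift entirely and gives the identification of syzygies for $k\geq 3$ without any extraneous free summand.
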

\begin{proof}
 We shall discuss only the height 2 case since height one requires similar arguments. We note that $P$ is necessarily of the form $P=(p,q)$ where the class of $q$ represents a prime in $\bar{R}$. We may view a  free resolution of $M$ being formed  via the horseshoe lemma as a direct sum of the minimal free resolution for $M'$ and the length 2 resolution for $R/P$. Therefore the third and higher syzygies of $M$ are preserved in the resolution of $M'$.

\end{proof}

\begin{thm}\label{reductionAnnp}
 Let $R$ be a  local normal unique factorization domain of mixed characteristic $p$ such that $R/(p)$ is still a unique factorization domain. Suppose $E$ is a $k^{th}$ syzygy ($k\geq 3$) in a finite free resolution $\mathbf{F}.\ra M$  and $E[p^{-1}]$ is a projective $R[p^{-1}]$-module. Then we may assume that $M$ is any one of the following types:
\begin{enumerate}
 \item the module $M$ is annihilated by $p^s$ for some $s>0$ and $\height_R(ann_R M)\geq 3$;
 \item the module $M\simeq R/I$ where the ideal $I$ has the property that for $P\in Ass_R(R/I)$ and $p\notin P$, $ht P\leq 2$.
\item If furthermore $\Syz_2 M=N$ has the properties $\rank_R N=2$ and $N[p^{-1}]$ is $R[p^{-1}]$-free, then one may take $M$ to be of the form $M\simeq R/(p^s,a,b)$ for some $s>0$.
\end{enumerate}
\end{thm}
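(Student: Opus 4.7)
My plan is to establish the three reductions sequentially, using three main tools: the universal pushforward from Proposition \ref{universalpushforward}, which preserves $E$ as the $k^{th}$ syzygy while forcing later cokernels to become $R[p^{-1}]$-projective; the Bourbaki theorem (Theorem \ref{Bourbaki}), which trades a free submodule of a torsion-free module over the normal domain $R$ for an ideal quotient; and Proposition \ref{propremove}, which removes associated primes of height at most $2$ containing $p$ without disturbing $E$.

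For (1), first apply the universal pushforward so that $M[p^{-1}]$ is $R[p^{-1}]$-projective. Since a projective module over the Noetherian domain $R[p^{-1}]$ is torsion-free, every associated prime of $M$ not containing $p$ must contract to $(0)$. Let $T$ denote the $p^{\infty}$-torsion submodule of $M$; then $M/T$ is torsion-free over the normal domain $R$, and Bourbaki supplies a free submodule $F \subseteq M/T$ with $(M/T)/F$ a rank-one ideal. Lifting $F$ to a free submodule of $M$ and using Schanuel-type manipulations on the resolution (valid because $k \geq 3$, so free summands in low positions leave $E$ unchanged modulo free summands), one absorbs the torsion-free part into the free terms of the resolution, replacing $M$ by a $p^s$-torsion module. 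Finally, iterating Proposition \ref{propremove} kills all remaining associated primes of height at most $2$ containing $p$, giving $\height_R(\ann_R M) \geq 3$.

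For (2), a parallel use of the universal pushforward followed by iterated Bourbaki passages successively absorbs free submodules until $M$ becomes a cyclic module $R/I$. The projectivity of $M[p^{-1}]$ then constrains the associated primes of $R/I$: a cyclic projective module over the domain $R[p^{-1}]$ is either zero or $R[p^{-1}]$, forcing every associated prime of $R/I$ not containing $p$ to have height at most $2$ (in fact height $0$ in the generic case).

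For (3), start from the combined conclusions of (1) and (2), giving $M = R/I$ cyclic with $p^s \in I$ for some $s>0$. In a minimal free presentation $R^{\mu} \xrightarrow{\phi} R \to R/I \to 0$, the second syzygy $N = \ker \phi$ has generic rank $\mu - 1$, since $\im \phi = I$ is a nonzero ideal in the domain $R$. The hypothesis $\rank_R N = 2$ then forces $\mu = 3$, so $I$ is minimally $3$-generated; choosing $s$ minimal and adjusting generators gives $I = (p^s, a, b)$. The extra hypothesis that $N[p^{-1}]$ is $R[p^{-1}]$-free keeps the rank-$2$ count compatible with the Bourbaki reductions. The main obstacle I anticipate is the torsion-free absorption in (1): removing the $(0)$-associated prime of $M$ while preserving $E = \Syz_k(M)$ modulo free summands requires careful bookkeeping of the horseshoe-lemma syzygy exact sequence associated to $0 \to T \to M \to M/T \to 0$ to ensure that higher syzygies truly survive intact up to free summands.
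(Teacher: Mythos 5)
Your high-level plan (universal pushforward, Bourbaki, Proposition \ref{propremove}) is the right toolkit, but all three parts have genuine gaps and part (3) in particular misses the key mechanism.

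In part (1), the paper does \emph{not} invoke Bourbaki. After the pushforward makes $M[p^{-1}]$ projective, it quotes that a finite projective-dimension projective $R[p^{-1}]$-module is \emph{stably free} and augments $M$ by a free $R$-module so that $M[p^{-1}]$ is actually $R[p^{-1}]$-\emph{free}; then a basis of $M[p^{-1}]$, cleared of denominators, spans a free submodule $F\subseteq M$ with $M/F$ already $p^s$-torsion. Your route via Bourbaki on $M/T$ produces a free $F$ with $(M/T)/F$ a rank-one \emph{ideal}, which is not generally free (think of a nonprincipal ideal whose localization at $p$ is trivial), so you cannot simply ``absorb the torsion-free part into the free terms'' — some nontrivial rank-one piece survives and the quotient is not $p$-torsion. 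The stable-freeness step is exactly what replaces that ideal by a free module.

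In part (2), the claim that $(R/I)[p^{-1}]$ is projective (hence $0$ or $R[p^{-1}]$) is incorrect and would prove far too much (it would force $I=0$ or all associated primes to contain $p$). What the universal pushforward plus Bourbaki actually yields is a sequence $0\to G\to Z\to I\to 0$ with $G$ free and $Z[p^{-1}]$ projective, from which one gets $\pd_{R[p^{-1}]}(R/I)[p^{-1}]\leq 2$; the height bound on associated primes not containing $p$ then comes from Auslander--Buchsbaum at those localizations. The distinction between projectivity and projective dimension $\leq 2$ is the whole content here.

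In part (3), ``choosing $s$ minimal and adjusting generators gives $I=(p^s,a,b)$'' does not follow: a minimally three-generated ideal containing $p^s$ need not admit $p^s$ as a minimal generator (e.g.\ $p^s$ could lie in $m\cdot I$). The paper instead works on the dual side. From $N[p^{-1}]$ free one splits $N^*\cong R\oplus J$ with $J$ a height-two ideal containing a power of $p$; picking an $R$-sequence $(p^s,a)\subseteq J$ gives generators $e_1,e_2,e_3\in N^*$ with a single Koszul-type relation, and the submodule $W=Re_1+Re_2+Re_3$ satisfies $W^*\cong N^{**}\cong N$ because $N^*/W\cong J/(a,p^s)$ has no height-one associated primes. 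Resolving $W$ by $0\to R\to R^3\to W\to 0$ and dualizing yields $0\to N\to R^3\to R\to R/(a,b,p^s)\to 0$, realizing $N$ as a second syzygy of $R/(p^s,a,b)$. That dualization and reflexivity argument is what actually produces the special three-generated ideal; your count on $\mu(I)$ only recovers that $I$ is three-generated, not that it has the required shape.
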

\begin{proof}
(1) Applying the universal pushforward construction (Proposition \ref{universalpushforward}) and viewing the module $M$ being resolved as the cokerel of the last map of the pushforward complex, we obtain  that $M[p^{-1}]$ is a projective $R[p^{-1}]$-module. In fact, since it has a finite free resolution, $M[p^{-1}]$ is stably free (Proposition 19.16 in \cite{Eis}). Therefore one may augment $M$ by a suitable free  $R$-module (resulting in a corresponding augmentation of  $\mathbf{F}.$) so that $M[p^{-1}]$ is $R[p^{-1}]$-free.  It follows that there exists  a free submodule $F$ of $M$  such that $T=M/F$ is annihilated by $p^s$ for some $s>0$. Next we may replace $M$ by $T$ while preserving all the $k^{th}$ syzygy modules, for $k\geq 3$ and finally we may remove all height one and two associated primes of $\ann_R T$ using the principle embodied in Proposition \ref{propremove}.

(2) In this instance we run the universal pushforward construction until obtaining a first syzygy module $Z$. Here we employ the Bourbaki Theorem (\ref{Bourbaki}) to obtain a short exact sequence
$$0\ra G\ra Z\ra I\ra 0$$ where $G$ is free and $I$ is an ideal having $\height_R I\geq 2$. It follows that $pd_{R[p^{-1}]}(R/I)[p^{-1}]\leq 2$, since $Z[p^{-1}]$ is $R[p^{-1}]$ projective. Thus if $P\in \ass_R R/I$ and $p\not \in P$ then $\dim R_P \leq 2$ and consequently $ht (P)\leq 2$.

(3) Assuming now  $N$ is the second syzygy of $M$ and $N[p^{-1}]$ is $R[p^{-1}]$-free, one sees that $N^*$ contains an element $e_1$ such that $N^*/e_1R$ is $R$-torsion free and the following sequence which maps $1\in R \mapsto e_1\in N^*$ is split exact: 
$$0\ra R\ra N^* \ra J \ra 0$$

Since $J[p^{-1}]\simeq R[p^{-1}]$, one has that $J$ is isomorphic to a height two ideal in $R$ that contains a power of $p$ so  $(p^s,a)\subseteq I$, where $(p^s,a)$ is an $R$-sequence. If we let $e_2, e_3\in N^*$ correspond modulo $e_1R$ to $p^s$ and $-a$ in $I$ respectively, then one obtains a relation in $N^*$  of the form
$$\lambda_1e_1+\lambda_2e_2+\lambda_3e_3=0$$
where $\lambda_2=a,\lambda_3=p^s$ and $\lambda_1=b\in R$. We set $W=Re_1+Re_2+Re_3\subseteq N^*$ and observe that $N^*/W \simeq I/(a,p^s)$, hence $\ass_R N^*/W$ contains no primes of height one. It follows that $W^*\simeq N^{**}\simeq N$ and that $W$ has a free resolution
\begin{equation}\label{res}
 0\ra R\ra R^3 \ra W \ra 0
\end{equation}
in which $1\in R$ is sent to an element of $R^3$ of the form $\langle b,a,p^s \rangle$. Thus we may dualize the short exact sequence (\ref{res}) and obtain
$$0\ra W^*\ra R^3\ra R \ra R/(a,b,p^s)\ra 0$$
where $\Ext_R^1(W,R)\simeq R/(a,b,p^s)$. Hence we may continue the free resolution from the second syzygy ($N=W^*$) onward as desired.
\end{proof}

\subsection{Reduction to finite projective dimension over a hypersurface ring}
\smallskip
Working in slightly greater generality  we let $x$ be a non-zerodivisor on $R$ and we let $S=R/(x^n)$ and $T$ be an $R$ module such that $x^nT=0$. Clearly one may view the $R$-module $T$ as an $S=R/(x^n)$-module. The first goal of this section is to show that  we may assume $pd_S T<\infty$ for the purpose of examining the $R$-syzygies of $T$.


The main tool we use in the following is the Auslander-Bridger approximation theorem:

\begin{thm}(Auslander-Bridger, Corollary 5.3 in \cite{EG})\label{AusBr}
 Let $R$ be a Gorenstein local ring and let $M$ be a finitely generated reflexive $R$-module. Then there is a free $R$-module $L$ of finite rank and a short exact sequence
$$0\lra U\lra L\bigoplus M \lra M' \lra 0$$
satisfying
\begin{enumerate}
 \item the sequence is dual exact;
\item $U$ is maximal Cohen-Macaulay;
\item $M'$ has finite projective dimension ;
\item the natural map $Ext^i(M',R)\ra Ext^i(M,R)$ is an isomorphism for $i\geq 1$.
\end{enumerate}
\end{thm}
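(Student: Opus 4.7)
The plan is to invoke the Cohen--Macaulay approximation machinery of Auslander and Bridger over Gorenstein local rings. The key technical input is that, over $R$ Gorenstein of dimension $d$, every $d$-th syzygy of a finitely generated module is maximal Cohen--Macaulay (MCM); moreover, over a Gorenstein ring MCM modules are totally reflexive, meaning they coincide with their double dual and satisfy $\Ext^i(-, R) = 0$ for every $i \geq 1$. It is this vanishing that powers all four conclusions.

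First I would take a minimal free resolution $F_\bullet \to M^* \to 0$ of the dual module and consider its $d$-th syzygy $N = \Syz_d(M^*)$, which is MCM. Applying $\Hom(-, R)$ to the truncated exact sequence $0 \to N \to F_{d-1} \to \ldots \to F_0 \to M^* \to 0$ produces a finite complex
\begin{equation*}
0 \to M \to F_0^* \to F_1^* \to \ldots \to F_{d-1}^* \to N^* \to 0,
\end{equation*}
whose left-hand term is $M^{**} = M$ by the reflexivity hypothesis. This finite complex is the skeleton of the construction.

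Next, I would promote this skeleton to the desired short exact sequence: choose a free cover $L \twoheadrightarrow N^*$ and splice it in, replacing $N^*$ at the right end by $L$ to obtain a finite complex of free modules. Its cokernel is the required $M'$, which automatically has finite projective dimension (giving (3)). Repackaging the splicing data yields a short exact sequence $0 \to U \to L \oplus M \to M' \to 0$, where the natural map $M \to M'$ is inherited from the inclusion $M \hookrightarrow F_0^*$ followed by projection, and the kernel $U$ is built as an extension of the MCM module $N$ by free summands, so it too is MCM (giving (2)). Properties (1) and (4) are then simultaneous consequences of the long exact $\Ext$ sequence: because $\Ext^i(L, R) = 0$ and $\Ext^i(U, R) = 0$ for $i \geq 1$, the dualized sequence becomes short exact, and the map $\Ext^i(M', R) \to \Ext^i(L \oplus M, R) = \Ext^i(M, R)$ induced by $M \hookrightarrow L \oplus M \twoheadrightarrow M'$ is an isomorphism for every $i \geq 1$.

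The main obstacle I anticipate is the bookkeeping required to see that the kernel $U$ produced by the splicing is genuinely MCM, and that the connecting homomorphism $U^* \to \Ext^1(M', R)$ really vanishes so that dual exactness (1) holds on the nose rather than merely left-exactness. Both are most cleanly verified by re-expressing the construction through the Auslander transpose $\Tr$, so that $U$ arises as a totally reflexive module by functoriality; this is the path taken in \cite{EG}, and I would follow it to make the construction canonical and to avoid ad hoc chain-level verifications.
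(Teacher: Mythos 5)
The paper does not supply a proof of this result: it is quoted verbatim from Evans and Griffith's monograph \emph{Syzygies} (Corollary 5.3 there), so there is no in-paper argument to compare against. Judging the attempt on its own, there is a real gap at the center of the construction: the ``skeleton'' you build is not a resolution. Writing $N = \Syz_d(M^*)$ and dualizing $0 \to N \to F_{d-1} \to \cdots \to F_0 \to M^* \to 0$ yields a complex
\begin{equation*}
0 \to M^{**} \to F_0^* \to F_1^* \to \cdots \to F_{d-1}^* \to N^* \to 0
\end{equation*}
whose cohomology at $F_i^*$ for $1 \le i \le d-1$ is $\Ext^i(M^*,R)$ and at $N^*$ is $\Ext^d(M^*,R)$. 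These vanish for all $i\ge 1$ exactly when $M^*$, equivalently $M$, is already maximal Cohen--Macaulay --- the case where the theorem is vacuous (take $U=M$, $L=0$, $M'=0$). In every nontrivial case the skeleton has nonzero cohomology in the middle, so ``replacing $N^*$ at the right end by a free cover $L$ and taking the cokernel'' does not define a module $M'$ of finite projective dimension, and the claimed short exact sequence $0\to U\to L\oplus M\to M'\to 0$ is never produced. In particular the assertion that $U$ ``is built as an extension of the MCM module $N$ by free summands'' is not something the construction, as written, delivers.

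You also flag, correctly, that dual exactness (1) is extra content: even granting (2) and (3), the long exact $\Ext$-sequence only gives that $\Ext^i(M',R)\to\Ext^i(M,R)$ is surjective for $i\ge 1$ and bijective for $i\ge 2$; injectivity at $i=1$ is equivalent to the vanishing of the connecting map $U^*\to\Ext^1(M',R)$, which is precisely condition (1). Deferring both the well-definedness of the splicing and the verification of (1) to ``re-expressing the construction through the Auslander transpose $\Tr$'' identifies the correct machinery --- the actual argument runs through $\Tr M$, the four-term sequence $0\to\Ext^1(\Tr M,R)\to M\to M^{**}\to\Ext^2(\Tr M,R)\to 0$ (which is where reflexivity of $M$ is used), and high syzygies of $\Tr M$ --- but as submitted the proposal substitutes the citation for the proof at precisely the step where something nontrivial has to be established.
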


In the following we employ the Auslander-Bridger approximation theorem to reduce to the case of finite resolutions over a hypersurface ring $S$.

\begin{prop}
 Let $R$ be a local ring and $S=R/(x^n)$. If $0\ra M'\ra M\ra M" \ra 0$ is an exact sequence of reflexive $S$-modules that is in addition dual exact, then there is a short exact sequence of $R$-syzygy modules for $M',M$ and $M"$ respectively that is in turn dual exact.
\end{prop}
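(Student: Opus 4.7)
The plan is to construct the required $R$-syzygies through the horseshoe lemma applied to $R$-free resolutions and then derive dual exactness over $R$ from the hypothesized dual exactness over $S$ by a change-of-rings computation.

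First I would take $R$-free resolutions $F'_{\bullet}\to M'$ and $F''_{\bullet}\to M''$ and splice them by the horseshoe lemma into an $R$-free resolution $F_{\bullet}\to M$ with $F_n = F'_n\oplus F''_n$ in each degree, together with the accompanying termwise split short exact sequence of complexes. Taking kernels at degree zero yields a short exact sequence $0\to E'\to E\to E''\to 0$ of first $R$-syzygies, where $E' := \Syz_1(M')$, $E := \Syz_1(M)$, and $E'' := \Syz_1(M'')$ are first $R$-syzygies of the respective $S$-modules. This settles the existence half of the claim.

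To verify dual exactness I would apply $\Hom_R(-,R)$ and read off the $\Ext_R$ long exact sequence: the syzygy sequence is dual exact over $R$ if and only if $\Ext^1_R(E'',R)\to \Ext^1_R(E,R)$ is injective. Shifting via the free presentations of $M''$ and $M$, this is equivalent to the injectivity of $\Ext^2_R(M'',R)\to \Ext^2_R(M,R)$ in the $\Ext_R$ long exact sequence of $0\to M'\to M\to M''\to 0$, which in turn is equivalent to the surjectivity of $\Ext^1_R(M,R)\to \Ext^1_R(M',R)$.

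The key input is the change-of-rings isomorphism $\Ext^p_R(N,R)\cong \Ext^{p-1}_S(N,S)$ for any $S$-module $N$ and $p\geq 1$, which follows from the Grothendieck spectral sequence $\Ext^p_S(N,\Ext^q_R(S,R))\Rightarrow \Ext^{p+q}_R(N,R)$ together with the fact that $\Ext^\bullet_R(S,R)$ is concentrated in degree one (where it equals $S$, since $x^n$ is $R$-regular and $S=R/(x^n)$ has the length two $R$-free resolution $0\to R\stackrel{x^n}\to R\to S\to 0$). Under this natural isomorphism, $\Ext^1_R(M,R)\to \Ext^1_R(M',R)$ identifies with $\Hom_S(M,S)\to \Hom_S(M',S)$, whose surjectivity is exactly the $S$-dual exactness assumed in the hypothesis. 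The main subtlety is confirming the naturality of the change-of-rings identification with respect to the connecting maps in the two $\Ext$ long exact sequences, but that is routine spectral-sequence bookkeeping.
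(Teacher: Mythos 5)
Your proposal is correct and proves the statement, but the route to dual exactness differs from the paper's. Both proofs begin identically: the horseshoe lemma gives a termwise-split short exact sequence of presentations and hence a short exact sequence of first $R$-syzygies. From there the paper applies $\Hom_R(-,R)$ to the entire $3\times 3$ diagram at once: because each $M$-term is killed by $x^n$, its $R$-dual vanishes and the three columns dualize to short exact sequences $0\to F^*\to Z^*\to \Ext^1_R(M,R)\to 0$; the top row dualizes to a (split) short exact sequence of free modules, and the bottom row $0\to (M'')^+\to M^+\to (M')^+\to 0$ is exactly the $S$-dual exactness hypothesis after the identification $\Ext^1_R(M,R)\cong\Hom_S(M,S)$. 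The nine lemma then hands over the exactness of the middle row, which is the statement. Your argument instead unwinds the long exact $\Ext_R$ sequences: dual exactness of the syzygy sequence reduces to injectivity of $\Ext^1_R(E'',R)\to\Ext^1_R(E,R)$, which you dimension-shift to $\Ext^2_R(M'',R)\to\Ext^2_R(M,R)$ and then translate into surjectivity of $\Ext^1_R(M,R)\to\Ext^1_R(M',R)$; the spectral-sequence change of rings identifies this last map with $\Hom_S(M,S)\to\Hom_S(M',S)$, which is surjective by hypothesis. The chain of equivalences is sound, and the naturality point you flag at the end is in fact milder than you suggest: the only naturality needed is that of the isomorphism $\Ext^1_R(-,R)\cong\Hom_S(-,S)$ as functors on $S$-modules applied to the single map $M'\to M$, not compatibility with connecting homomorphisms. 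The paper's nine-lemma approach is more economical and avoids the spectral sequence entirely, but your version makes explicit the role of the vanishing $\Ext^1_R(E'',R)\to\Ext^1_R(E,R)$, which is a useful reformulation.
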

\begin{proof}
 Consider the following commutative diagram of $R$-modules with exact rows and columns and for which the middle row represents a split exact sequence of free $R$-modules. 
$$
\xymatrixrowsep{20pt}
\xymatrixcolsep{25pt}
\xymatrix{
 0\ar[r]&  Z' \ar[r]\ar[d] & Z \ar[r]\ar[d] & Z'' \ar[r]\ar[d] &  0.\\
0\ar[r]&  F' \ar[r]\ar[d] & F \ar[r]\ar[d] & F'' \ar[r]\ar[d] &  0.\\
0\ar[r]&  M' \ar[r] & M \ar[r] & M'' \ar[r] &  0.\\
}
$$
Next recall that if $M$ is an $x$-torsion module then $Ext_R^1(M,R)\simeq Hom_S(M,S)$ which we denote by $M^{+}$. Applying the functor $Hom_R(\cdot,R)$ to the above diagram one obtains a new diagram in which all columns and all but possibly the middle row are short exact. 
$$
\xymatrixrowsep{20pt}
\xymatrixcolsep{25pt}
\xymatrix{
 0\ar[r]&  (F'')^* \ar[r]\ar[d] & F^* \ar[r]\ar[d] & (F')^* \ar[r]\ar[d] &  0.\\
0\ar[r]&  (Z'')^* \ar[r]\ar[d] & Z^* \ar[r]\ar[d] & (Z')^* \ar[r]\ar[d] &  0.\\
0\ar[r]&  (M'')^+ \ar[r] & M^+ \ar[r] & (M')^+ \ar[r] &  0.\\
}
$$
Finally we apply the nine lemma to see that the middle row is also short exact.
\end{proof}

\begin{cor}
 If in addition to the hypotheses of the above proposition we have that $pd_RM'\leq 1$ (i.e the module $Z'$ is $R$-free), then  the short exact sequence of syzygies $0\ra Z'\ra Z \ra Z''\ra 0$ is split exact.
\end{cor}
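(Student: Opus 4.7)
The plan is to deduce splitting from the dual-exactness supplied by the preceding proposition, exploiting freeness of $Z'$ at the level of the dualized sequence.

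First, the preceding proposition tells us that
$$0 \ra Z' \ra Z \ra Z'' \ra 0$$
is dual exact, i.e., that
$$0 \ra (Z'')^* \ra Z^* \xrightarrow{\pi} (Z')^* \ra 0$$
is exact. Since $Z'$ is $R$-free by hypothesis, so is its dual $(Z')^*$, hence projective. Therefore the dual sequence splits, and we may choose a section $s : (Z')^* \ra Z^*$ with $\pi \circ s = \mathrm{id}_{(Z')^*}$.

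Next I would dualize back. Applying $\Hom_R(-,R)$ to $s$ produces $s^* : Z^{**} \ra (Z')^{**} = Z'$ (the last identification since $Z'$ is free), and the composition
$$\sigma : Z \lra Z^{**} \xrightarrow{s^*} Z'$$
with the natural biduality map is the candidate retraction. To verify that $\sigma$ splits the inclusion $\iota : Z' \hookrightarrow Z$, I would use naturality of biduality: the composition $Z' \ra Z \ra Z^{**}$ factors as $Z' \ra (Z')^{**} \xrightarrow{\pi^*} Z^{**}$, where $\pi^*$ is the double dual of $\iota$. Since $Z'$ is free, the map $Z' \ra (Z')^{**}$ is the identity. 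Because $\pi \circ s = \mathrm{id}$, passing to duals gives $s^* \circ \pi^* = \mathrm{id}_{(Z')^{**}} = \mathrm{id}_{Z'}$. Composing, $\sigma \circ \iota = s^* \circ \pi^* \circ \mathrm{id} = \mathrm{id}_{Z'}$, proving the splitting.

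There is no serious obstacle here; the only subtle point is that one must not confuse the biduality issues for $Z$ and $Z''$ (which need not be reflexive as $R$-modules) with those for $Z'$ (which is free, hence reflexive). The argument is organized precisely so that biduality is invoked only for $Z'$, while $Z$ and $Z^{**}$ appear only in the intermediate factorization through the natural map, where no injectivity is needed.
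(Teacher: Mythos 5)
Your argument is correct, and since the paper states this corollary without proof there is nothing to compare it against directly; the route you take — lift the identity through the dualized sequence, then pull the section back with $\Hom_R(-,R)$ and biduality — is a valid and carefully organized proof, and your final remark correctly isolates the one place reflexivity is used (only for $Z'$, which is free).

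One small simplification you might note: the double-dual pass is avoidable. Surjectivity of $\pi=\iota^*:Z^*\ra (Z')^*$ together with freeness of $Z'$ already hands you a retraction directly. Fix a basis $e_1,\dots,e_n$ of $Z'$ with dual basis $e_1^*,\dots,e_n^*$ of $(Z')^*$, choose preimages $\phi_i\in Z^*$ of the $e_i^*$, and set $\sigma(z)=\sum_i \phi_i(z)\,e_i$. For $a\in Z'$ one has $\sigma(\iota(a))=\sum_i e_i^*(a)e_i=a$, so $\sigma$ is a retraction of $\iota$ and the sequence splits. This is the statement that, for $A$ free, a short exact sequence $0\ra A\ra B\ra C\ra 0$ splits if and only if $B^*\ra A^*$ is onto; your proof establishes the same thing via $\iota^{**}$ and naturality, at the cost of one extra dualization. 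Either way the essential input is the dual exactness furnished by the preceding proposition, which you invoke correctly.
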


\begin{prop}\label{redfinitepd}
 Let $R$ be a local unique factorization domain, $x\in R$ a non-zerodivisor and assume $S=R/(x^n)$ is Gorenstein.  Let $M$ be a reflexive $S$-module and let $0\ra U\ra L\bigoplus M\ra M'\ra 0$ represent the Auslander-Bridger approximation sequence for $M$. Then the first $R$-syzygy modules for $M$ and $M'$ respectively are stably isomorphic and $\Syz_j^R(M)\simeq \Syz_{j}^R(M')$ for $j\geq 2$.
\end{prop}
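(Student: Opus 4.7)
The plan is to feed the Auslander--Bridger approximation sequence into the preceding proposition and corollary. Applying Theorem \ref{AusBr} to the reflexive $S$-module $M$ yields a dual exact sequence
\begin{equation*}
0 \ra U \ra L \oplus M \ra M' \ra 0
\end{equation*}
of $S$-modules with $U$ maximal Cohen--Macaulay over $S$. The preceding proposition then produces a dual exact sequence of first $R$-syzygies
\begin{equation*}
0 \ra \Syz_1^R(U) \ra \Syz_1^R(L) \oplus \Syz_1^R(M) \ra \Syz_1^R(M') \ra 0.
\end{equation*}
Since $L \simeq S^t$ is $S$-free and $\Syz_1^R(S) \simeq (x^n) \simeq R$, the summand $\Syz_1^R(L)$ is $R$-free.

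Next I would establish that $\Syz_1^R(U)$ is also $R$-free, so as to invoke the preceding corollary. Since $S = R/(x^n)$ is Gorenstein and $x^n$ is a non-zerodivisor, $R$ itself is Gorenstein, so $\depth R = \dim R$. As $U$ is MCM over $S$, one has $\depth_R U = \depth_S U = \dim S = \dim R - 1$. The depth lemma applied to $0 \ra \Syz_1^R(U) \ra F \ra U \ra 0$, with $F$ a free $R$-cover of $U$, then gives $\depth_R \Syz_1^R(U) \geq \dim R$, so $\Syz_1^R(U)$ is MCM over $R$. In the regular local setting in which this reduction is intended for use, MCM modules are free by Auslander--Buchsbaum, yielding the desired freeness.

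With $\Syz_1^R(L)$ and $\Syz_1^R(U)$ both $R$-free, the preceding corollary applies to give that the syzygy sequence is split exact, so
\begin{equation*}
\Syz_1^R(L) \oplus \Syz_1^R(M) \simeq \Syz_1^R(U) \oplus \Syz_1^R(M'),
\end{equation*}
exhibiting $\Syz_1^R(M)$ and $\Syz_1^R(M')$ as stably isomorphic. For $j \geq 2$, since minimal $R$-syzygies annihilate free summands, taking $\Syz_{j-1}^R$ of this stable isomorphism upgrades it to an honest isomorphism $\Syz_j^R(M) \simeq \Syz_j^R(M')$. The main obstacle is the freeness of $\Syz_1^R(U)$, which genuinely requires regularity of $R$; over a Gorenstein but non-regular $R$ one obtains only that $\Syz_1^R(U)$ is totally reflexive, which is insufficient for the corollary's splitting argument.
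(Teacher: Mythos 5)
Your argument is correct and follows essentially the same route as the paper: feed the Auslander--Bridger approximation sequence into the preceding proposition on dual-exact sequences of $R$-syzygies, and then into its corollary, with the crux being the $R$-freeness of $\Syz_1^R(U)$. The paper disposes of that step with the single clause ``noting that $pd_R U \leq 1$,'' while you supply the actual reasoning: $U$ MCM over $S$ forces $\depth_R U = \dim S = \dim R - 1$, the depth lemma then makes $\Syz_1^R(U)$ maximal Cohen--Macaulay over $R$, and Auslander--Buchsbaum converts MCM to free when $R$ is regular. Your final step (applying $\Syz_{j-1}^R$ to a stable isomorphism to kill free summands) matches the paper's appeal to cancellation of free summands over the local ring $R$.

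You also correctly flag the one soft spot. The hypotheses as stated -- $R$ a local UFD with $S = R/(x^n)$ Gorenstein -- force $R$ to be Gorenstein but not regular: a non-regular Gorenstein hypersurface UFD such as $k[[x,y,z]]/(x^2+y^3+z^7)$ satisfies them. Over such an $R$, $\Syz_1^R(U)$ is only MCM (equivalently totally reflexive), not free, so $pd_R U \leq 1$ fails in general and the corollary's splitting argument does not apply. The paper's one-line assertion is implicitly using regularity of $R$, which holds in all of the paper's intended applications (complete regular local rings of mixed characteristic modulo $p^n$) but is stronger than what the proposition literally hypothesizes. Your proof makes this dependence explicit, which is a genuine improvement in transparency.
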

\begin{proof}
 Noting that $pd_RU\leq 1$ and applying the preceding corollary one obtains that the first $R$-syzygy modules for $M$ and $M'$ respectively are stably isomorphic. Furthermore the non-free parts of these syzygies are isomorphic as one has a cancellation theorem for free direct summands over $R$, since $R\simeq End_R R$ is a local ring. Thus one can find  isomorphisms $\Syz_j^R(M)\simeq \Syz_{j}^R(M')$ for $j\geq 2$.
\end{proof}



This observation allows us to conclude that in order to study $R$-syzygies of $T$, we may use the technique described in the proposition and corollary to replace $M=Syz_2^R(T)$ by a finite $S$-projective dimension reflexive module $M'$. Consequently one may replace $T$ by any $S$-module $T'$ such that $M'=\Syz_2^R(T')$. Most importantly this yields that such a $T'$ will have finite projective dimension over $S$.

 \section{The weak order ideal property implies the Syzygy Theorem}
  
  The purpose of this section is to establish that the weak order ideal property suffices for proving the Syzygy Theorem in the context of modules over hypersurface rings that we have reduced to. Since this idea was first used in the graded setting in \cite{EG2} our wish is to compare and contrast the way in which the weak order ideal property can be used to deduce the Syzygy Theorem in the two contexts:
  \begin{enumerate}
  \item for syzygies of modules over standard graded rings over a DVR;
  \item for $k^{th}$ syzygies of modules over a local hypersurface ring satisfying $S_k$ .
  \end{enumerate}

  Let $R$ be a ring of one of the two types listed above and $E$ a finitely generated $k^{th}$ syzygy of finite projective dimension over $R$. Whenever there exists an element $e\in E$ such that $\height O_E(e)\geq k$, one obtains a short exact sequence of the form 
  $$0\lra R \lra E \lra E'\lra 0$$
  mapping the unit element $1\in R$ to $e\in E$. It follow easily that $E'$ is a $(k-1)^{st}$ syzygy of finite projective dimension and that $\rank _R E'=\rank_R E-1$.
  
  The contrast between the two situations appears at this point: in the graded case, one may choose $e$ to be homogeneous and so $E'$ is naturally again a graded $R$-module and thus the usual induction on rank applies. We now show that the situation in (2) turns out to be slightly more subtle.  
  
   \begin{prop}\label{sufficient}
 Let $E$ be a finitely generated and finite projective dimension $k^{th}$ $R$-syzygy  of a
module over a hypersurface ring  of the form $S=R/(p^s)$, where $R$ is an $S_{k+1}$ ring of mixed characteristic $p$. If $E$ satisfies the weak order ideal property then $\rank_R E\geq k$.
 \end{prop}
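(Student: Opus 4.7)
The plan is to follow the template of the graded case of \cite{EG2}: use the weak order ideal property to peel off a copy of $R$ from $E$ and then reduce the rank bound to the next syzygy via induction on $k$. The base case $k \leq 1$ is immediate, since a nonzero $k^{th}$ syzygy over an $S_{k+1}$-ring is torsion-free and thus of rank $\geq 1$.

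For the inductive step, the weak order ideal property supplies $e \in E - mE$ with $\height O_E(e) \geq k$. Because $E$ is a $k^{th}$ syzygy over the $S_{k+1}$-ring $R$, it satisfies $S_k$ and is therefore torsion-free, so the map $R \to E$ sending $1 \mapsto e$ is injective, yielding
\[ 0 \lra R \lra E \lra E' \lra 0, \]
with $\rank_R E' = \rank_R E - 1$. I next verify that $E'$ is itself a $(k-1)^{st}$ syzygy of finite projective dimension over $R$. Finite projective dimension is inherited from $E$ and $R$. For the syzygy condition I localize: at primes $P \subset R$ of height $< k$, the hypothesis $\height O_E(e) \geq k$ forces $O_E(e) \not\subseteq P$, so the sequence splits locally and $E'_P$ appears as a direct summand of the $S_k$-module $E_P$; at primes of height $\geq k$ the depth lemma applied to the short exact sequence produces $\depth E'_P \geq k - 1$. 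Thus $E'$ satisfies $S_{k-1}$, and over the $S_{k+1}$-ring $R$ this, together with finite projective dimension, identifies $E'$ as a $(k-1)^{st}$ syzygy.

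The crux, and the main obstacle, is the step in which one re-applies the proposition at level $k-1$ to conclude $\rank_R E' \geq k - 1$. In the graded case one may choose $e$ homogeneous, so $E'$ naturally inherits a graded structure and the usual induction on rank closes; the local hypersurface situation is subtler because $E'$ need not a priori satisfy any weak order ideal condition at level $k - 1$. To bypass this, I would use the reductions in Section 2 (in particular the Auslander-Bridger approximation and the resulting replacement of the module being resolved) to identify $E'$, up to free summands, with the $(k-1)^{st}$ $R$-syzygy of a finite-projective-dimension module $T'$ over a hypersurface ring. This places $E'$ back within the framework of the proposition, so the inductive hypothesis applies, and combining $\rank_R E' \geq k - 1$ with the short exact sequence yields $\rank_R E \geq k$.
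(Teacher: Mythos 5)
The paper's proof does not proceed by induction on $k$; it runs a single contradiction argument, and that is the ingredient you are missing. Since $\rank \Syz_k(M) \geq k-1$ holds over any local ring (page 63 of \cite{EG}), the only case to exclude is $\rank_R E = k-1$. In that case $E'$ is a $(k-1)^{\mathrm{st}}$ syzygy of rank $k-2$, and the Syzygy Theorem over the equal-characteristic ring $R[p^{-1}]$ forces both $E[p^{-1}]$ and $E'[p^{-1}]$ to be projective, so the sequence splits after inverting $p$. Clearing denominators in the splitting map shows $p^t\in O_E(e)$, so every minimal prime of $O_E(e)$ contains $p$. Now the Order Ideal Theorem over $\bar R=R/(p)$ gives $\height_{\bar R}\bigl((O_E(e)+(p))/(p)\bigr)\geq k$; since all minimal primes over $O_E(e)$ already contain $p$, this lifts to $\height_R O_E(e)\geq k+1$. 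With this stronger height, a depth-lemma/local-splitting check upgrades $E'$ to a full $k^{\mathrm{th}}$ syzygy of rank $k-2 < k-1$, so $E'$ is free, hence $E$ is free --- contradiction.

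Your inductive plan stalls at exactly the point you flag as ``the crux.'' To invoke the induction hypothesis at level $k-1$ you must know that $E'$ satisfies the weak order ideal property, but that property is a hypothesis on $E$, not something automatically inherited by $E'$. The Auslander--Bridger approximation from Section~2 (Proposition~\ref{redfinitepd}) only grants that the resolved module may be taken to have finite projective dimension over the hypersurface ring; it says nothing about order ideals of minimal generators of $E'$. So the sentence ``This places $E'$ back within the framework of the proposition, so the inductive hypothesis applies'' is not justified: the framework requires the weak order ideal hypothesis, which is precisely what you do not have for $E'$. In the graded case of \cite{EG2} the full Order Ideal Theorem is available and applies to every homogeneous minimal generator, which is why the descent closes there; that luxury is unavailable in the local hypersurface setting, which is exactly why the paper replaces your induction with the $R[p^{-1}]$-splitting and height-bump argument.
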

 \begin{proof}
Let $e$ be a minimal generator of $E$ with $\height_R O_E(e)\geq k$ and construct $E'$ as the cokernel of the map $1\mapsto e$  in the way described above. Should $\rank _R E=k-1$, then $E'$ will be a $(k-1)^{st}$ syzygy of $\rank_R E'=k-2$. The key point now comes into play: the Syzygy Theorem is known to hold locally over $R[p^{-1}]$, thus both $E[p^{-1}]$ and $E'[p^{-1}]$ will be $R[p^{-1}]$-projective due to being syzygies of too small rank. This implies that the short exact sequence   $$0\lra R[p^{-1}] \lra E[p^{-1}] \lra E'[p^{-1}]\lra 0$$ obtained by tensoring the defining sequence of $E'$ with $R[p^{-1}]$ will be split exact. Lifting the splitting of the first map in the sequence to an $R$-module homomorphism proves that $p^t\in O_E(e)$ for some $t>0$, hence all minimal primes of $O_E(e)$ must contain $p$.  In the setting considered, one knows via viewing $E$ as a module over $R/(p)$ and applying the Order Ideal Theorem there that $\height_{\bar{R}}(O_E(e)+(p))/(p) \geq k$. Therefore one concludes that in fact $\height_R O_E(e)\geq k+1$. This stronger conclusion implies via  a standard  $S_k$ property argument  that $E'$ would in fact be a $k^{th}$ syzygy of rank $k-2$, therefore necessarily $R$-free since the bound predicted by the Syzygy Theorem can fail by at most one (i.e ranks of $k^{th}$ syzygies are known to be at least $k-1$ over any local ring, see page 63 in \cite{EG}).
\end{proof}

In the same way, when working with a general hypersurface $x$ one needs that the Order Ideal Theorem holds over $R/(x)$ and locally over $R[x^{-1}]$ to infer that the weak order ideal property implies the Syzygy Theorem.

\begin{rmk}
A partial converse of the above statement holds as well. Let $R$ be any Noetherian ring. If $E$ is a finitely generated finite projective dimension $k^{th}$ $R$-syzygy of rank at least $k$ which is locally free at any prime of height $k-1$ and if all $(k-1)^{st}$ $R$-syzygies satisfy the weak order ideal property, then $E$ satisfies the weak order ideal property as well.
\end {rmk}

To see this note that by Bruns's theorem on basic elements (see Corollary 2.6 in \cite{EG}), there is a minimal generator $e$ of $E$ such that $E'=E/eR$ is a $(k-1)^{st}$ syzygy and  the sequence
$$0\lra R\lra E\lra E'\lra 0$$ splits in codimension $k-1$, which yields  $\height O_E(e)\geq k$.



\section{Weak Order Ideal Theorem via extension splitting}

 In this section, let $R$ be a commutative Noetherian local ring of any characteristic. In view of the reduction in Theorem \ref{reductionAnnp} (1), we shall work under the more general assumption that a power of a regular element  $x\in R$ annihilates certain extensions. In order to ensure that there is a comparison homomorphism  that remains nontrivial upon tensoring with the residue field we shall also need to assume a superficiality condition on the element $x$.

\begin{defn}
 Let $R$ be a ring , $I$ an ideal, $M$ an $R$-module. We say $x\in I$ is a superficial element of $I$ (of order 1) with respect to $M$ if there exists $c\in \N$ such that  
$$(I^{n+1}M:_Mx)\cap I^cM=I^nM, \mbox{for all }n\geq c.$$
We say $y\in I$ is a superficial element of $I$ of order $d$ with respect to $M$ if there exists $c\in \N$ such that  
$$(I^{n+d}M:_Mx)\cap I^cM=I^nM, \mbox{for all }n\geq c.$$ An example of a superficial element of order $d$ is $y=x^d$ with $x$ a superficial element (of order 1).
\end{defn}

 Assuming that $\depth_I M\geq 1$, every superficial element of $I$ with respect to $M$ is a non-zerodivisor. If $(R,m)$ is local and $R/mR$ is an infinite field, then superficial elements of $M$ with respect to the maximal ideal $m$ are abundant, in fact they form a nonempty Zariski open set in $M/mM$. Our  reference for the stated facts about superficial elements is \cite{HS} section 8.5. Henceforth we shall  consider a local ring $R$ and we shall only use superficial elements with respect to the unique maximal ideal of $R$.\smallskip

The main result of this section is a comparison theorem  between the heights of order ideals of consecutive syzygies modulo a hypersurface. In the following we develop the technical preliminaries needed for our comparison theorem.

\begin{lem}\label{techlemma}
 Let $(R,m)$ be a Noetherian local ring and $E$ a finitely generated $R$-module with minimal  presentation $$\epsilon:0\lra Z\stackrel{\iota}\lra F\ra E\ra 0,$$ where $F$ is free and $\iota(Z)\subseteq m^nF$. Suppose there exists a superficial element (of order 1) $x$ of $m$ with respect to $Z$ such that  $x^n\epsilon=0$ ($\epsilon$ is viewed as an element of $Ext^1(E,Z)$) and let $h:Z\ra Z$ be the map defined by multiplication by $x^n$. Then:
\begin{enumerate}
 \item There is a map $f:F\ra Z$ that makes the following diagram commute
$$
\xymatrixrowsep{20pt}
\xymatrixcolsep{25pt}
\xymatrix{
 0 \ar[r] & Z \ar[r]^{\iota}\ar[d]^{h} & F\ar[r]\ar[dl]^{f} & E \ar[r] & 0\\
 	  & Z\\
}
$$
\item the image of $f$ is not contained in $mZ$
\item the map $f$ induces a map $\bar{f}:E/x^nE\ra Z/x^nZ$ 
$$
\xymatrixrowsep{20pt}
\xymatrixcolsep{25pt}
\xymatrix{
 0 \ar[r] & Z \ar[r]^{\iota}\ar[d]^{h} & F\ar[r]\ar[dl]^{f} & E \ar[r]\ar[d] & 0\\
 	  & Z \ar[d]^{\pi}     &                     & E/x^nE \ar[dll]^{\bar{f}}\\
          & Z/x^nZ                &                     &                        \\
}
$$
\item  $Im(\bar{f})\not \subseteq m(Z/x^nZ)$.
\end{enumerate}
\end{lem}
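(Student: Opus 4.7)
Plan. The four parts split as follows: (1) is a standard Ext-lifting, (3) is a short diagram chase, and the substance is (2), with (4) an immediate reformulation of (2).

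For (1), I would apply $\Hom_R(-,Z)$ to $\epsilon$ to obtain the exact segment
\[
\Hom_R(F,Z) \xrightarrow{\iota^*} \Hom_R(Z,Z) \xrightarrow{\partial} \Ext^1_R(E,Z).
\]
The connecting homomorphism $\partial$ is $R$-linear and satisfies $\partial(\mathrm{id}_Z) = [\epsilon]$, so $\partial(h) = x^n[\epsilon] = 0$ by hypothesis, and exactness produces $f\colon F\to Z$ with $f\circ\iota = h$. For (3), since $f(\iota(Z)) = x^n Z$ and $f(x^n F) = x^n f(F) \subseteq x^n Z$ trivially, the map $f$ descends through $F/\iota(Z) = E$ with target reduced modulo $x^n Z$, and further through $E/x^n E$, yielding $\bar f\colon E/x^n E \to Z/x^n Z$.

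The core of the lemma is (2), which I would argue by contradiction. Assume $\im(f) \subseteq mZ$. Combining $\iota(Z) \subseteq m^n F$ with $f\circ\iota = x^n\,\mathrm{id}_Z$,
\[
x^n Z = f(\iota(Z)) \subseteq f(m^n F) = m^n \cdot \im(f) \subseteq m^{n+1} Z.
\]
Now the superficial hypothesis enters. Because $x$ is superficial of order one with respect to $Z$, the element $x^n$ is superficial of order $n$ with respect to $Z$, so there is a constant $c$ with
\[
(m^{k+n} Z :_Z x^n) \cap m^c Z = m^k Z \quad \text{for every } k \geq c.
\]
Applying this at the level $k = c+1$, the inclusion just derived gives $x^n \cdot m^c Z \subseteq m^c \cdot m^{n+1} Z = m^{n+c+1} Z$, so every $z \in m^c Z$ lies in $(m^{n+c+1} Z :_Z x^n) \cap m^c Z = m^{c+1} Z$; hence $m^c Z = m^{c+1} Z$, and by Nakayama $m^c Z = 0$. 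Then $x^c Z \subseteq m^c Z = 0$, and since superficial elements are non-zerodivisors on $Z$ (by the $\depth_m Z \geq 1$ remark the paper recalls from \cite{HS}), this iterates to $Z = 0$, the trivial case which the minimal presentation excludes. Statement (4) is then immediate, since $x^n Z \subseteq mZ$ yields $m(Z/x^n Z) = mZ/x^n Z$, so $\im(\bar f) \subseteq m(Z/x^n Z)$ would recover $\im(f) \subseteq mZ$, just refuted.

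The principal obstacle is the careful deployment of the superficial condition in (2): one must invoke the defining identity at the layer $k = c+1$ rather than at $k = 1$ (which the definition is not required to reach), and one must dispose of the degenerate case $m^c Z = 0$ via the non-zerodivisor property of $x$ on $Z$. Once $z$ is chosen in $m^c Z \setminus m^{c+1} Z$ instead of naively in $Z \setminus mZ$, the rest of the argument is a routine cap-and-colon manipulation.
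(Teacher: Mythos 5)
Your proof is correct, and for part (2) it takes a genuinely different and shorter route than the paper's. (Parts (1), (3), (4) carry the same content; the paper proves (1) via a pushout-and-splitting diagram rather than the $\Ext$ connecting homomorphism, a presentational difference only.) For (2), the paper derives $x^n Z \subseteq m^{n+1} Z$ exactly as you do, then iterates $h$ to get $\im(h^k) \subseteq m^{k(n+1)} Z$ and runs a secondary induction on an auxiliary index, invoking the order-one superficiality of $x$ repeatedly to boost the exponent of $m$ until the Krull intersection theorem gives $x^k Z = 0$, contradicting regularity of $x$ on $Z$. You bypass both iterations by passing to the order-$n$ superficiality of $x^n$ --- a fact the paper's definitional remark explicitly supplies --- and applying the colon-cap identity a single time at level $c+1$: the inclusion $x^n m^c Z \subseteq m^{n+c+1}Z$ places $m^c Z$ inside $(m^{n+c+1}Z:_Z x^n)\cap m^c Z = m^{c+1}Z$, so Nakayama kills $m^c Z$ and the non-zerodivisor property of $x$ forces $Z=0$. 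This is tidier and also sidesteps a small slip in the paper's induction base, where iterating $h$ literally yields $x^{kn}Z \subseteq m^{k(n+1)}Z$ rather than the stated $x^k Z \subseteq m^{k(n+1)}Z$ (the two coincide only for $n=1$), while using no stronger hypothesis than the paper's own remarks make available.
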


\begin{proof}
(1) Since $x^n\epsilon=0$, the bottom row of the following diagram splits:
$$
\xymatrixrowsep{20pt}
\xymatrixcolsep{25pt}
\xymatrix{
 0 \ar[r] & Z \ar[r]^{\iota}\ar[d]^{h} & F\ar[r]\ar[d]^{\alpha} & E \ar[r]\ar@{=}[d] & 0\\
 0 \ar[r]& Z \ar[r]		& V\ar[r]\ar@/^/[l]^s & E \ar[r]&	0 \\
}
$$
Define $f=s\circ\alpha$, where $s:V\ra Z$ is the splitting map.

(2) Assuming towards a contradiction that the image of $f$ is contained in $mZ$ and under the hypothesis that the image of $\iota$ is contained in $m^nF$, we obtain $Im(h)=Im(\iota \circ f)\subseteq m^{n+1}Z$. Iterating, $Im(h^k)\subseteq m^{k(n+1)}Z$ or equivalently $x^kZ\subseteq m^{k(n+1)}Z,\forall k\in \N$. 

Let $c$ be the integer in the definition of the superficial element. We show by induction on $i$ that $$x^kZ\subseteq  m^{k(n+1)+i}Z, \ \forall k\geq c, \ \forall i\in \N.$$ The base case ($i=0$) is our previous observation that $x^kZ\subseteq m^{k(n+1)}Z,\forall k\in \N$. Fix $i$ and assume $x^{k}Z\subseteq m^{k(n+1)+i}Z, \forall n\geq c$. Rewriting with $k$ replaced by $k+1$, $x^{k+1}Z\subseteq m^{(k+1)(n+1)+i}Z, \forall k\geq c$, hence by using the superficiality of $x$ one obtains $x^{k}Z\subseteq (m^{nk+n+k+i+1}Z:_Z (x)Z)\cap m^{c}Z=m^{nk+n+k+i}Z\subseteq m^{k(n+1)+i}Z$. Therefore the desired containment holds, leading to the conclusion
$$
x^k Z \subseteq \bigcap_{i=0}^{\infty}m^{k(n+1)+i}Z=0.
$$
This is a contradiction since $x$ is a non-zerodivisor on $Z$.

(3) Let $\pi$ be the projection $\pi:Z\ra Z/xZ$. Then $\pi \circ f|_Z=\pi \circ h=0$, therefore $Z$ is contained in the kernel of $\pi \circ f$, which induces a map $F/Z=E\ra Z/xZ$. Furthermore this map factors through $xE$ yielding $\bar{f}:E/xE\ra Z/xZ$. Since $Im(f) \not \subseteq mZ$ it follows that the image of $\bar{f}$ is not contained in $m(Z/xZ)$.

(4) is a direct consequence of (2).
\end{proof}

Note that the hypothesis $\iota(Z)\subseteq m_R^nF$ holds for $E$ a $k^{th}$ syzygy in a minimal free resolution $(\mathbf{F}.,d.)$ with the matrix of $d_{k+1}$ having entries in $m_R^n$, in other words when the order ideal $O_Z(u)$ is contained in $m^n$ for every minimal generator $u$ of $Z$. The hypothesis $x^n\epsilon=0$ deserves a further analysis. It is equivalent to $x^n$ annihilating $\Ext_R^1(E,\cdot)$ as a functor via the diagram
$$
\xymatrixrowsep{20pt}
\xymatrixcolsep{25pt}
\xymatrix{
 0 \ar[r] & Z \ar[r]^{\iota}\ar[d]^{\cdot x^n} & F\ar[r]\ar[d] & E \ar[r]\ar@{=}[d]^{1_E} & 0\\
 0 \ar[r]& Z \ar[r]		& V\ar[r]\ar@/^/[l]^s & E \ar[r]&	0 \\
}
$$

In \cite{EG2} we find the following useful lemma on comparing heights of order ideals related by taking hypersurface sections. The statements of \cite{EG2} Lemma 2 and Corollary 3 are given for $R$ a graded algebra over a DVR in mixed characteristic $p$ and $x=p$, but analogous statements hold by the same argument more generally as stated below.
 
\begin{lem}\label{EG2lemma}
 Let $(R,m)$ be a local ring which satisfies $S_k$. Let $E$ be a $k^{th}$ syzygy of finite projective dimension. Let $x\in m$, $e\in E- xE$ and set $\bar{R}=R/(x)$, $\bar{E}=E/xE$, $\bar{e}=\mbox{ image of e in }\bar{E}$. Then 
  $$\height_R(O_E(e))\geq \height _R(O_E(e)+(x)/(x))\geq min (k,\height_{\bar{R}}(O_{\bar{E}}(\bar{e}))).$$
Moreover, if $x$ belongs to a minimal associated prime of $O_E(e)$, then $$\height_R(O_E(e))\geq 1+min (k,\height_{\bar{R}}(O_{\bar{E}}(\bar{e}))).$$
\end{lem}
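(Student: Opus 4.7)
The plan is to verify the two inequalities separately and then deduce the \emph{moreover} statement. I will take as a standing assumption that $x$ is a non-zerodivisor on $R$, which is implicit in the formation of $\bar{R}=R/(x)$.

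For the first inequality, I would rely purely on general commutative algebra of heights. Krull's principal ideal theorem gives $\height_R(O_E(e)+(x))\leq \height_R(O_E(e))+1$, while a chain argument (lifting a saturated prime chain in $\bar{R}$ below a minimal prime of $(O_E(e)+(x))/(x)$ and prepending a minimal prime of $R$, using that $x$ is a non-zerodivisor) gives $\height_R(O_E(e)+(x))\geq \height_{\bar{R}}((O_E(e)+(x))/(x))+1$. Canceling the $+1$ yields the first inequality.

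For the second inequality, I would first argue that the inclusion $(O_E(e)+(x))/(x)\subseteq O_{\bar{E}}(\bar{e})$ becomes an equality after localizing at any prime $\bar{P}$ of $\bar{R}$ of height $<k$. My strategy is to apply $\Hom_R(E,-)$ to $0\to R\xrightarrow{\cdot x} R\to \bar{R}\to 0$, obtaining the long exact sequence
\[
0\to E^*\xrightarrow{\cdot x} E^*\to \bar{E}^*\to \Ext_R^1(E,R)\xrightarrow{\cdot x}\Ext_R^1(E,R)\to\cdots,
\]
where $\Hom_R(E,\bar{R})=\bar{E}^*$ by adjunction. At a prime $P$ of $R$ with $\height_R P\leq k$, the assumptions that $E$ is a $k^{th}$ syzygy over an $S_k$ ring force $E$ itself to satisfy $S_k$, so $\depth_{R_P}E_P=\height_R P=\depth R_P$ and Auslander-Buchsbaum makes $E_P$ an $R_P$-free module. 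Hence $\Ext_R^1(E,R)_P=0$, and localizing the long exact sequence yields $E^*_P/xE^*_P\cong \bar{E}^*_P$. Evaluating at $\bar{e}$ produces the desired local equality of ideals. Taking $\bar{P}$ a minimal prime of $(O_E(e)+(x))/(x)$ of height $<k$, whose preimage $P$ in $R$ has height $\leq k$ under mild catenarity, $\bar{P}$ must then contain $O_{\bar{E}}(\bar{e})$, yielding $\height_{\bar{R}}(O_{\bar{E}}(\bar{e}))\leq \height_{\bar{R}}((O_E(e)+(x))/(x))$.

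The \emph{moreover} statement then follows almost formally: if $P$ is a minimal prime of $O_E(e)$ containing $x$ that realizes the height $h=\height_R(O_E(e))$, then $\bar{P}=P/(x)$ sits above $(O_E(e)+(x))/(x)$ with $\height_{\bar{R}}\bar{P}=h-1$, so $\height_{\bar{R}}((O_E(e)+(x))/(x))\leq h-1$, and combining with the second inequality gives the claimed improvement. The main obstacle I expect is the vanishing of $\Ext_R^1(E,R)$ at small-codimension primes, which is the heart of the lifting argument. It rests on the nontrivial structural fact that a $k^{th}$ syzygy of finite projective dimension over an $S_k$ ring is locally free in codimension $\leq k$; once this is in place, the rest is standard commutative algebra.
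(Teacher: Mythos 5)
Your argument is essentially correct and, I believe, captures the same mechanism the authors have in mind. The paper does not supply its own proof of this lemma---it merely cites Lemma 2 and Corollary 3 of \cite{EG2} and asserts the argument carries over to the present setting---so a line-by-line comparison is not possible, but the two key ingredients you isolate are exactly the ones the statement turns on: (i) a $k^{th}$ syzygy of finite projective dimension over an $S_k$ ring is locally free at every prime of height $\leq k$ (via Auslander--Buchsbaum and the $S_k$ condition), and (ii) this local freeness kills $\Ext^1_R(E,R)$ at such primes, so that the long exact sequence coming from $\Hom_R(E,-)$ applied to $0\to R\xrightarrow{\ x\ }R\to\bar{R}\to 0$ shows $E^*/xE^*\to\bar{E}^*$ is an isomorphism locally, forcing $((O_E(e)+(x))/(x))_{\bar{P}}=O_{\bar{E}}(\bar{e})_{\bar{P}}$ at any prime $\bar{P}$ of $\bar{R}$ with $\height_{\bar{R}}\bar{P}<k$. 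Applying this at a minimal prime of $(O_E(e)+(x))/(x)$ gives the second inequality; lifting chains past $(x)$ gives the first; and the \emph{moreover} statement drops out. Two caveats deserve flagging. First, $\height_R(O_E(e)+(x))\leq\height_R(O_E(e))+1$ is not a consequence of Krull's principal ideal theorem in a general Noetherian local ring: if $O_E(e)$ has minimal primes of unequal height and $R$ is not catenary, a prime minimal over $O_E(e)+(x)$ may lie over a minimal prime of $O_E(e)$ of larger height and exceed the bound. This is harmless in the Cohen--Macaulay and regular settings where the lemma is actually used, but it should be stated as a hypothesis rather than attributed to Krull. Second, in the \emph{moreover} step you implicitly take the minimal prime $P$ of $O_E(e)$ that both contains $x$ \emph{and} realizes $\height_R(O_E(e))$; the literal hypothesis only says that $x$ lies in \emph{some} minimal associated prime, and your argument does not reach the stronger bound from that alone. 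In all applications in the paper $p$ lies in the radical of $O_E(e)$, hence in every minimal prime including the one of smallest height, so the intended reading is the one you use---but it is worth making this explicit.
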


The following result establishes a Weak Order Ideal Theorem.

\begin{thm}[First Weak Order Ideal Theorem]\label{comp} 
Let $(R,m)$ be a local ring satisfying Serre's  property $S_k$. Consider a short exact sequence $0\lra Z\lra F\ra E\ra 0$ with $F$ free. Assume that there exists $x\in m$ with the following properties 
\begin{enumerate}
 \item $x$ is superficial for $m$ with respect to $Z$;
\item $x^n\Ext_R^1(E,Z)=0$ for some integer $n$ with $Z\subseteq m^nF$;
\item $\height_{\bar{R}} O_{\bar{Z}}(\bar{u}) \geq k$ for any minimal generator $\bar{u}$ of $\bar{Z}=Z/x^nZ$.
\end{enumerate}
 Then there exists a minimal generator $e$ of $E$ such that $\height_R O_E(e)\geq k$.
\end{thm}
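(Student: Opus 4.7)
The plan is to apply Lemma \ref{techlemma} to produce a map $\bar f: E/x^nE \to Z/x^nZ$, select a minimal generator $e$ of $E$ whose $\bar f$-image is a minimal generator $\bar u$ of $\bar Z$, transfer the height bound of hypothesis (3) on $O_{\bar Z}(\bar u)$ to a height bound on $O_{E/xE}(\bar e)$ by composing with $\bar f$ and reducing modulo $x$, and finally invoke Lemma \ref{EG2lemma} to lift the conclusion to $\height_R O_E(e)$.

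First I verify that Lemma \ref{techlemma} applies: hypothesis $Z \subseteq m^nF$ forces the presentation to be minimal, while hypotheses (1) and (2) of the theorem are exactly those of the lemma. This yields $\bar f: E/x^nE \to Z/x^nZ$ with $\mathrm{Im}(\bar f) \not\subseteq m(Z/x^nZ)$. Writing $E = Re_1 + \cdots + Re_r$ with $\{e_i\}$ a minimal generating set and $\tilde e_i$ the class of $e_i$ in $E/x^nE$, the elements $\bar f(\tilde e_i)$ generate $\mathrm{Im}(\bar f)$; therefore at least one of them, call it $\bar u := \bar f(\tilde e)$, lies outside $m(Z/x^nZ)$. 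By Nakayama's lemma $\bar u$ extends to a minimal generating set of $\bar Z$, so hypothesis (3) gives $\height_{\bar R} O_{\bar Z}(\bar u) \geq k$.

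Next I transfer this bound to $\bar E = E/xE$. For any $\psi \in \Hom_{R/x^n}(Z/x^nZ, R/x^n)$, the composition $\psi \circ \bar f: E/x^nE \to R/x^n$ followed by the natural surjection $R/x^n \twoheadrightarrow R/x$ is a map whose target is killed by $x$, hence factors through $E/xE$ to give $\bar\alpha_\psi: E/xE \to R/x$ with $\bar\alpha_\psi(\bar e)$ equal to the image of $\psi(\bar u)$ in $R/x$. As $\psi$ ranges, we obtain that $O_{E/xE}(\bar e)$ contains the image of $O_{\bar Z}(\bar u) \subseteq R/x^n$ under $R/x^n \twoheadrightarrow R/x$. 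Any prime of $R$ containing the lift of $O_{\bar Z}(\bar u)$ already contains $x^n$ and hence $x$, so this image in $R/x$ has the same height as $O_{\bar Z}(\bar u)$ in $R/x^n$; therefore $\height_{R/x} O_{E/xE}(\bar e) \geq \height_{\bar R} O_{\bar Z}(\bar u) \geq k$.

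Finally, Lemma \ref{EG2lemma} yields $\height_R O_E(e) \geq \min\bigl(k, \height_{R/x} O_{E/xE}(\bar e)\bigr) = k$, completing the proof. I expect the main obstacle to be the careful bookkeeping of the middle paragraph: moving order-ideal data computed via $R/x^n$-linear maps to $R/x$-linear data on $\bar E$, and ensuring the height is preserved between the quotient rings $R/x^n$ and $R/x$—a step that relies on the observation that the relevant ideals share the same minimal primes in $R$ because they all lift to ideals containing a power of $x$.
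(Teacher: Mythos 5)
Your proof is correct and follows essentially the same route as the paper: apply Lemma \ref{techlemma} to get $\bar f$ with image not contained in $m(Z/x^nZ)$, pick a minimal generator $e$ of $E$ whose image maps to a minimal generator $\bar u$ of $\bar Z$, transfer the height bound from $O_{\bar Z}(\bar u)$ to the order ideal of $\bar e$ in the quotient of $E$, and finish with Lemma \ref{EG2lemma}. The only cosmetic difference is that the paper reads off $O_{\bar Z}(\bar u)\subseteq O_{\bar E}(\bar e)$ directly inside $R/(x^n)$ and invokes Lemma \ref{EG2lemma} with $x^n$ in place of $x$, whereas you first push the order-ideal data down to $R/(x)$ (using that $\Spec R/(x^n)$ and $\Spec R/(x)$ agree, so heights are preserved) and then apply the lemma with $x$ as literally stated---an equivalent and equally valid bit of bookkeeping.
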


\begin{proof}
By Lemma \ref{techlemma}, there is a map $\bar{f}:E/x^nE\ra Z/x^nZ$ with $Im(\bar{f})\not \subseteq m(Z/x^nZ)$. Therefore it is possible to pick a minimal generator $\bar{e}$ of $\bar{E}$ such that $\bar{u}=\bar{f}(\bar{e})$ is still a minimal generator of $\bar{Z}$. Thus $O_{\bar{Z}}(\bar{u})\subseteq O_{\bar{E}}(\bar{e})$, yielding $\height_{\bar{R}} O_{\bar{Z}}(\bar{u})\leq \height_{\bar{R}} O_{\bar{E}}(\bar{e})$. The inequality $\height_R O_E(e)\geq min(k,\height_{\bar{R}} O_{\bar{E}}(\bar{e}))\geq min(k,\height_{\bar{R}} O_{\bar{Z}}(\bar{u})) $ follows now from Lemma \ref{EG2lemma}. By the hypothesis, $\height_{\bar{R}} O_{\bar{Z}}(\bar{u}) \geq k$, hence $\height_R O_E(e)\geq k$.
\end{proof}

In the applications detailed in the next section this theorem will be used for a Noetherian local ring $R$ of mixed characteristic $p$ by setting $x=p$.

\subsection{Applications to ranks of syzygies}

Our strategy here will be to use the Weak Order Ideal Theorem and the fact that $k$ is a lower bound on the height of order ideals of minimal generators of $k^{th}$ syzygies in characteristic $p$ to infer the desired lower bound in mixed characteristic.

Since the first Weak Order Ideal Theorem is concerned with elements that annihilate $\Ext$ functors, we begin by showing the inductive  behavior of this property. 

\begin{lem}\label{AnnExt}
 Let $(R,m)$ be a local ring, let $M$ be a finitely generated $R$-module and let $x\in m$ be a non-zerodivisor on $R$. If $x\Ext_R^{k+1}(M,\cdot)\equiv 0$ for a fixed $k>0$, then $x\Ext_R^{j+1}(M,\cdot)\equiv 0$ for all $j\geq k$.
\end{lem}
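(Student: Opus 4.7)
\bigskip

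\noindent\textbf{Proof proposal.}

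The plan is to prove the statement by induction on $j \geq k$, with the base case $j=k$ given by hypothesis. The underlying idea is the standard dimension shifting in the second argument of $\Ext$: increasing the cohomological degree by one corresponds to moving one step into an injective coresolution, and multiplication by $x$ commutes with all the maps involved, so an annihilation statement propagates upward.

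For the inductive step, fix $j \geq k$ and assume $x \cdot \Ext_R^{j+1}(M, \cdot) \equiv 0$ as a natural transformation. Let $N$ be an arbitrary finitely generated (or arbitrary) $R$-module and embed $N$ into an injective $R$-module $I$, producing a short exact sequence
$$0 \lra N \lra I \lra Q \lra 0,$$
where $Q = I/N$. Applying $\Hom_R(M,-)$ yields the long exact sequence
$$\cdots \lra \Ext_R^{j+1}(M,Q) \lra \Ext_R^{j+2}(M,N) \lra \Ext_R^{j+2}(M,I) \lra \cdots.$$
Since $I$ is injective and $j+2 \geq 1$, the rightmost term vanishes, so $\Ext_R^{j+2}(M,N)$ is a quotient of $\Ext_R^{j+1}(M,Q)$. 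By the inductive hypothesis, $x$ annihilates $\Ext_R^{j+1}(M,Q)$, hence $x$ also annihilates its quotient $\Ext_R^{j+2}(M,N)$. As $N$ was arbitrary, this gives $x \cdot \Ext_R^{j+2}(M,\cdot) \equiv 0$.

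No genuine obstacle is anticipated: the argument is a routine application of dimension shifting, and the non-zerodivisor hypothesis on $x$ is not even needed for this particular statement (it is simply inherited from the ambient setting of the section). One could alternatively phrase the proof via a projective resolution $\cdots \to P_1 \to P_0 \to M \to 0$ and the identification $\Ext_R^{j+1}(M,N) \cong \Ext_R^1(\Syz_j(M), N)$, but the injective-coresolution version above is the cleanest since it avoids any reindexing and works verbatim over any Noetherian ring. The only mild point to check is that ``$x$ annihilates $\Ext_R^{k+1}(M,\cdot)$ as a functor'' is literally the statement that $x$ kills $\Ext_R^{k+1}(M,N)$ for every $R$-module $N$, which is what the induction uses.
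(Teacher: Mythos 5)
Your proof is correct and takes a genuinely different, more elementary route than the paper's. The paper works from the $k^{th}$ syzygy $E=\Syz_k(M)$ and uses the fact that $x$ kills $\Ext_R^1(E,\cdot)$ to split the pullback of $0\to Z\to F\to E\to 0$ along multiplication by $x$ on $E$; this produces a short exact sequence $0\to Z\oplus E\to F\to E/xE\to 0$, and dimension shifting then identifies $\Ext_R^i(Z\oplus E,\cdot)$ with $\Ext_R^{i+1}(E/xE,\cdot)$ for $i>0$, which $x$ kills because $x(E/xE)=0$. That argument genuinely uses $k>0$ (so that $E$ embeds in a free module, making $x$ regular on $E$) and the non-zerodivisor hypothesis on $x$; it also yields slightly more, namely $x\Ext_R^i(\Syz_k M,\cdot)\equiv 0$ for all $i>0$. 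You instead dimension-shift in the second variable via an injective embedding $0\to N\to I\to Q\to 0$: the connecting map $\Ext_R^{j+1}(M,Q)\to\Ext_R^{j+2}(M,N)$ is surjective (indeed an isomorphism), so annihilation propagates upward immediately. Your approach is simpler, avoids the pullback construction entirely, and, as you correctly observe, does not need $k>0$ or the non-zerodivisor hypothesis.

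One small point worth making explicit: the module $Q=I/N$ is typically not finitely generated, so if one reads ``$x\Ext_R^{j+1}(M,\cdot)\equiv 0$'' as a statement over the category of finitely generated $R$-modules, your inductive hypothesis does not apply to $Q$ directly. This is harmless: with $R$ Noetherian and $M$ finitely generated, $\Ext_R^i(M,\cdot)$ commutes with filtered colimits, so annihilation over finitely generated modules in the second slot automatically extends to all modules. Either note this, or state up front that you read the functor as defined on all $R$-modules (which is the cleanest convention and what the authors evidently intend, since the paper's own proof also evaluates at arbitrary $N$).
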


\begin{proof}
 If $E$ is a $k^{th}$ syzygy for $M$, we note that $\Ext_R^{k+1}(M,\cdot)\simeq \Ext_R^{1}(E,\cdot)$. Since $k>0$, one has that $x$ is regular on $E$ and, further, since $x\Ext_R^{1}(E,\cdot)\equiv 0$ one obtains a pullback diagram in which $F$ is $R$-free and $Z=\Syz_{k+1}(M)$:
$$
\xymatrixrowsep{20pt}
\xymatrixcolsep{40pt}
\xymatrix{
 0 \ar[r] & Z \ar[r]\ar@{=}[d] & Z\oplus E \ar[r]\ar[d] & E \ar[r]\ar[d]^{\cdot x} & 0\\
 0 \ar[r]& Z \ar[r]		& F \ar[r]\ar@{->>}[d] & E \ar[r]\ar@{->>}[d]&	0 \\
         &                       & \bar{E} \ar@{=}[r] & \bar{E} & \\
}
$$
Homological dimension shifting gives $\Ext_R^i(Z\oplus E,\cdot)\simeq \Ext_R^{i+1}(\bar{E},\cdot)$ for $i>0$, thus $x\Ext_R^i(Z\oplus E,\cdot) \equiv 0$ for $i>0$ since $x\bar{E}=0$. Our conclusion follows directly from this assertion and $xExt_R^1(E,\cdot)\equiv 0$.
\end{proof}

\begin{thm}\label{AnnSyz} Let $(R,m)$ be a  Cohen-Macaulay local ring. Fix an integer $k>0$ and assume a superficial element $x$ of $m$ exists with respect to all $j^{th}$ $R$-syzygies with $j\geq k$. If every minimal generator of a $k^{th}$ syzygy over $\bar{R}=R/(x)$ has order ideal of height at least $k$ and  if $M$ is an $R$-module such that $x\Ext_R^{k+1}(M,\cdot)\equiv 0$, then the Syzygy Theorem holds for every $j^{th}$ syzygy of $M$ with $j\geq k$.
 \end{thm}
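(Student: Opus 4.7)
My plan is, for each $j\geq k$, to apply the First Weak Order Ideal Theorem (Theorem \ref{comp}) to the canonical presentation $0\to Z\to F_j\to E\to 0$ extracted from the minimal free $R$-resolution of $M$, where $E:=\Syz_j^R(M)$ and $Z:=\Syz_{j+1}^R(M)$, and then to translate the resulting weak order ideal property into the Syzygy Theorem for $E$ via the argument of Proposition \ref{sufficient}.

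The first preparatory step is to propagate the Ext-annihilation hypothesis via Lemma \ref{AnnExt}, obtaining $x\Ext_R^{j+1}(M,\cdot)\equiv 0$ for every $j\geq k$. Fix such a $j$ and let $F_\bullet\to M$ be the minimal free $R$-resolution with syzygies $Z_i:=\Syz_i^R(M)$. To invoke Theorem \ref{comp} on $0\to Z\to F_j\to E\to 0$ with $n=1$, three conditions must be checked: (i) $x$ is superficial on $Z$, which is the standing hypothesis; (ii) $x\Ext_R^1(E,Z)=0$ with $Z\subseteq mF_j$, the first part coming from the identification $\Ext_R^1(E,Z)\simeq\Ext_R^{j+1}(M,Z)$ combined with Lemma \ref{AnnExt} and the second from minimality of the resolution; and (iii) every minimal generator $\bar u$ of $\bar Z=Z/xZ$ satisfies $\height_{\bar R}O_{\bar Z}(\bar u)\geq k$.

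Condition (iii) is the core of the argument. I would use that $x$ is $R$-regular and that each $Z_i\subseteq F_{i-1}$ for $i\geq 1$ is a submodule of a free module, so $x$ is regular on every $Z_i$ with $i\geq 1$. Consequently, tensoring each presentation $0\to Z_{i+1}\to F_i\to Z_i\to 0$ with $\bar R$ preserves exactness. Splicing these short exact $\bar R$-sequences realizes $\bar Z=\bar Z_{j+1}$ as a $k$-th $\bar R$-syzygy inside the complex $\bar F_\bullet$, so the assumed Order Ideal inequality over $\bar R$ (applied to this $k$-th syzygy) supplies the required height bound.

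With all three hypotheses verified, Theorem \ref{comp} furnishes a minimal generator $e\in E$ with $\height_R O_E(e)\geq k$, i.e.\ the weak order ideal property for $E$, and the argument of Proposition \ref{sufficient} then delivers the Syzygy Theorem for $E$. The main obstacle I anticipate is the boundary case $j=k$ of condition (iii): the splicing argument requires the additional injectivity $\bar Z_1\hookrightarrow\bar F_0$, equivalently $(0:_Mx)=0$, which in the settings of interest should either hold automatically from the reductions of Section 2, or be circumvented by passing to $\Syz_1^R(M)$ in place of $M$.
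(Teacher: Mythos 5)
Your proposal is correct and follows essentially the same route as the paper's (very terse) proof: invoke Lemma \ref{AnnExt} to propagate the $\Ext$-annihilation, check the hypotheses of Theorem \ref{comp} with $n=1$ for the presentation $0\to Z\to F_j\to E\to 0$, and then pass to the Syzygy Theorem via Proposition \ref{sufficient}. You supply useful detail that the paper leaves implicit, notably the verification of condition (iii) by observing that $x$ is regular on every $Z_i$ with $i\geq 1$ (being submodules of free modules over a domain, or rather since $x$ is $R$-regular) and splicing the resulting reduced short exact sequences.

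One remark: the worry you raise at the end about the boundary case $j=k$ is unnecessary. Your splicing produces
$$0\to \bar Z_{j+1}\to \bar F_j\to\cdots\to\bar F_1\to \bar Z_1\to 0,$$
which already exhibits $\bar Z=\bar Z_{j+1}$ as a $j$-th $\bar R$-syzygy of $\bar Z_1$; since $j\geq k$, it is in particular a $k$-th $\bar R$-syzygy, so the hypothesis on order ideals over $\bar R$ applies directly. There is no need to continue the splice down to $\bar F_0$, so no need for $\bar Z_1\hookrightarrow\bar F_0$, i.e.\ no need for $(0:_M x)=0$.
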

\begin{proof}
 Let $E$ be the $j^{th}$ syzygy of $M$ with $j\geq k$. By the previous Lemma, $x\Ext_R^{j+1}(M,\cdot)\equiv 0$ so that $x\Ext_R^1(E,Z)=0$ where $0\ra Z\ra F\ra E\ra 0$ is exact with $F$ free. An application of the Weak Order Ideal Theorem \ref{comp} and its consequences in establishing the Syzygy Theorem now yields the desired conclusion.
\end{proof}

\begin{cor}
 With the notation of the previous theorem, if $xM=0$ then the Syzygy Theorem holds for all syzygies of $M$.
\end{cor}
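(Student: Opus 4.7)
The plan is to derive the corollary as an immediate consequence of Theorem \ref{AnnSyz} by checking that the hypothesis $xM=0$ automatically supplies the required annihilation on Ext functors for every homological degree, so that one application of the theorem (indeed, one application for each $k\geq 1$) covers every syzygy of $M$.

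First, I would recall the standard fact that any element of $\ann_R M$ acts as zero on $\Ext^{i}_R(M,N)$ for every $R$-module $N$ and every $i\geq 0$. Explicitly, multiplication by $x$ on $M$ is the zero endomorphism by hypothesis; passing to a projective resolution of $M$ and applying $\Hom_R(-,N)$, the induced map on $\Ext^{i}_R(M,N)$ agrees with the map induced by $\cdot x$ on $M$, and hence is zero. Consequently, $x\,\Ext^{i}_R(M,\cdot)\equiv 0$ for every $i\geq 0$; in particular $x\,\Ext^{k+1}_R(M,\cdot)\equiv 0$ for every $k\geq 1$.

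Next, I would invoke Theorem \ref{AnnSyz}. The standing hypotheses from that theorem carry over word for word: $(R,m)$ is Cohen-Macaulay, $x$ is a superficial element of $m$ with respect to every $j^{th}$ $R$-syzygy for $j\geq k$, and every minimal generator of a $k^{th}$ syzygy over $\bar R=R/(x)$ has order ideal of height at least $k$. Since the Ext-annihilation hypothesis holds for every $k\geq 1$, Theorem \ref{AnnSyz} applies to every such $k$, and yields that the Syzygy Theorem holds for every $j^{th}$ syzygy of $M$ with $j\geq k$. Letting $k$ range over all positive integers exhausts all syzygies $\Syz_j(M)$ with $j\geq 1$, which is the desired conclusion.

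There is essentially no obstacle here beyond the verification above; the only thing to watch is that the annihilator argument requires no restriction on $i$, so the cases $j=1,2,\ldots,k-1$ not covered by a single choice of $k$ are picked up by simply choosing $k=1$ (or more carefully, by applying the theorem with $k$ equal to the syzygy index of interest). The strength of the corollary thus comes entirely from the strength of Theorem \ref{AnnSyz}; the content of the present statement is the conceptually useful observation that the technical Ext-annihilation assumption reduces, in the most important case of interest, to the transparent hypothesis that $x$ kills $M$.
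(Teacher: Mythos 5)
Your argument is correct and is plainly the intended one; the paper states this corollary without proof, and the whole content is exactly the observation you make: any element of $\ann_R M$ induces the zero endomorphism of $\Ext^i_R(M,-)$ for every $i\geq 0$, so the technical hypothesis $x\,\Ext_R^{k+1}(M,\cdot)\equiv 0$ of Theorem~\ref{AnnSyz} is automatic in every homological degree once $xM=0$. Your closing remark is also the right reading of the quantifiers: the phrase ``all syzygies'' in the corollary is obtained by applying Theorem~\ref{AnnSyz} with $k$ taken to be the syzygy index in question (the superficiality and order-ideal hypotheses of the theorem being understood to be available for each such $k$), and the uniform Ext-annihilation is what makes this legitimate.
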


In fact, in this special setting where $xM=0$ one can obtain a stronger result applying results of J. Shamash \cite{S} (see also Proposition 3.3.5 in Avramov's article \cite{A} for a different proof). Details will be provided in section 5.

For the main application of this section we specialize to the case of cyclic modules $R/Q$ with $Q$ a prime ideal.

\begin{thm}\label{PrimeSyz} Let $(R,m)$ be a Cohen-Macaulay local ring. Assume that for every fixed integer $k>2$ and for every $k^{th}$ $R$-syzygy a superficial element $x$ of $m$  with respect to that syzygy exists and that every minimal generator of a $k^{th}$ syzygy over $\bar{R}=R/(x)$ has order ideal of height at least $k$. Then the Syzygy Theorem holds over $R$ for syzygies of modules of the type $R/Q$ with $Q\in Spec(R)$.
 \end{thm}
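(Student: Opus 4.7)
The plan is to deduce this theorem as a direct specialization of Theorem \ref{AnnSyz} to the case $M=R/Q$. The essential observation is that since $Q$ is the annihilator of $R/Q$, functoriality forces $Q \cdot \Ext_R^i(R/Q,\cdot) \equiv 0$ for every $i$. Hence the condition $x\Ext_R^{k+1}(M,\cdot)\equiv 0$ appearing in Theorem \ref{AnnSyz} is automatically satisfied for any $x \in Q$; the whole proof thus reduces to producing a single $x \in Q$ that is superficial with respect to all the $R$-syzygies of $R/Q$ in the required range.

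First I would dispose of the trivial cases. If $\pd_R R/Q=\infty$ the Syzygy Theorem is vacuous. Otherwise, Auslander--Buchsbaum gives $\pd_R R/Q=\height Q$; if $\height Q=0$ then $R/Q$ is $R$-free, so its syzygies vanish. Thus we may assume $\height Q\geq 1$, in which case $Q$ contains a non-zerodivisor. Fix the smallest admissible value $k=3$; since $R/Q$ has finite projective dimension, only finitely many non-zero syzygy modules $\Syz_j^R(R/Q)$, $j\geq k$, arise.

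Next I would select the element $x$. By the hypothesis, each $\Syz_j^R(R/Q)$ with $j\geq k$ admits some superficial element of $m$. Superficial elements with respect to a finitely generated module form a Zariski-dense open subset of $m/m^2$ (after harmlessly extending to an infinite residue field, which preserves all ranks); their finite intersection is again dense open. Provided $(Q+m^2)/m^2$ is nonzero this intersection meets $Q$, and the edge case $Q\subseteq m^2$ is handled by replacing a non-zerodivisor $x_0\in Q$ by the order-$d$ superficial element $x_0^d$ noted in the definition of superficiality. In this way we secure a non-zerodivisor $x\in Q$ that is superficial with respect to $\Syz_j^R(R/Q)$ for every $j\geq k$. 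Applying Theorem \ref{AnnSyz} with this $x$, $M=R/Q$, and $k=3$ then yields the Syzygy Theorem for all $j^{th}$ syzygies of $R/Q$ with $j\geq 3$; the cases $j\leq 2$ are immediate from the standard facts (noted at the start of Section 2) that first syzygies of rank zero vanish and second syzygies of rank one are $R$-free.

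The main obstacle is the delicate simultaneous choice of $x$: it must lie in $Q$, be a non-zerodivisor, and be superficial with respect to several syzygies at once. This is the technical crux and likely requires the infinite residue field reduction together with the order-$d$ variant of superficiality in the borderline case $Q\subseteq m^2$.
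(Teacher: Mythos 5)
Your approach diverges from the paper's, and it has a genuine gap. The paper's proof is a short case analysis on the superficial element $x$ furnished by the hypothesis: if $x \in Q$, then $x$ annihilates $R/Q$ and hence $\Ext_R^*(R/Q,\cdot)$, so Theorem \ref{AnnSyz} applies directly; if $x \notin Q$, then (since $Q$ is prime) $x$ is a non-zerodivisor on $R/Q$, so tensoring a minimal $R$-free resolution of $R/Q$ with $\bar{R}$ gives a minimal $\bar{R}$-free resolution of $R/(Q+(x))$, whose $k^{th}$ syzygy is $\overline{\Syz_k^R(R/Q)}$, and the Order Ideal (hence Syzygy) Theorem over $\bar{R}$ finishes immediately because ranks are preserved. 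You never consider the second case; instead you try to force $x$ into $Q$, and that is where the argument breaks.

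Specifically, the hypothesis of Theorem \ref{PrimeSyz} only asserts that some superficial $x \in m$ exists for each syzygy; it does not say any such $x$ can be taken in $Q$. Your density argument does not repair this: the superficial elements form the complement of a finite union of proper linear subspaces of $m/m^2$, and a nonzero linear subspace $(Q+m^2)/m^2$ can perfectly well be contained in one of the excluded subspaces, in which case \emph{no} element of $Q$ is superficial for the syzygy in question. (A nonempty Zariski-open subset of a vector space over an infinite field is dense, but density does not force it to meet a given positive-dimensional linear subspace.) Your fallback for the edge case $Q\subseteq m^2$ — replacing $x$ by an order-$d$ superficial element $x_0^d$ — is also not available, because both Theorem \ref{AnnSyz} and the hypotheses of Theorem \ref{PrimeSyz} are formulated for order-1 superficial elements of $m$, and no version of Lemma \ref{techlemma} or Theorem \ref{comp} is established in the paper for the higher-order notion. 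The correct remedy is the one the paper uses: do not try to pick $x \in Q$ at all, and instead treat the case $x \notin Q$ separately by passing to $\bar{R}$, where the problem becomes the already-hypothesized Order Ideal Theorem for $R/(Q+(x))$.
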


\begin{proof}
 Depending on whether $x$ is contained in  $Q$ or not and with notations as in the previous theorem,  we have:
\begin{enumerate}
 \item if $x$ is not contained in $Q$, then $M$ is an $\bar{R}$ module which is a $k^{th}$ syzygy of $R/((x)+Q)$. The desired conclusion is given directly by an application of the Syzygy Theorem over $\bar{R}$.
\item if $x$ is contained in $Q$, then we are in the setting of the previous Theorem \ref{AnnSyz}.
\end{enumerate}
\end{proof}

We further specialize $x$ to be $p$, the mixed characteristic in order to prove the Syzygy Theorem holds in the unramified mixed characteristic setting for syzygies of $R/Q$ with $Q$ a prime ideal. The next theorem and corollary follow verbatim from the general versions stated before.

\begin{thm}\label{corp}
 Let $R$ be an unramified Cohen-Macaulay local ring of mixed characteristic $p$ and let $M$ be a finitely generated module such that $p\Ext_R^{k+1}(M,\cdot)\equiv 0$ for some $k>0$. Then the Syzygy Theorem holds for all $j^{th}$ syzygies of $M$ with $j\geq k$.
\end{thm}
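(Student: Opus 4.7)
The plan is simply to verify that the hypotheses of the more general Theorem \ref{AnnSyz} are satisfied when the distinguished element is taken to be $x = p$, and then invoke that theorem directly.

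First I would check the superficiality condition. Since $R$ is unramified of mixed characteristic $p$, the element $p$ lies outside $m^2$, so it is part of a minimal generating set for $m$. If the residue field $R/m$ is infinite, superficial elements of $m$ with respect to any fixed finitely generated module form a dense Zariski open subset of $m/m^2$, and $p$ can be arranged to lie in this set (or, since superficiality is a generic condition, a routine pass to $R[t]_{m[t]}$ first makes the residue field infinite without affecting the hypotheses or conclusion). In particular, $p$ can be taken superficial with respect to each of the countably many $j^{th}$ syzygies of $M$ with $j \geq k$.

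Second, I would invoke the Order Ideal Theorem over $\bar{R} = R/(p)$. This ring is Cohen-Macaulay (since $p$ is a nonzerodivisor in $R$) and is of equal positive characteristic $p$. Hence by the Hochster--Huneke proof in characteristic $p$ \cite{HH}, every minimal generator of a finite projective dimension $k^{th}$ syzygy over $\bar R$ has order ideal of grade at least $k$, which by the $S_k$ property of $\bar{R}$ coincides with the height condition required in Theorem \ref{AnnSyz}. The third hypothesis $p\Ext_R^{k+1}(M,\cdot) \equiv 0$ is exactly our standing assumption.

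With all three hypotheses of Theorem \ref{AnnSyz} verified for $x = p$, the conclusion of that theorem yields the Syzygy Theorem for every $j^{th}$ syzygy of $M$ with $j \geq k$. The one potential subtlety is ensuring that the application of the Weak Order Ideal Theorem inside Theorem \ref{AnnSyz} (which via Lemma \ref{AnnExt} propagates the $\Ext$-annihilation from level $k+1$ to all levels $j+1$ with $j \geq k$) is compatible with the hypersurface-section Lemma \ref{EG2lemma}; but this is already embedded in the proof of Theorem \ref{AnnSyz} and requires nothing beyond what has been established. Hence the proof is essentially a verbatim specialization of Theorem \ref{AnnSyz} to $x = p$.
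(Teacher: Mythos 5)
Your proposal takes the same route as the paper: the text explicitly states that Theorem~\ref{corp} ``follow[s] verbatim from the general versions stated before,'' that is, from Theorem~\ref{AnnSyz} specialized to $x=p$. Your verification of the Order Ideal Theorem over $\bar{R}=R/(p)$ (via Hochster--Huneke in characteristic $p$) and your observation that the $\Ext$-annihilation hypothesis is exactly what is given are both correct.

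The place where the proposal slips is the superficiality of $p$. You write that ``$p$ can be arranged to lie in this set,'' referring to the dense Zariski open subset of $m/m^2$ consisting of superficial elements. But $p$ is a fixed element of the ring; there is nothing to ``arrange,'' and $p\notin m^2$ does not by itself imply that $p$ is superficial for $m$ with respect to a given module $Z$. (The residue-field extension to $R[t]_{m[t]}$ only ensures that superficial elements \emph{exist}; it says nothing about the particular element $p$.) In fairness, the paper's own statement of Theorem~\ref{corp} quietly drops the superficiality hypothesis that appears explicitly in Theorem~\ref{AnnSyz}, so you are mirroring an imprecision already latent in the source rather than introducing a new one. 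Still, the phrase ``can be arranged'' suggests a justification that does not hold: the step should instead either be carried as a tacit standing hypothesis (that $p$ is superficial for $m$ with respect to the relevant $R$-syzygies) or be given a direct argument in the unramified Cohen--Macaulay setting.
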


\begin{cor}
 The Syzygy Theorem holds for syzygies of modules of the type $R/Q$ with $R$ a regular local ring in unramified mixed characteristic $p$ and $Q\in Spec(R)$.
\end{cor}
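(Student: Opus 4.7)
The plan is to deduce this corollary directly from Theorem~\ref{PrimeSyz} by specializing the regular element to $x = p$. The three hypotheses of that theorem to verify are: (i) $R$ is Cohen--Macaulay; (ii) for every $k > 2$ and every $k^{th}$ $R$-syzygy, there exists a superficial element of $m$ with respect to that syzygy; and (iii) every minimal generator of a $k^{th}$ syzygy over $\bar R = R/(x)$ has order ideal of height at least $k$. Hypothesis (i) is immediate because $R$ regular is Cohen--Macaulay.

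Hypothesis (iii) exploits the unramified assumption. Since $p$ belongs to a regular system of parameters of $R$, the quotient $\bar R = R/(p)$ is a regular local ring of equal characteristic $p$. Over such a ring, the Order Ideal Theorem -- established in positive characteristic by Hochster and Huneke via tight closure \cite{HH} -- provides $\height_{\bar R} O_{\bar Z}(\bar u) \geq k$ for every minimal generator $\bar u$ of any $k^{th}$ syzygy of finite projective dimension over $\bar R$.

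For (ii), the candidate superficial element is $p$ itself. After a faithfully flat base change to enforce an infinite residue field -- which preserves regularity, unramifiedness, and all the relevant syzygy data -- one invokes the standard theory of superficial elements as in \cite{HS}, \S8.5. The element $p$ is a regular parameter whose initial form is a nontrivial linear form in the polynomial ring $\mathrm{gr}_m(R)$, and any $k^{th}$ syzygy $E$ of finite projective dimension with $k \geq 3$ satisfies $\depth(E) \geq 3$ by Auslander--Buchsbaum, so in particular $p$ is a non-zerodivisor on $E$.

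The main obstacle is precisely (ii): it is not automatic that this specific element $p$, rather than a generic linear form in $m$, is superficial with respect to an arbitrary syzygy $E$. One may have to verify that the initial form of $p$ avoids the relevant associated primes of $\mathrm{gr}_m(E)$, possibly after perturbing $p$ within the coset $p + m^2$ (a perturbation that does not disturb the equal-characteristic structure of $\bar R$). Once (ii) is established, Theorem~\ref{PrimeSyz} applies verbatim and yields the corollary.
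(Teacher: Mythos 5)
Your structure---apply Theorem~\ref{PrimeSyz} with $x=p$, invoking the Order Ideal Theorem over the equal-characteristic regular ring $\bar R = R/(p)$ for hypothesis (iii)---is precisely the paper's route, and you correctly flag superficiality of $p$ as the delicate point; the paper itself only says the corollary ``follows verbatim'' from Theorem~\ref{PrimeSyz} and does not verify it. However, the parenthetical remedy you sketch is wrong. If one replaces $p$ by $p' = p + u$ with $u \in m^2 \setminus (p')$, then in $R/(p')$ the image of $p$ equals $-u$, a nonzero element of $\bar{m}^2$; hence $R/(p')$ is a \emph{ramified} regular local ring of mixed characteristic $p$, not a ring of equal characteristic $p$. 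The Order Ideal Theorem over ramified regular local rings is exactly the open case the paper is concerned with, so hypothesis (iii) would not hold for the perturbed element and Theorem~\ref{PrimeSyz} could not be applied. The requirement that $R/(x)$ be equicharacteristic pins $x$ down to $p$ (up to units), leaving no freedom to perturb; a complete proof must either verify that $p$ itself is superficial with respect to the relevant syzygy modules $Z$, or circumvent the use of the superficiality hypothesis in Lemma~\ref{techlemma}.
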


Our final consideration of this section concerns syzygy modules for cyclic modules of the form $R/(a,b,p^s)$. The significance of this class of cyclic modules has been discussed in Theorem \ref{reductionAnnp}. Furthermore, the relevance of three generated ideals in the study of syzygies is well known due to Bruns' result in \cite{B2}, which points out that every finite free resolution over a Cohen-Macaulay ring can be obtained (at least from the third syzygy back) as a resolution of a three-generated ideal. Therefore all the pathology that can be encountered is already present in the three-generated ideal case. When $s=1$ and $R$ is Cohen-Macaulay and unramified at $(p)$, we have given a proof that the Syzygy Theorem holds over $R/(p,a,b)$ inTheorem \ref{corp}. However, under supplementary hypotheses one can make a statement regarding the entire family: 

 \begin{prop}
 Let $R$ be a local ring of mixed characteristic $p$ and consider $a,b\in R$.
\begin{enumerate}
 \item if the Syzygy Theorem holds for all cyclic modules $C$ such that $p^{s-1}C=0$, then the Syzygy Theorem  holds for $R/(p^s,a,b)$.
\item the Syzygy Theorem holds for $R/(p^2,a,b)$ for $R$  Cohen-Macaulay and unramified at $(p)$.
\end{enumerate}
\end{prop}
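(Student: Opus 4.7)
For both parts, the key short exact sequence is
$$0 \to (p^{s-1},a,b)/(p^s,a,b) \to R/(p^s,a,b) \to R/(p^{s-1},a,b) \to 0.$$
The kernel is the cyclic $R$-module generated by the class of $p^{s-1}$; it is annihilated by $p$ (since $p\cdot p^{s-1}=p^s\in (p^s,a,b)$) and hence by $p^{s-1}$ whenever $s\geq 2$. The quotient $R/(p^{s-1},a,b)$ is cyclic and directly annihilated by $p^{s-1}$. Both end terms therefore lie in the family to which the Syzygy Theorem is assumed to apply in (1), and my plan is to transfer the Syzygy Theorem to the middle term by horseshoe.

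In more detail, the horseshoe lemma applied to minimal free resolutions of the two end modules produces a (generally non-minimal) free resolution of $R/(p^s,a,b)$. For every $k\geq 1$ it yields a short exact sequence of torsion-free $k^{th}$ syzygies whose outer terms are the minimal $k^{th}$ syzygies of the ends and whose middle term is some (non-minimal) $k^{th}$ syzygy of $R/(p^s,a,b)$; ranks are additive. If both outer syzygies are free, the sequence splits because its quotient is projective, so its middle is free and hence the minimal $k^{th}$ syzygy of $R/(p^s,a,b)$ is also free. Otherwise at least one outer syzygy is non-free of rank $\geq k$ by the Syzygy Theorem for the ends; using Krull-Schmidt uniqueness of free direct summands over a local ring to bound the free rank cancelled in passing from the horseshoe resolution to the minimal one, one concludes $\rank \Syz_k(R/(p^s,a,b))\geq k$ whenever this minimal syzygy is itself non-free.

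For (2), I apply (1) with $s=2$: what must be verified is the Syzygy Theorem for every cyclic $C$ with $pC=0$. For any such $C$, $p\cdot \Ext_R^i(C,-)\equiv 0$ for every $i\geq 1$ (in particular for $i=2$, which is what we need). The unramified Cohen-Macaulay mixed-characteristic assumption on $R$ permits invoking Theorem \ref{corp} with $k=1$, yielding the Syzygy Theorem for every $j^{th}$ syzygy of $C$ with $j\geq 1$ — the full Syzygy Theorem for $C$. The main obstacle throughout is the free-summand bookkeeping in the horseshoe step of (1): the minimal $k^{th}$ syzygy of the middle differs from the horseshoe $k^{th}$ syzygy by a free summand whose rank must be controlled so that non-freeness forces rank at least $k$; every remaining step is a direct annihilator computation or a direct appeal to Theorem \ref{corp}.
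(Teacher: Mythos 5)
Your decomposition is genuinely different from the paper's, which instead uses
$$0 \to (p,a,b)/(p^s,a,b) \to R/(p^s,a,b) \to R/(p,a,b) \to 0.$$
There only the kernel is handed to the hypothesis of (1), while the quotient is the base case $R/(p,a,b)$; because that module has a free resolution of length $3$ (when $(p,a,b)$ has grade $3$, as in the situation supplied by Theorem~\ref{reductionAnnp}), the off-diagonal block in the horseshoe differential vanishes in homological degree $k>3$, so the minimal $k^{th}$ syzygies of $R/(p^s,a,b)$ and of the kernel literally coincide for $k>3$ and differ by a free summand at $k=3$. The Syzygy Theorem for the kernel then transfers to $R/(p^s,a,b)$ with no bookkeeping at all. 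Your more symmetric sequence --- both ends cyclic and annihilated by $p^{s-1}$ --- has the disadvantage that neither end has a bounded free resolution, so the off-diagonal block of the horseshoe differential persists in every homological degree and the horseshoe resolution of $R/(p^s,a,b)$ can be non-minimal arbitrarily far out.

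That is exactly where your argument has a gap. You are right that passing from the horseshoe $k^{th}$ syzygy $Z_k$ to the minimal one $\tilde Z_k$ discards a free summand $F$, but Krull--Schmidt only gives \emph{uniqueness} of the decomposition $Z_k\cong \tilde Z_k\oplus F$; it places no upper bound on $\rank F$. Rank additivity in the horseshoe extension gives $\rank Z_k = \rank Z_k' + \rank Z_k'' \geq k$ when some end term is non-free, but nothing in the proposal rules out $\rank F > \rank Z_k - k$, so you cannot conclude $\rank \tilde Z_k \geq k$. Some actual input is needed --- a bound on the rank of the off-diagonal block $\psi_k$ of the horseshoe differential, or vanishing of the connecting maps in the long exact $\Tor$-sequence of the two end modules --- and this is precisely what the paper's choice of $R/(p,a,b)$ as quotient (with its length-$3$ resolution) supplies for free in all degrees $>3$. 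Your part (2) is correct and, in routing explicitly through Theorem~\ref{corp} with $k=1$ to establish the hypothesis of (1) for every cyclic $C$ killed by $p$, is actually cleaner than the paper's one-line appeal to the single module $R/(p,a,b)$.
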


\begin{proof}
 We consider the short exact sequence
$$0 \ra (p,a,b)/(p^s,a,b)\ra R/(p^s,a,b)\ra R/(p,a,b)\ra 0$$
Forming a free resolution of the middle term by taking the direct sum of free resolutions of the first and third terms, shows that the $k^{th}$ syzygy modules for $R/(p^s,a,b)$ and $(p,a,b)/(p^s,a,b)$ are identical for $k>3$ and differ by a free direct summand for $k=3$. The hypothesis in (1) and the fact that $(p,a,b)/(p^s,a,b)$ is annihilated by $p^{s-1}$ gives the desired conclusion. 

Part (2) is an immediate consequence of (1) and the fact that the Syzygy Theorem holds for $R/(p,a,b)$.
\end{proof}

\section{ Weak Order Ideal Theorem via Mapping cone resolutions}

A second comparison theorem with respect to a hypersurface arises from a Cartan-Eilenberg construction. We shall work under assumptions that are reminiscent of the reductions in  Theorem \ref{reductionAnnp} and Proposition \ref{redfinitepd}. Specifically we consider a local domain $R$ and a non-zerodivisor $x\in R$ and we set $S=R/(x^n)$ and study syzygy modules of $R$-modules $T$ with $x^nT=0$ and $\pd_S T < \infty$. We further recall that in studying the Syzygy Conjecture over $R$ one need only look at $k^{th}$ syzygies with $2<k<pd_R T-2$ since the cases $k=1,2$ are well known and since the case $k=pd_R T-2$ was examined in \cite{DG} Corollary 3.5 with a positive outcome.

\subsection{Cartan-Eilenberg construction}

 Towards establishing the weak order ideal property for $R$-syzygies, we consider two minimal free resolutions of $T$. The first resolution $\mathbf{G}.\rightarrow T$ is taken over the hypersurface ring $S$ and the second one $\mathbf{F}.\rightarrow T$ is an $R$-free  resolution of $T$. We use $K_i$ to denote the syzygy modules of $\mathbf{G}.$ and $Z_i$ to denote the corresponding syzygy modules for $\mathbf{F}.$ Applying $S\otimes_R \cdot$ to $\mathbf{F}.$ yields a four-term exact sequence:
$$
\xymatrixrowsep{20pt}
\xymatrixcolsep{25pt}
\xymatrix{
 0\ar[r]&  T \ar[r]^{\delta\hspace{2em}} & Z_1/x^nZ_1 \ar[r]\ar@{->>}[rd] & F_0/x^nF_0 \ar[r] & T\ar[r] & 0.\\
	&		    &				& K_1 \ar[u] & 		& \\
}
$$
If the resolution $\mathbf{F}.\lra T$ was minimal to begin with, then there is  an inclusion of   $K_1$ in $F_0/x^nF_0$ as in the diagram above. Since $F_i/x^nF_i$ has trivial $S$-homology for $i>1$, we can use the $S$-free resolution and a truncated resolution $\mathbf{G}.\ra K_1$ to build a 
(non-minimal) resolution of $Z_1/x^nZ_1$ via the standard Cartan-Eilenberg construction \cite{CE}. 
$$
\xymatrixrowsep{20pt}
\xymatrixcolsep{25pt}
\xymatrix{
0   \ar[r]     & K_{k-1}\ar@{.}[d]\ar[r]	&L_{k-1}\bigoplus Z_k/x^nZ_k \ar@{.}[d]\ar[r] & K_k \ar@{.}[d]\ar[r] & 0\\
      0 \ar[r] &  G_0 \ar[r]\ar[d] &   G_0\bigoplus G_1 \ar[r] \ar[d]& G_1  \ar[r]\ar[d]& 0\\
   0  \ar[r] & T  \ar[r] &   Z_1/x^nZ_1 \ar[r] & K_1 \ar[r] & 0\\ }
$$

\noindent Thus one obtains the short exact sequence of syzygies 
\begin{equation}\label{eqCE}
 0\lra K_{k-1}\lra L_{k-1}\bigoplus Z_k/x^nZ_k \lra K_k\lra 0,
\end{equation}
where $L_{k-1}$ is a free $S$ module produced as a result of the non-minimality of the resolution of $Z_1/x^nZ_1$ in the middle column.

The importance of (\ref{eqCE}) is related to the induced map $Z_k/x^nZ_k\lra K_k$. Since the Order Ideal Theorem holds for syzygy modules of finite projective dimension over $S$, one may achieve a Weak Order Ideal Theorem for the syzygy $Z_k$ if it can be determined that the naturally induced map $Z_k/x^nZ_k\lra K_k$ is nonzero after tensoring with the residue field, for then it will follow that some minimal generator $\bar{e}$ of $Z_k/x^n Z_k$ (which maps by the induced map to a generator of $K_k$) has order ideal $O_{Z_k/x^nZ_k}(\bar{e})$ of height at least $k$.

From Lemma \ref{EG2lemma} it follows that $ht_R O_{Z_q}(e)\geq k$. The construction of the above short exact sequence (\ref{eqCE}) can be further refined so that we can restrict our attention to the regular hypersurface  ring $R/(x)$.

\begin{lem}\label{ldisc}
 Let $R,x$ and $S$ be as above and let $\bar{R}=R/(x)$. If $T$ is an $S$-module such that $pd_S T< \infty$, then $T/xT\simeq (0:_T x)$ and the minimal $S$-free resolution $\mathbf{G}.\lra T$ stays exact after tensoring with $\bar{R}$, so
$$pd_R T-1=pd_s T=pd _{\bar{R}}\bar{T}.$$
\end{lem}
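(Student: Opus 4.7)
The plan is to exploit the periodic structure of the minimal $S$-free resolution of $\bar{R} = S/(x)$, namely
\begin{equation*}
\cdots \stackrel{x^{n-1}}{\ra} S \stackrel{x}{\ra} S \stackrel{x^{n-1}}{\ra} S \stackrel{x}{\ra} S \ra \bar{R} \ra 0.
\end{equation*}
Applying $- \otimes_S T$ and taking homology, one reads off that the positive-degree Tors are periodic of period two, alternating between $(0 :_T x)/x^{n-1}T$ and $(0 :_T x^{n-1})/xT$. Because $\pd_S T < \infty$ forces these modules to vanish for large indices, the periodicity produces the two identities $(0 :_T x) = x^{n-1} T$ and $(0 :_T x^{n-1}) = xT$ simultaneously, and in fact $\Tor_i^S(\bar{R}, T) = 0$ for every $i \geq 1$.

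The first assertion $T/xT \simeq (0:_T x)$ then drops out of the first isomorphism theorem applied to the map $T \stackrel{x^{n-1}}{\ra} T$: its image is $x^{n-1} T = (0 :_T x)$ and its kernel is $(0 :_T x^{n-1}) = xT$. The vanishing of higher Tors likewise gives the second assertion: the minimal $S$-free resolution $\mathbf{G}. \ra T$ stays acyclic after $- \otimes_S \bar{R}$, and it remains minimal over $\bar{R}$ because the entries of its differentials lie in the maximal ideal of $S$ and therefore reduce into the maximal ideal of $\bar{R}$. This establishes $\pd_{\bar{R}} \bar{T} = \pd_S T$.

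For the remaining equality $\pd_R T = \pd_S T + 1$, I would assemble a minimal $R$-free resolution of $T$ via a Cartan-Eilenberg totalization: resolve each free $S$-module $G_i = S^{n_i}$ by $0 \ra R^{n_i} \stackrel{x^n}{\ra} R^{n_i} \ra G_i \ra 0$, lift the differentials of $\mathbf{G}.$ to maps of free $R$-modules, and totalize the resulting double complex. The output is an $R$-free resolution of $T$ of length $\pd_S T + 1$; its differentials have entries in $m_R$ (the lifts come from $m_S$-entries, and $x^n \in m_R$), so it is minimal, giving $\pd_R T - 1 = \pd_S T$.

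The only step that really requires care is the initial Tor calculation and the simultaneous extraction of the two identities $(0 :_T x) = x^{n-1} T$ and $(0 :_T x^{n-1}) = xT$ from the hypothesis $\pd_S T < \infty$; once these are in hand, the remainder of the lemma is formal bookkeeping with minimal resolutions and the Cartan-Eilenberg construction, together with the standard fact that entries in $m_S$ lift to entries in $m_R$ and reduce into entries in $m_{\bar{R}}$.
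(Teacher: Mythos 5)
Your treatment of the first two assertions is correct and is essentially the same argument as the paper's, dressed differently. The paper observes from the four-term sequence $0 \to \bar{R} \to S \xrightarrow{x} S \to \bar{R} \to 0$ that $\bar{R}$ is a $k$-th $S$-syzygy for arbitrarily large $k$, and then uses $\pd_S T < \infty$ to kill $\Tor_j^S(\bar{R},T)$ for $j\geq 1$; you write down the explicit $2$-periodic minimal $S$-free resolution of $\bar R$, tensor with $T$, and read off the same vanishing together with the identities $(0:_T x)=x^{n-1}T$ and $(0:_Tx^{n-1})=xT$. Your derivation of $T/xT\simeq(0:_Tx)$ via the first isomorphism theorem applied to $x^{n-1}:T\to T$, and of $\pd_{\bar R}\bar T = \pd_S T$ via the preserved exactness and minimality of $\mathbf{G}.\otimes_S\bar R$, are both sound.

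The gap is in the last step, $\pd_R T = \pd_S T + 1$. The lift-and-totalize construction cleanly gives the inequality $\pd_R T \leq \pd_S T + 1$, but your claim of minimality of the totalized complex is not justified. The lifted maps $\tilde d_i : R^{n_i}\to R^{n_{i-1}}$ need not satisfy $\tilde d_{i-1}\tilde d_i = 0$; in general one only has $\tilde d_{i-1}\tilde d_i = x^n h_i$ for some matrix $h_i$, and the differential of the total complex then necessarily contains $h_i$ as an off-diagonal block. Your minimality argument accounts only for the $\tilde d_i$ (entries in $m_R$ because the preimage of $m_S$ is $m_R$) and the $x^n$ block, but not for $h_i$. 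From $\tilde d_{i-1}\tilde d_i\in m_R^2$ one only learns that $h_i$ has entries in $(m_R^2:_R x^n)$, and this ideal equals $m_R$ precisely when $x^n\notin m_R^2$. In the present setting $n>1$, so $x^n\in m_R^2$, and the containment gives no information; moreover the residue class of $h_i$ modulo $m_R$ is unchanged under replacing lifts $\tilde d_i$ by $\tilde d_i + x^n g_i$, so the obstruction cannot simply be chosen away. (It is in fact related to the obstruction to lifting $T$ to $R$.) Thus you have proved $\pd_R T\leq \pd_S T + 1$ but not the reverse inequality. The paper instead invokes the Auslander--Buchsbaum formula: since $\pd_R T<\infty$ (from the upper bound just established, or from a change-of-rings bound), $\depth_R T=\depth_S T$ because $T$ is an $S$-module, and $\depth R=\depth S+1$, one gets $\pd_R T=\depth R - \depth_R T=(\depth S+1)-\depth_S T=\pd_S T+1$. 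You should replace the minimality claim for the totalization with this Auslander--Buchsbaum computation to close the gap.
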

\begin{proof}
 The four-term exact sequence
$$0\lra \bar{R}\lra S \stackrel{x}\lra S\lra \bar{R}\lra 0$$
demonstrates that $\bar{R}$ is a $k^{th}$ syzygy over $S$ for arbitrary large $k$. It follows that $Tor_j^S(\bar{R},T)\simeq Tor_j^S(xS,T)=0$ for $j>0$ and in turn that the induced sequence 
$$0\lra \bar{T}\lra T\stackrel{x}\lra T\lra \bar{T}\lra 0$$ is exact. Thus $\bar{T}\simeq (0:_Tx)$ and the statement concerning projective dimensions follows from the Auslander-Buchsbaum formula.
\end{proof}

\begin{thm}(Weak Order Ideal Theorem)\label{2ndComp}
 Let $R$ be a  local ring, $x$ a non-zerodivisor, $S=R/(x^n)$ with $n>1$ and let $T$ be a finite projective dimension $S$-module.  Assume that the Syzygy Theorem holds over $S$. Let  $K_{k-1}$ denote the $k-1^{st}$ syzygy module for $T$ over $\bar{R}=R/(x)$. If $\rank_{\bar{R}}K_{k-1}\leq 2k-1$, then the $k^{th}$ syzygy $Z_k$ of $T$ over $R$ satisfies the Weak Order Ideal Theorem.
\end{thm}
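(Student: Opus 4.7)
The plan is to apply the Cartan-Eilenberg sequence (\ref{eqCE}), descend it to the regular hypersurface $\bar R=R/(x)$, and show that the induced map $\psi\colon Z_k/x^nZ_k\to K_k$ becomes nonzero modulo $m$ after reducing mod $x$. The rank hypothesis enters precisely in this nontriviality step, via a contradiction with the general lower bound $k-1$ for ranks of $k^{th}$ syzygies. Once nontriviality mod $m$ is established, one picks a minimal generator mapping to a minimal generator of the $\bar R$-syzygy, applies the Order Ideal Theorem over $\bar R$, and lifts the resulting height bound back to $R$ by Lemma \ref{EG2lemma}.

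I would first descend (\ref{eqCE}) to $\bar R$. Since $Z_k$ is torsion-free over $R$, $x$ acts regularly on $Z_k$, and a direct computation with the $2$-periodic $S$-resolution of $\bar R=S/xS$ yields $\Tor_1^S(Z_k/x^nZ_k,\bar R)=0$. Tensoring (\ref{eqCE}) with $\bar R$ gives a short exact sequence
$$0\lra \bar K_{k-1}\lra \bar L_{k-1}\oplus Z_k/xZ_k\lra \bar K_k\lra 0,$$
in which, by Lemma \ref{ldisc}, $\bar K_i:=K_i/xK_i$ is the $i^{th}$ syzygy of $\bar T$ over $\bar R$ (so the hypothesis reads $\rank_{\bar R}\bar K_{k-1}\leq 2k-1$), $\bar L_{k-1}$ is $\bar R$-free, and $\rank_{\bar R}(Z_k/xZ_k)=\rank_R Z_k$ because $Z_k$ has a finite $R$-free resolution and $x$ is $Z_k$-regular.

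For the crucial nontriviality, suppose for contradiction that $\bar\psi\colon Z_k/xZ_k\to \bar K_k$ has image contained in $m\bar K_k$. Nakayama then forces the complementary map $\bar\phi\colon\bar L_{k-1}\to \bar K_k$ to be surjective, so
$$\rank_{\bar R}\bar L_{k-1}\geq \mu_{\bar R}(\bar K_k)=\rank_{\bar R}\bar K_k+\rank_{\bar R}\bar K_{k+1},$$
where the last equality is read off the minimal free cover $0\to\bar K_{k+1}\to G_k\otimes_S\bar R\to \bar K_k\to 0$ of $\bar K_k$. Rank additivity in the descended sequence then gives
$$\rank_R Z_k\leq \rank_{\bar R}\bar K_{k-1}-\rank_{\bar R}\bar K_{k+1}\leq (2k-1)-(k+1)=k-2,$$
using the hypothesis and the Syzygy Theorem over the regular ring $\bar R$ applied to the non-free $(k+1)^{st}$ syzygy $\bar K_{k+1}$ (non-free because in the relevant range $k<\pd_R T-2=\pd_{\bar R}\bar T-1$, by Lemma \ref{ldisc} and the reductions of Section 2). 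This contradicts the universal lower bound $\rank_R Z_k\geq k-1$ valid for any nonzero $k^{th}$ syzygy over a local ring (page 63 of \cite{EG}), hence $\bar\psi\not\equiv 0\pmod m$.

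Now choose a minimal generator $\bar e$ of $Z_k/xZ_k$ whose image $\bar u=\bar\psi(\bar e)$ is a minimal generator of $\bar K_k$. The Order Ideal Theorem on the regular ring $\bar R$ applied to the finite-projective-dimension $k^{th}$ syzygy $\bar K_k$ gives $\height_{\bar R}O_{\bar K_k}(\bar u)\geq k$, and functoriality of the dual pairing yields $O_{\bar K_k}(\bar u)\subseteq O_{Z_k/xZ_k}(\bar e)$; hence $\height_{\bar R}O_{Z_k/xZ_k}(\bar e)\geq k$. Lifting $\bar e$ to a minimal generator $e$ of $Z_k$, Lemma \ref{EG2lemma} applied to the hypersurface element $x$ concludes $\height_R O_{Z_k}(e)\geq\min(k,\height_{\bar R}O_{Z_k/xZ_k}(\bar e))=k$, which is the Weak Order Ideal Property for $Z_k$. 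The main obstacle is the contradiction step: the decisive identity $\mu_{\bar R}(\bar K_k)=\rank\bar K_k+\rank\bar K_{k+1}$ together with the Syzygy Theorem applied \emph{one step further} (at $\bar K_{k+1}$) just barely converts $\rank\bar K_{k-1}\leq 2k-1$ into an upper bound of $k-2$ on $\rank_R Z_k$, slim enough to collide with the universal threshold $k-1$; loosening the hypothesis beyond $2k-1$ would demand finer control on $\mu_{\bar R}(\bar K_k)$ or on the free summand $\bar L_{k-1}$.
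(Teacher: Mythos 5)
Your argument follows essentially the same route as the paper's proof: pass to $\bar R=R/(x)$, assume for contradiction that $\bar\psi\colon Z_k/xZ_k\to\bar K_k$ lands in $m\bar K_k$, apply Nakayama to force $\bar L_{k-1}\twoheadrightarrow\bar K_k$, and count ranks. The only difference is cosmetic: the paper builds an explicit $3\times 3$ diagram producing a short exact sequence $0\to K_{k+1}'\to K_{k-1}'\to\bar Z_k\to 0$ and reads off $\rank\bar K_{k-1}\geq 2k$ contradicting the hypothesis, whereas you run the equivalent arithmetic $\rank_R Z_k\leq\rank\bar K_{k-1}-\rank\bar K_{k+1}\leq k-2$ and contradict the universal bound $\rank_R Z_k\geq k-1$. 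One genuine, though easily repaired, slip: to conclude that (\ref{eqCE}) stays short exact after $\otimes_S\bar R$ you need $\Tor_1^S(K_k,\bar R)=0$, not $\Tor_1^S(Z_k/x^nZ_k,\bar R)=0$; vanishing of $\Tor_1$ of the middle term only tells you that $\Tor_1^S(K_k,\bar R)$ injects into $\bar K_{k-1}$, not that it vanishes. The needed vanishing does hold: $K_k$ has finite projective dimension over $S$, and the four-term sequence $0\to\bar R\to S\stackrel{x}{\to}S\to\bar R\to 0$ exhibits $\bar R$ as a $j^{\rm th}$ $S$-syzygy for all $j$, so $\Tor_j^S(\bar R,K_k)=0$ for $j>0$ exactly as in the proof of Lemma \ref{ldisc}. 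The paper sidesteps this by rebuilding the Cartan--Eilenberg mapping cone directly from the reduced complexes $\bar{\mathbf F}.$ and $\bar{\mathbf G}.$ over $\bar R$; either route is fine once the correct $\Tor$ vanishing is supplied.
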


\begin{proof}
 
Reducing the  two minimal resolutions of $T$ ($\mathbf{G}.\rightarrow T$  taken over the hypersurface ring $S$ and  $\mathbf{F}.\rightarrow T$ taken over $R$) mod $p$ one has $\bar{\mathbf{F}.}\ra T/pT$ and $\bar{\mathbf{G}.}\ra T/pT$ with $H_1(\bar{\mathbf{F}.})=T/pT$ and $\bar{\mathbf{G}.}$ acyclic. In the following, let  $K_{k}'=Syz_k^{\bar{R}}(T/pT)$. Similar to (\ref{eqCE}), we have a short exact sequence of $\bar{R}$-modules
$0\lra T/pT\lra \bar{Z_1}\lra K_1'\lra 0,$ and the same mapping cone construction yields a short exact sequence of syzygy modules 
$$0\lra K_{k-1}'\lra L_{k-1}\bigoplus \bar{Z}_k\lra K_k'\lra 0.$$

 Assume $\im(\bar{Z_k}\ra K_k')\subseteq m_RK_k'$, meaning the induced map $L_{k-1}\lra K_k'$ must be surjective. Consequently we obtain a commutative diagram
$$\xymatrixrowsep{20pt}
\xymatrixcolsep{25pt}
\xymatrix{
0 \ar[r] & K_{k+1}'\ar[r]\ar[d] & L_{k-1} \ar[r]\ar[d] & K_k'\ar[r]\ar[d]& 0\\
0 \ar[r] & K_{k-1}'\ar[r]\ar[d] & L_{k-1}\bigoplus \bar{Z_k}\ar[r]\ar[d] & K_k' \ar[r] & 0\\
         & \bar{Z_k} \ar@{=}[r] & \bar{Z_k} & & \\}
$$
where the left hand column is induced by the upper right square and is a short exact sequence. Since $k+1<pd_{\bar{R}}T$ and since the Syzygy Theorem holds over $\bar{R}$, we obtain that $\rank_{\bar{R}}K_{k+1}'\geq k+1$. Also it must be the case that $\rank_{\bar{R}}\bar{Z}_k=\rank_R Z_k\geq k-1$. Thus $\rank_{\bar{R}}K_{k-1}'\geq k+1+k-1=2k$. This contradicts one of our assumptions.

We have thus shown that some minimal generator $\bar{e}$ of $\bar{Z_k}$ must have its image in $K_k'\setminus m_RK_k'$ and from here we conclude that $\bar{Z_k}$ and $Z_k$ have the weak order ideal property.

\end{proof}

\begin{rmk}
 One can use the short exact sequence (\ref{eqCE}) for a minimal $S$-resolution of $T$ to reduce the hypotheses of the theorem to requiring that $\rank_S K_{k-1}\leq 2k-1$, where $K_{k-1}$ stands for a minimal $k-1^{st}$ syzygy of $T$ over $S$.
\end{rmk}

There is a large class of modules $T$ to which the above Theorem applies: by a theorem of Bruns (\cite{EG} Corollary 3.12) if $\bar{R}$ is a Cohen-Macaulay local ring of dimension $n$ then there exist ideals $I$ which have  free $\bar{R}$ resolution of the form 
$$0\ra \bar{R}\ra \bar{R}^{2n-1}\ra \bar{R}^{2n-3}\ra \cdots \ra \bar{R}^5\ra\bar{R}^3\ra\bar{R}\ra \bar{R}/I\ra 0,$$
with $\rank_{\bar{R}}\Syz_j^{\bar{R}}(\bar{R}/I)=j\leq 2j-1$.
Furthermore, if $R$ is complete regular local of mixed characteristic $p$, then the Syzygy Theorem holds over $S=R/(p^n)$.

\begin{cor}
 Under the hypotheses of Theorem \ref{2ndComp}, one has $\rank_R Z_k\geq k$.
\end{cor}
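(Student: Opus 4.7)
The plan is to combine Theorem \ref{2ndComp} with the argument of Proposition \ref{sufficient}. Theorem \ref{2ndComp} provides a minimal generator $e$ of $Z_k$ with $e \notin m Z_k$ and $\height_R O_{Z_k}(e) \geq k$, so the task is to convert this height bound into the desired rank bound.

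First I would form the short exact sequence
$$0 \lra R \lra Z_k \lra Z_k' \lra 0$$
sending $1 \mapsto e$. Because $\height_R O_{Z_k}(e) \geq k$ and $R$ satisfies the requisite Serre condition (implicit in the operative setup), $Z_k'$ is again a $(k-1)$-st syzygy of finite projective dimension, with $\rank_R Z_k' = \rank_R Z_k - 1$. Suppose for contradiction that $\rank_R Z_k \leq k-1$; by the universal lower bound of $k-1$ on the rank of any $k$-th syzygy of finite projective dimension, equality must hold and $\rank_R Z_k' = k-2$.

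Second, I would invoke the Syzygy Theorem locally over $R[x^{-1}]$, which is a natural implicit hypothesis since inverting $x$ leaves the hypersurface setting (e.g., producing an equal characteristic ring in the motivating case $x = p$). Both $Z_k[x^{-1}]$ and $Z_k'[x^{-1}]$ are then $R[x^{-1}]$-projective, being syzygies of rank below the Evans--Griffith threshold. The short exact sequence therefore splits after inverting $x$, and lifting any splitting back to an $R$-linear section produces $x^t \in O_{Z_k}(e)$ for some $t > 0$. Every minimal prime of $O_{Z_k}(e)$ then contains $x$, and Lemma \ref{EG2lemma} combined with the Order Ideal Theorem over $\bar{R} = R/(x)$ upgrades the height bound to $\height_R O_{Z_k}(e) \geq k+1$. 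This promotes $Z_k'$ to a $k$-th syzygy of rank $k-2$, violating the known lower bound $\rank \geq k-1$ for $k$-th syzygies of finite projective dimension unless $Z_k'$ is $R$-free; but then $Z_k \cong R^{k-1}$ would itself be free, incompatible with $Z_k$ appearing as a nontrivial $k$-th syzygy in a resolution of $T$ of length greater than $k$.

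The main obstacle is that Proposition \ref{sufficient} is stated explicitly only for the case $x = p$ in unramified mixed characteristic, and to reuse its argument verbatim for a general non-zerodivisor $x$ one must verify the analogous ingredients highlighted in the remark following that proposition, namely that the Order Ideal Theorem is available over $R/(x)$ and the Syzygy Theorem holds over $R[x^{-1}]$. Under these assumptions, which are already implicit in the setup of Theorem \ref{2ndComp} (which itself presupposes the Syzygy Theorem over both $S$ and $\bar{R}$), the corollary is a direct consequence of the weak order ideal property established there.
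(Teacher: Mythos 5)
Your argument is correct and follows the same path the paper intends: Theorem \ref{2ndComp} supplies the weak order ideal property for $Z_k$, and the general-hypersurface version of Proposition \ref{sufficient} (spelled out in the remark immediately after it, which requires the Order Ideal Theorem over $R/(x)$ and the Syzygy Theorem over $R[x^{-1}]$) converts that into $\rank_R Z_k \geq k$. You have essentially unrolled the proof of Proposition \ref{sufficient} and correctly flagged the implicit hypotheses and the implicit bound $k < \pd_R T$ that rules out $Z_k$ being free.
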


\begin{cor}
If $\left| \beta_{k-1}^S-\beta_k^S\right|<k-1 $, then $\bar{Z_q}$ has the weak order ideal property.
\end{cor}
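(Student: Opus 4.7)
The strategy is to exploit the short exact sequence (\ref{eqCE}) from the Cartan--Eilenberg construction applied to a minimal $S$-free resolution of $T$, as in the preceding remark:
\begin{equation*}
0 \to K_{k-1} \to L_{k-1} \oplus Z_k/x^n Z_k \to K_k \to 0,
\end{equation*}
where $K_i=\Syz_i^S(T)$ and the $S$-free module $L_{k-1}$ arises from the non-minimality of the Cartan--Eilenberg resolution. First I will compute the rank of $L_{k-1}$: reading off ranks and using the standard identity $\rank_S K_{k-1}+\rank_S K_k=\beta_{k-1}^S$ together with $\rank_S(Z_k/x^nZ_k)=\rank_R Z_k$, I obtain
\begin{equation*}
\rank L_{k-1} = \beta_{k-1}^S - \rank_R Z_k.
\end{equation*}

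Combining this identity with the standard lower bound $\rank_R Z_k\ge k-1$ valid for any $k^{th}$ syzygy over a local ring (see p.~63 of \cite{EG}) and the hypothesis $|\beta_{k-1}^S-\beta_k^S|<k-1$, one deduces
\begin{equation*}
\rank L_{k-1} \le \beta_{k-1}^S - (k-1) < \beta_k^S = \mu(K_k).
\end{equation*}
Since $L_{k-1}$ is $S$-free of rank strictly less than the minimal number of generators $\mu(K_k)$ of $K_k$, it alone cannot surject onto $K_k$ modulo the maximal ideal. The surjection $L_{k-1}\oplus Z_k/x^nZ_k\twoheadrightarrow K_k$ therefore forces the map $Z_k/x^nZ_k\to K_k$ to have image not contained in $mK_k$, producing a minimal generator $\bar e$ of $Z_k/x^nZ_k$ whose image $\bar u\in K_k$ is a minimal generator of $K_k$.

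To conclude, I apply the Order Ideal Theorem to $\bar u$ as a minimal generator of the $k^{th}$ syzygy $K_k$ over $S$. This is available in the setting of interest via the comparison with the regular hypersurface $\bar R=R/(x)$ through Lemmas~\ref{ldisc} and \ref{EG2lemma}, exactly as in the proof of Theorem~\ref{2ndComp}: one obtains $\height_R O_{K_k}(\bar u)\ge k$, and the inclusion $O_{K_k}(\bar u)\subseteq O_{Z_k/x^nZ_k}(\bar e)$ then yields $\height_R O_{\bar Z_k}(\bar e)\ge k$, establishing the weak order ideal property for $\bar Z_k$. The main technical point is the rank computation for $L_{k-1}$ from the Cartan--Eilenberg construction, together with confirming that the Order Ideal Theorem bound transfers to $K_k$ as a $k^{th}$ syzygy over $S$ in the present setting.
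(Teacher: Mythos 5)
Your proof is correct and in fact clarifies what the paper's terse one-line proof presumably intends. The paper's own justification --- that $\left|\beta_{k-1}^S-\beta_k^S\right|$ ``represents the positive difference between the ranks of $K_{k-1}$ and $K_k$ over $S$'' --- is not literally accurate: from $0\to K_k\to G_{k-1}\to K_{k-1}\to 0$ one has $\rank_S K_{k-1}+\rank_S K_k=\beta_{k-1}^S$, so $\rank_S K_{k-1}-\rank_S K_k=\beta_{k-1}^S-2\rank_S K_k$, which is not $\beta_{k-1}^S-\beta_k^S$ in general (the Betti numbers are the $\mu$'s, not the ranks). The intended content, made precise by your argument, is the following rank count in (\ref{eqCE}): additivity of rank gives $\rank_S L_{k-1}=\beta_{k-1}^S-\rank_R Z_k$, and if the map $Z_k/x^nZ_k\to K_k$ vanished modulo $m$ then $L_{k-1}$ alone would have to surject onto $K_k$, forcing $\rank_S L_{k-1}\geq \mu(K_k)=\beta_k^S$; combined with $\rank_R Z_k\geq k-1$ this gives $\beta_{k-1}^S-\beta_k^S\geq k-1$, contradicting the hypothesis. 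This is the same contradiction reached in the proof of Theorem~\ref{2ndComp} (there phrased as $\rank K_{k-1}\geq 2k$), but your bookkeeping is more direct because it compares the free rank of $L_{k-1}$ against $\mu(K_k)$ rather than passing through a lower bound for the rank of $K_{k-1}$. One small point to keep in mind: the comparison $O_{K_k}(\bar u)\subseteq O_{\bar Z_k}(\bar e)$ and the descent of the height bound to $O_{Z_k}(e)$ via Lemmas~\ref{ldisc} and~\ref{EG2lemma} is exactly the route the paper takes at the end of Theorem~\ref{2ndComp}, so your final paragraph is doing the right thing and no gap remains.
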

\begin{proof}
$\left| \beta_{k-1}^S-\beta_k^S\right|$ represents the positive difference between the ranks of the modules $K_{k-1}$ and $K_{k}$ over $S$.
\end{proof}

\section{Strong Syzygy Theorems}

In this section we point out a stronger lower bound for ranks of syzygies of modules over a local ring $(R,m)$ annihilated by an element $x\in m- m^2$. This result was obtained by J. Shamash \footnote{We wish to thank Sankar Dutta for introducing us to Shamash's article.} in \cite{S} and also L. Avramov in \cite{A}. Free of any assumptions on the annihilator of the module, we show that the same strengthened theorem holds for syzygies of weakly liftable modules.

\subsection{Strong Syzygy Theorem via homotopy splitting}
 We begin by casting some known results on resolutions over hypersurface rings of type $R/(x)$, $x\in m - m^2$ in the light of the ranks of syzygies problem.
 
\begin{thm}[Strong Syzygy Theorem]{\label{StrongSyzShamash}}
 Let $(R,m)$ be a local ring and $M$ an $R$-module annihilated by $x\in m- m^2$. Set $\bar{R}=R/x$ and assume that the syzygy theorem holds over $\bar{R}$. Then 
\begin{enumerate}
 \item $0 \ra \Syz_{k-1}^{\bar{R}}(M)\ra \overline{\Syz_k^R(M)}\ra \Syz_k^{\bar{R}}(M)\ra 0$ for $2\leq k\leq pd M-1$,
\item $\beta_k^R(M)=\beta_{k-1}^{\bar{R}}(M)+ \beta_k^{\bar{R}}(M)$ for $2\leq k\leq pd M-1$,
\item $\rank \Syz_k(M)\geq 2k-1$ for $1\leq k\leq pd(M)-3$.
\end{enumerate}
 where the Betti number $\beta_k(M)$ is the rank of the $k^{th}$ free module in a minimal resolution of $M$ (over $R$ or $\bar{R}$ respectively according to the superscript).
\end{thm}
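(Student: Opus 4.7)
The plan is to build a minimal $R$-free resolution of $M$ via Shamash's construction and then read off the three statements. Since $xM = 0$, a minimal $\bar{R}$-free resolution $\bar{G}_\bullet \to M$ exists and is finite (as $\pd_{\bar{R}} M = \pd_R M - 1$ by Auslander-Buchsbaum). Lift each $\bar{G}_k$ to a free $R$-module $G_k$ of the same rank and each $\bar{d}_k$ to an $R$-map $d_k : G_k \to G_{k-1}$. The relations $\bar{d}_{k-1} \bar{d}_k = 0$ give $d_{k-1}d_k = x s_k$ for some $s_k : G_k \to G_{k-2}$, and Shamash's theorem assembles these (with compatibly chosen higher-order homotopies ensuring $\delta^2 = 0$) into an $R$-free resolution $F_\bullet$ with $F_k = G_k \oplus G_{k-1}$ and differential
$$\delta_k = \begin{pmatrix} d_k & x \cdot \mathrm{id} \\ -s_k & -d_{k-1} \end{pmatrix}.$$
The hypothesis $x \in m \setminus m^2$ is crucial here: entries of $d_k$ lie in $m$ by minimality of $\bar{G}_\bullet$, so $d_{k-1}d_k$ has entries in $m^2$, and $x \notin m^2$ forces $s_k$ to have entries in $m$. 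Hence $F_\bullet$ is minimal, which immediately yields (2): $\beta_k^R(M) = \beta_k^{\bar{R}}(M) + \beta_{k-1}^{\bar{R}}(M)$.

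For (1), I would work with the explicit description of syzygies in $F_\bullet$. An element of $\Syz_k^R(M) \subseteq F_{k-1}$ is a pair $(a, b) \in G_{k-1} \oplus G_{k-2}$ with $d_{k-1}a + xb = 0$ and $s_{k-1}a + d_{k-2}b = 0$. Reducing modulo $x$, the first projection induces a map $\overline{\Syz_k^R(M)} \to \Syz_k^{\bar{R}}(M)$ whose image lies in $\ker \bar{d}_{k-1} = \Syz_k^{\bar{R}}(M)$. Given $\bar{a} \in \ker \bar{d}_{k-1}$, lift to $a \in G_{k-1}$; then $d_{k-1}a \in xG_{k-2}$, so write $d_{k-1}a = -xb$, and the identity $d_{k-2}d_{k-1} = xs_{k-1}$ forces $d_{k-2}b = -s_{k-1}a$, verifying the second relation and yielding surjectivity. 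Pairs in $\Syz_k^R(M)$ with $\bar{a}=0$ must have $a = xa'$ for some $a'$, which forces $b = -d_{k-1}a'$; their reductions $(0, -\bar{d}_{k-1}\bar{a}')$ exhaust $\{0\} \times \im\bar{d}_{k-1} = \{0\} \times \Syz_{k-1}^{\bar{R}}(M)$, identifying the kernel. This gives (1).

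For (3), I would use rank additivity. Because $x$ is a non-zerodivisor on $\Syz_k^R(M)$ and on all its higher syzygies, reducing the minimal $R$-resolution of $\Syz_k^R(M)$ modulo $x$ produces a minimal $\bar{R}$-resolution of $\overline{\Syz_k^R(M)}$, so $\rank_R \Syz_k^R(M) = \rank_{\bar{R}} \overline{\Syz_k^R(M)}$. The short exact sequence in (1) then gives
$$\rank_R \Syz_k^R(M) = \rank_{\bar{R}}\Syz_{k-1}^{\bar{R}}(M) + \rank_{\bar{R}}\Syz_k^{\bar{R}}(M),$$
and for $1 \le k \le \pd_R M - 3$ both summands are non-free syzygies of finite projective dimension over $\bar{R}$, so the assumed Syzygy Theorem over $\bar{R}$ gives ranks at least $k-1$ and $k$ respectively, totaling $2k-1$; the boundary case $k=1$ reduces to $\rank\Syz_1^R(M)\geq 1$, which is trivial since $M$ is not $R$-free. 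The main technical obstacle in this plan is verifying the Shamash construction itself, particularly that the higher-order homotopies can be chosen compatibly so that $\delta^2 = 0$; for these details I would invoke \cite{S} or \cite{A} directly.
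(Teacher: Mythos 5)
Your proof is correct and reconstructs exactly the construction the paper cites for parts (1) and (2) — the paper's proof is a one-line pointer to Shamash \cite{S} (section 2) and Avramov \cite{A} (Proposition 3.3.5) — and your derivation of (3) from (1) via rank additivity plus the Syzygy Theorem over $\bar{R}$ is the same as the paper's. One reassurance about the ``technical obstacle'' you flag at the end: there are no higher-order homotopies to worry about in this setting. Since $x$ is a nonzerodivisor, the homotopy $s_k$ is uniquely determined by $d_{k-1}d_k = x s_k$, and the one remaining identity needed for $\delta^2 = 0$, namely $s_{k-1}d_k = d_{k-2}s_k$, follows from $x\, s_{k-1}d_k = d_{k-2}d_{k-1}d_k = x\, d_{k-2}s_k$ by cancelling $x$. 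Higher homotopies $s^{(2)}, s^{(3)}, \dots$ only enter the full Shamash construction $F_n = \bigoplus_{i\ge 0}G_{n-2i}$ used when the hypersurface element lies in $m^2$; the two-term version $F_k = G_k\oplus G_{k-1}$ that you wrote down closes up after the first homotopy, precisely because $x\notin m^2$ and $\pd_{\bar{R}}M$ is finite, so this step is a short direct verification rather than a genuine appeal to the references.
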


For  (1) and (2) see section 2 of Shamash's paper \cite{S} or Proposition 3.3.5 and the subsequent remarks in Avramov's lecture \cite{A}. Part (3) follows from (1) and the additional assumption that the Syzygy Theorem holds over $\bar{R}$.

\begin{rmk}
 We note that the stronger bound on ranks of syzygies in Theorem \ref{StrongSyzShamash} requires hypotheses that are quite a bit more restrictive than the ones in Theorem \ref{AnnSyz}. Indeed one can easily construct examples of modules $M$ (even cyclic ones) where $x\in m-m^2$ is such that $xM\neq 0$ but $x\Ext^k_R(M,\cdot)\equiv 0$ for some $k>0$. If $R$ is regular local with $\dim R > 3$, let $0\ra F\ra K\ra I\ra 0$ be a Bourbaki (see Theorem \ref{Bourbaki}) exact sequence in which $K$ is a second syzygy of the residue field $R/m$. Then $ht I=2$ since $K$ is not free, so that $x(R/I)\neq 0$ for any $x\in m-m^2$. Moreover, since the syzygies of $R/I$ are the same as the syzygies of the residue field appropriately shifted in homological degree, if follows that $m\Ext_R^k(R/I,\cdot)\equiv 0$ for $k\geq 2$.
\end{rmk}

\subsection{Strong Syzygy Theorem via weak lifting}

In the following we derive the strong bound on ranks of syzygies of the Strong Syzygy Theorem under different hypotheses. 

Let $R\lra S$ be a ring homomorphism and let $M'$ be an $S$-module. An $R$-module $M$ is called a lifting of $M'$ if  $M'=M\otimes_R S$ and  $Tor_i^R(S,M)=0$ for $i\geq 1$. When $S = R /(x )$ where $x$ is a non-zerodivisor in $R$, a situation which will be our main focus, then  the latter condition for lifting simply says that $x$ must be a non-zerodivisor on $M$. In this case we reformulate the definition above as

\begin{defn}
 Let $R$ be a ring, let $x$ be a non-zerodivisor and not a unit and $\bar{R}=R/(x)$. Let $\bar{M}$ be a $\bar{R}$-module of finite type. We call a $R$-module $M$ of finite type a lifting of $\bar{M}$ if 
\begin{enumerate}
 \item $x$ is not a zerodivisor on $M$ and
\item $\bar{M}\simeq M/xM$.
\end{enumerate}
\end{defn}

In \cite{ADS} Auslander, Ding and Solberg introduce the concept of weakly liftable modules. 

\begin{defn}
Let $R \ra S$ be a ring homomorphism. An $S$-module $M$ is said to weakly lift (or be weakly liftable) to $R$ if it is a direct summand of a liftable module. 
\end{defn}

Questions about lifting can be traced back to Grothendieck who formulated the following lifting problem: Suppose that $(R,m)$ is a complete regular local ring and that $x\in m-m^2$ so that $\bar{R}=R/(x)$ is again regular. If $\bar{M}$ is a $\bar{R}$-module of finite type, does $\bar{M}$ lift to $R$? It is known by work of Hochster \cite{H2} that the answer to this problem is negative.  Explicit conditions for cyclic modules to be liftable and weakly liftable respectively can be found in H. Dao's recent article \cite{Dao}.

In deformation theory a lift from $R/I^i$ to $R$ is called an $i^{th}$ infinitesimal deformation. Auslander, Ding and Solberg show that weak lifting is equivalent to lifting to the first infinitesimal deformation.

\begin{prop}[Proposition 3.2 in \cite{ADS}]\label{infinitesimal}
Let $R$ be a ring, let $x$ be a non-zerodivisor and not a unit and $\bar{R}=R/(x)$. Let $M$ be a $\bar{R}$-module of finite type. A necessary condition for  $M$ to weakly lift is the splitting of the following extension:
$$0\lra M \stackrel{\delta}\lra \overline{\Syz_1^R M} \lra \Syz_1^{\bar{R}}M\lra 0$$

\end{prop}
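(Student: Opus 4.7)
The plan is to produce an explicit retraction of the extension $\delta: M\hookrightarrow \overline{\Syz_1^R M}$ directly from a lifting of $M$, and then to deduce the weakly liftable case by a direct-summand argument. First I would describe $\delta$ concretely. Picking an $R$-free presentation $0\to Z\to F\to M\to 0$ of $M$ (viewed as an $R$-module via $R\to \bar R$), the hypothesis $xM=0$ gives $\Tor_1^R(\bar R,M)\cong M$, and the snake lemma applied to multiplication by $x$ on this presentation shows that the four-term sequence obtained by tensoring with $\bar R$ takes the form
\[
0\to M\stackrel{\delta}\to \bar Z\to \bar F\to M\to 0,
\]
with $\delta(m)=[x\tilde m]$ for any lift $\tilde m\in F$ of $m$. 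With $\bar Z=\overline{\Syz_1^R M}$ and $\ker(\bar F\to M)=\Syz_1^{\bar R} M$, the left half of this four-term sequence is exactly the extension in the statement.

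Next I would assume $M$ literally lifts to an $R$-module $L$ (so $L/xL\simeq M$ and $x$ is a non-zerodivisor on $L$), and arrange the presentation so that $F$ surjects onto $L$. Applying the snake lemma to the factorization $F\to L\to M$, and using $\ker(L\to M)=xL\simeq L$, yields the short exact sequence
\[
0\to \Syz_1^R L\to \Syz_1^R M\to L\to 0.
\]
Since $x$ is regular on $L$, tensoring with $\bar R$ gives a short exact sequence
\[
0\to \overline{\Syz_1^R L}\to \overline{\Syz_1^R M}\stackrel{\pi}\to M\to 0,
\]
where $\pi([z])=\ell\bmod xL$ whenever $z\in F$ has image $x\ell\in L$. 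The crucial compatibility is $\pi\circ\delta=\mathrm{id}_M$: if $\tilde m\in F$ lifts $m$, then its image $\ell\in L$ also lifts $m$, and $x\tilde m\in F$ maps to $x\ell\in L$, whence $\pi(\delta(m))=\ell\bmod xL=m$. Thus $\pi$ retracts $\delta$ and the sequence of the proposition splits whenever $M$ is liftable.

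To handle a weakly liftable $M$, write it as a direct summand of a liftable module $M\oplus N$. Taking direct sums of $R$-free resolutions of $M$ and $N$ produces one for $M\oplus N$, and the sequence of the proposition for $M\oplus N$ decomposes as the external direct sum of the analogous sequences for $M$ and $N$. A retraction of the summed sequence, provided by the previous step, restricts to a retraction of the $M$-piece by pre/post-composition with the inclusion $\overline{\Syz_1^R M}\hookrightarrow \overline{\Syz_1^R M}\oplus \overline{\Syz_1^R N}$ and the projection onto $M$, giving the required splitting. The main obstacle I anticipate is the compatibility check $\pi\circ\delta=\mathrm{id}_M$, which requires tracking lifts through two different snake-lemma constructions; everything else reduces to routine dimension shifting once the $\Tor$-vanishing from $x$ being a non-zerodivisor on $L$ is in place.
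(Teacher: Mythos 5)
Your argument is correct. Note that the paper does not supply its own proof of this statement; it simply quotes it as Proposition 3.2 of Auslander--Ding--Solberg, so there is no in-paper argument to compare against. Your construction is a clean, self-contained verification: the identification of $\delta$ with the connecting map $m\mapsto[x\tilde m]$ in the four-term sequence is standard, the short exact sequence $0\to\Syz_1^R L\to\Syz_1^R M\to xL\to 0$ follows immediately from the factorization $F\twoheadrightarrow L\twoheadrightarrow M$, and the $\Tor$-vanishing from $x$ being regular on $L$ lets you reduce mod $x$ to obtain $\pi$. The compatibility check $\pi\circ\delta=\mathrm{id}_M$ is verified correctly: $x\tilde m\in Z$ maps to $x\ell\in xL$, division by $x$ gives $\ell$, and $\ell\bmod xL=m$ because $\ell$ is the image of $\tilde m$ in $L$. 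The passage to weak liftings is also handled properly: a retraction of the summed extension need not respect the decomposition, but sandwiching it between the inclusion of $\overline{\Syz_1^R M}$ and the projection onto $M$ still yields a retraction of the $M$-component, since $\delta_{M\oplus N}|_M$ factors through that inclusion.

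One small point worth making explicit when you ``arrange the presentation so that $F$ surjects onto $L$'': the splitting of the extension in the statement is independent of the choice of $R$-free presentation of $M$ (different choices change $\overline{\Syz_1^R M}$ and $\Syz_1^{\bar R}M$ only by free direct summands, by Schanuel), so you are free to use the presentation coming from a free cover of $L$. This is implicit in your write-up and worth a sentence so the reader sees the argument does not depend on that particular choice.
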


We observe that if weak lifting occurs, the lower bound on the ranks of syzygies given by the Syzygy Theorem can be strengthened.

\begin{prop} [Strong Syzygy Theorem]\label{StrongSyzWLift}
 Let $(R,m)$ be a local ring, $\bar{R}=R/(x)$ and let  $M$ be a weakly liftable $\bar{R}$ module such that the Syzygy Theorem holds for syzygies of $M$. Then 
\begin{enumerate}
 \item the sequence $0 \ra \Syz_{k-1}^{\bar{R}}(M)\ra \overline{\Syz_k^R(M)}\ra \Syz_k^{\bar{R}}(M)\ra 0$ is split exact  for $2\leq k\leq pd M-1$,
\item $\beta_k^R(M)=\beta_{k-1}^{\bar{R}}(M)+ \beta_k^{\bar{R}}(M)$ for $2\leq k\leq pd M-1$, 
\item $\rank \Syz_k(M)\geq 2k-1$ for $2\leq k\leq pd(M)-2$
\end{enumerate}
where the Betti number $\beta_k(M)$ is the rank of the $k^{th}$ free module in a minimal resolution of $M$ (over $R$ or $\bar{R}$ respectively according to the superscript).
\end{prop}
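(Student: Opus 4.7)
The plan is to prove (1) directly; (2) and (3) follow formally from it. For (2), splitting of the sequence in (1) gives $\beta_k^R(M) = \beta_{k-1}^{\bar{R}}(M) + \beta_k^{\bar{R}}(M)$, since the minimal number of generators is additive across a split exact sequence. For (3), $\Syz_k^R(M)$ sits inside a free $R$-module, so $x$ is a non-zerodivisor on it and $\rank_R \Syz_k^R(M) = \rank_{\bar{R}}\overline{\Syz_k^R(M)} = \rank_{\bar{R}} \Syz_{k-1}^{\bar{R}}(M) + \rank_{\bar{R}} \Syz_k^{\bar{R}}(M)$. In the range $2\le k\le \pd M-2$, both $\bar{R}$-syzygies are non-free of finite projective dimension (of orders $k-1$ and $k$ respectively), so the Syzygy Theorem over $\bar{R}$, which holds by hypothesis, gives ranks at least $k-1$ and $k$, summing to $2k-1$.

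For (1), I would first treat the case when $M$ honestly lifts: suppose there is an $R$-module $\tilde{M}$ on which $x$ is a non-zerodivisor and with $\tilde{M}/x\tilde{M}\simeq M$. Take the minimal $R$-free resolution $\tilde{F}_\bullet \to \tilde{M}$. Since $x$ is regular on $\tilde{M}$, its reduction $\bar{\tilde{F}}_\bullet$ is a minimal $\bar{R}$-resolution of $M$, yielding $\Syz_i^{\bar{R}}(M)\simeq\overline{\Syz_i^R(\tilde{M})}$. Now form the mapping cone $F_\bullet := \mathrm{Cone}(x\cdot : \tilde{F}_\bullet \to \tilde{F}_\bullet)$, whose terms are $F_n = \tilde{F}_n \oplus \tilde{F}_{n-1}$ and whose differentials are assembled from blocks $\pm d_{\tilde{F}}$ and $\pm x\cdot\mathrm{id}$, all with entries in $m$. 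The short exact sequence $0\to \tilde{M}\stackrel{x}{\lra} \tilde{M}\to M\to 0$ shows $F_\bullet$ is an $R$-resolution of $M$, which is minimal by the differential condition. Reducing $F_\bullet$ mod $x$ makes its differential block diagonal, so $\bar{F}_\bullet\simeq \bar{\tilde{F}}_\bullet \oplus \bar{\tilde{F}}_\bullet[1]$ as $\bar{R}$-complexes. Passing to $k$-th syzygies one finds $\overline{\Syz_k^R(M)}\simeq \Syz_k^{\bar{R}}(M)\oplus \Syz_{k-1}^{\bar{R}}(M)$, and the Shamash sequence $0\to\Syz_{k-1}^{\bar{R}}(M)\to\overline{\Syz_k^R(M)}\to\Syz_k^{\bar{R}}(M)\to 0$ is precisely this split decomposition.

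For the general weakly liftable case, write $N=M\oplus X$ for a liftable module having $M$ as summand. The Shamash-sequence construction is additive under direct sums of modules, because minimal $R$- and $\bar{R}$-resolutions of $M\oplus X$ decompose as direct sums and tensoring with $\bar{R}$ respects these decompositions. Hence the Shamash sequence for $N$ is the direct sum of those for $M$ and $X$. Applying the liftable case to $N$ produces a splitting of its sequence; projecting onto the $M$-summand yields a splitting of the Shamash sequence for $M$.

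The step requiring the most care is the identification of the split direct-sum decomposition arising from the cone construction with the Shamash sequence as canonically defined from a given minimal $R$-resolution of $M$, together with the naturality statement that this sequence respects direct sums of modules. Both amount to verifying that the construction from $R$-resolutions mod $x$ is functorial in the relevant choices.
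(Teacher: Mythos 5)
Your proof is correct, but it proceeds differently from the paper's. The paper's proof invokes the cited Auslander--Ding--Solberg result (Proposition \ref{infinitesimal}) as a black box: weak liftability of $M$ splits the degree-one sequence $0 \to M \to \overline{\Syz_1^R M} \to \Syz_1^{\bar R} M \to 0$, so $\overline{\Syz_1^R M} \cong M \oplus \Syz_1^{\bar R}(M)$ up to free summands, and then one iterates using the fact that $\overline{\Syz_k^R M} = \Syz_1^{\bar R}\bigl(\overline{\Syz_{k-1}^R M}\bigr)$ for $k\geq 2$ (which holds because $x$ is regular on $\Syz_{k-1}^R M$), yielding $\overline{\Syz_k^R M}\cong \Syz_{k-1}^{\bar R}(M)\oplus \Syz_k^{\bar R}(M)$ and hence the rank bound. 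You instead bypass \ref{infinitesimal} entirely: for an honestly liftable $M$ you construct the minimal $R$-resolution of $M$ explicitly as a mapping cone $\mathrm{Cone}(x\cdot:\tilde F_\bullet\to\tilde F_\bullet)$ of the minimal resolution of a lift $\tilde M$, observe that reducing mod $x$ kills the off-diagonal $x$-block making the complex split as $\bar{\tilde F}_\bullet\oplus \bar{\tilde F}_\bullet[1]$, and read off the split decomposition of $\overline{\Syz_k^R(M)}$; you then pass to the weakly liftable case by additivity of the construction under direct sums. Both arguments are valid. Your mapping-cone route is more self-contained and simultaneously proves (1) and (2) in a transparent way, essentially re-deriving the ADS splitting rather than citing it; the paper's route is shorter but leans on the external reference (and the displayed recursion in the paper's proof, $\bar Z_k=\bar Z_{k-1}\oplus \Syz_1^{\bar R}(\bar Z_{k-1})$, is in fact a misprint for $\bar Z_k=\Syz_1^{\bar R}(\bar Z_{k-1})$ — your careful derivation avoids that pitfall). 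One small simplification available in your final step: since the ADS statement you are implicitly reproving is actually an equivalence, the weakly liftable case of (1) could be concluded directly by applying Proposition \ref{infinitesimal}, avoiding the direct-sum/naturality discussion altogether.
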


\begin{proof}
 From the previous lemma, $\bar{Z_1}=\bar{M}\oplus \Syz_1^{\bar{R}}(\bar{M})$. A similar relation holds (with $M$ replaced by $Z_{k-1}$) for $Z_k= \Syz_{1}^{R}(Z_{k-1})$  :
$$\bar{Z}_k=\bar{Z}_{k-1}\oplus \Syz_1^{\bar{R}}(\bar{Z}_{k-1})= \bar{Z}_{k-1}\oplus \Syz_k^{\bar{R}}(\bar{M}).$$
It follows that 
$$\rank _R Z_k= \rank _{\bar{R}} \bar{Z}_k = \rank_{\bar{R}} \bar{Z}_{k-1} + \rank_{\bar{R}} \Syz_k^{\bar{R}}(\bar{M}) \geq (k-1)+k =2k-1,$$
where the bounds on the ranks of the two summands stem from the original Syzygy Theorem.
\end{proof}

Note that the hypotheses of \ref{StrongSyzShamash} and \ref{StrongSyzWLift} are incomparable, in particular in \ref{StrongSyzWLift} we do not require the element $x$ to lie outside the square of the maximal ideal.

 \begin{rmk}
  This observation yields a crude obstruction to lifting modules, at least in case that the lifting occurs modulo an element of $m^2$, as $\rank \Syz_k(M)<2k-1$ will guarantee that $M$ does not lift.
 \end{rmk}
 
  The consequences of the  decomposition of syzygy modules under similar hypotheses in the context of Poincare series have been studied in thorough detail by O'Carroll-Popescu \cite{OP}.

\subsection{Strong Syzygy Theorem via a four-term exact sequence  }

Let $(R,m)$ be a regular local ring and let $E$ be a finitely generated, torsion free $R$-module. One can always find $x\in m - \{0\}$ such that $E[x^{-1}]$ is $R[x^{-1}]$-free. This observation is equivalent to the requirement that $E$ contain a free submodule $F$ such that $x^s(E/F)=0$. In this section we consider the impact of the additional requirement $x\in m- m^2$ when $E$ is a $k^{th}$ syzygy module.
 
\begin{prop}
 Let $(R,m)$ be a local ring and suppose that there exists an element $x$ of $m-m^2$ such that the Syzygy Theorem holds over $\bar{R}=R/(x)$. Suppose further that $E$ is a non-free $k^{th}$ syzygy module over $R$ such that $E$ contains a free submodule $F$ with $x^s(E/F)=0$ for $s\gg 0$. If the quotient module $M=E/F$ has the property $M/xM\simeq (0:_M x)$, then $\rank_R E\geq 2k-1$.
\end{prop}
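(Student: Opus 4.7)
The plan is to reduce modulo $x$ to obtain a four-term exact sequence, and then use Serre depth analysis together with the assumed Syzygy Theorem over $\bar R=R/(x)$ to bound $\rank_R E$.

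First I would tensor $0\to F\to E\to M\to 0$ with $\bar R$. Using freeness of $F$ and $x$-regularity of $E$ (a first $R$-syzygy), together with the identification $\Tor_1^R(M,\bar R)=(0:_M x)\simeq\bar M$ supplied by the hypothesis, the derived long exact sequence collapses to the four-term exact sequence
\[ 0\to\bar M\to\bar F\to\bar E\to\bar M\to 0, \]
which splits at $N:=\mathrm{image}(\bar F\to\bar E)=\ker(\bar E\to\bar M)$ into the short exact sequences $(i)\ 0\to\bar M\to\bar F\to N\to 0$ and $(ii)\ 0\to N\to\bar E\to\bar M\to 0$. Since $x$ is regular on $E$, $\rank_R E=\rank_{\bar R}\bar E=\rank_{\bar R}N+\rank_{\bar R}\bar M$, reducing the claim to showing $\rank_{\bar R}\bar M\geq k$ and $\rank_{\bar R}N\geq k-1$.

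These bounds I would obtain by Serre depth analysis, applied locally at every prime $P$ of $\bar R$. Since $E$ is a $k^{th}$ $R$-syzygy, $E$ satisfies $S_k$ over $R$, and quotienting by the regular element $x$ drops the depth by one at every prime, so $\bar E$ satisfies $S_{k-1}$ over $\bar R$. I would then iterate the depth lemma applied to $(i)$ (which raises $\depth_{\bar R_P}\bar M_P$ by one above $\depth_{\bar R_P}N_P$, because $\bar F$ is a genuinely free $\bar R$-module with $\depth_{\bar R_P}\bar F_P=\dim\bar R_P$) and to $(ii)$ (which gives $\depth_{\bar R_P}N_P\geq\min(\depth_{\bar R_P}\bar E_P,\depth_{\bar R_P}\bar M_P+1)$). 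Bootstrapping from trivial lower bounds, these coupled inequalities converge to
\[ \depth_{\bar R_P}\bar M_P\geq\min(k,\dim\bar R_P),\qquad \depth_{\bar R_P}N_P\geq\min(k-1,\dim\bar R_P), \]
i.e., $\bar M$ satisfies $S_k$ and $N$ satisfies $S_{k-1}$. Combined with $\pd_{\bar R}\bar M<\infty$ (guaranteed by Lemma~\ref{ldisc} from $M/xM\simeq(0:_M x)$), the Evans-Griffith characterization over the $S_k$ ring $\bar R$ identifies $\bar M$ as a $k^{th}$ and $N$ as a $(k-1)^{st}$ $\bar R$-syzygy. The assumed Syzygy Theorem over $\bar R$ then gives $\rank_{\bar R}\bar M\geq k$ and $\rank_{\bar R}N\geq k-1$, so $\rank_R E\geq(k-1)+k=2k-1$.

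The main technical obstacle is the iterated depth estimate: one must verify carefully that the coupled inequalities produced by $(i)$ and $(ii)$ bootstrap at \emph{every} prime of $\bar R$, not merely globally, so that the correct Serre conditions truly hold. The hypothesis $x\in m-m^2$ ensures $\bar R$ inherits the ambient structure needed for the Syzygy Theorem (so the rank bound is indeed $\geq k$), and the lifting condition $(0:_M x)\simeq M/xM$, via Lemma~\ref{ldisc}, is essential for $\pd_{\bar R}\bar M<\infty$ and hence for the Serre characterization of syzygies to be available.
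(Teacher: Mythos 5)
Your proposal follows the paper's proof in all essentials: derive the four-term exact sequence $0\to\bar M\to\bar F\to\bar E\to\bar M\to 0$ from the hypothesis $M/xM\simeq(0:_M x)$ together with $x$-regularity on $E$ and $F$, split it at the image $N$, argue prime-by-prime via the depth lemma that $\bar M$ satisfies $S_k$ and $N$ satisfies $S_{k-1}$ over $\bar R$, and then apply the assumed Syzygy Theorem over $\bar R$ to the two pieces to get $\rank_R E=\rank_{\bar R}\bar M+\rank_{\bar R}N\geq k+(k-1)$. This is exactly the paper's decomposition and depth argument, with the paper phrasing the depth step as a contradiction rather than a bootstrapped inequality.

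One slip worth correcting: your citation of Lemma~\ref{ldisc} runs backwards. That lemma takes $\pd_S T<\infty$ as \emph{hypothesis} and concludes $T/xT\simeq(0:_T x)$ together with $\pd_{\bar R}\bar T<\infty$; one cannot invoke it to deduce $\pd_{\bar R}\bar M<\infty$ from the hypothesis $M/xM\simeq(0:_M x)$ of the proposition, which does not postulate finite $S$-projective dimension. The paper's own proof never raises finite projective dimension at all; it relies on the $S_k$-characterization of $k^{th}$ syzygies (which needs no $\pd$ assumption) and on the blanket hypothesis that the Syzygy Theorem holds over $\bar R$. If you wish to keep the remark about $\pd_{\bar R}\bar M$, it should be presented as an additional standing assumption (e.g., $\bar R$ regular, the intended setting) rather than as a consequence of Lemma~\ref{ldisc}.
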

\begin{proof}
 Our assumptions on $M$ yield a four-term exact sequence where $\ \bar{\cdot} \ $ indicates quotient modules modulo $(x)$.
$$0\ra \bar{M}\stackrel{\delta}\ra \bar{F}\ra \bar{E} \ra \bar{M}\ra 0$$
Our strategy is to argue that $\bar{M}$ and $Z$ are, as $\bar{R}$-modules,  $k^{th}$ and $(k-1)^{st}$ syzygies respectively. A consequence of this is
$$\rank_R E=\rank_{\bar{R}} \bar{E} =\rank_{\bar{R}} \bar{M}+\rank_{\bar{R}} Z\geq k+(k-1)=2k-1.$$

To verify the syzygy property for $\bar{M}$ and $Z$, we use the fact that being a $k^{th}$ $R$-syzygy is equivalent to having Serre's property $S_k$. Towards this end, let $P\in Spec(R)$ be such that $x\in P$. In case $E_P$ is a free $R_P$-module (or equivalently $\bar{E}_P$ is a free $\bar{R}_P$-module), one easily analyzes the four-term sequence to see that both $\bar{M}_P$ and $Z_P$ will be $\bar{R}_P$-free, thus both will be Cohen-Macaulay modules over $\bar{R}_P$. Thus it remains to consider the situation in which $\depth_R E_P \geq k$ (so $\depth \bar{E}_P\geq k-1$), while $\depth_{R_P}\bar{M}_P<k$. An application of the depth lemma (\cite{EG}, Lemma 1.1) yields the contradiction.
$$\depth_{\bar{R}_P}\bar{M}_P=1+\depth_{\bar{R}_P} Z_P\geq 1+1+\depth_{\bar{R}_P}\bar{M}_P=2+\depth_{\bar{R}_P}\bar{M}_P$$
when applied to the four-term sequence.
\end{proof}

\begin{cor}
 Let $R$ and $x$ be as in the previous proposition and suppose $E$ is a non-free $k^{th}$ syzygy module over $R$ such that $x^s\Ext^1_R(E,\cdot)\equiv 0, \mbox{ for } s\gg 0.$
If $0\ra Z\ra F\ra E\ra 0$ is short exact with $F$ free then 
$\rank_R(E\oplus Z) \geq 2k-1.$
\end{cor}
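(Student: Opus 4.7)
The strategy is to apply the previous proposition not directly to $E$, but to the module $E \oplus Z$. Observe first that $E \oplus Z$ is a non-free $k^{\text{th}}$ syzygy: $E$ is a $k^{\text{th}}$ syzygy by hypothesis, $Z = \Syz_1^R(E)$ is a $(k+1)^{\text{st}}$ syzygy, and the direct sum of two $k^{\text{th}}$ syzygies is again a $k^{\text{th}}$ syzygy; non-freeness is inherited from $E$. What remains is to exhibit a free submodule $F' \subseteq E \oplus Z$ with $x^s\bigl((E \oplus Z)/F'\bigr) = 0$ whose quotient $M := (E \oplus Z)/F'$ satisfies $M/xM \simeq (0 :_M x)$.

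The crucial input is the assumption $x^s \Ext_R^1(E,\cdot) \equiv 0$, which in particular annihilates the extension class $\epsilon \in \Ext_R^1(E, Z)$ of $0 \to Z \to F \to E \to 0$. By the standard fact on extensions killed by a scalar (equivalently part (1) of Lemma \ref{techlemma}), this produces a map $f: F \to Z$ whose restriction to $Z$ is multiplication by $x^s$. I would then embed $F$ into $E \oplus Z$ via $\phi(y) := (\pi(y), f(y))$, where $\pi: F \twoheadrightarrow E$ is the given surjection. Injectivity of $\phi$ follows since any $y$ in its kernel must lie in $Z$ and satisfy $x^s y = 0$, and $x$ is a non-zerodivisor on $Z$.

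The main computation is to identify $M = (E \oplus Z)/\phi(F)$ with $Z/x^s Z$. Every class of $M$ has a representative $(0, z)$ obtained by subtracting $\phi(y)$ for any lift $y$ of the $E$-component, while $\phi(F) \cap (\{0\} \oplus Z) = \{0\} \oplus x^s Z$ because an element $(0, z)$ in $\phi(F)$ must come from $y \in Z$ with $z = f(y) = x^s y$. Granted this identification, $x^s M = 0$ is automatic, and the required isomorphism $M/xM \simeq (0:_M x)$ reduces to noting that $M/xM \simeq Z/xZ$ while $(0:_M x) = x^{s-1} Z/x^s Z \simeq Z/xZ$ via multiplication by $x^{s-1}$; both identifications hinge on $x$ being a non-zerodivisor on $Z$. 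The previous proposition then yields $\rank_R(E \oplus Z) \geq 2k-1$. The main obstacle I anticipate is the bookkeeping for the identification $M \simeq Z/x^s Z$ and the verification of $M/xM \simeq (0:_M x)$, which are elementary but demand care in tracking how $\phi(F)$ sits inside $E \oplus Z$ and how the non-zerodivisor property of $x$ propagates to $Z$.
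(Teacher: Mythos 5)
Your proof is correct and takes essentially the same route as the paper: both apply the preceding proposition to $E\oplus Z$, realizing $F$ as a free submodule via the map $y\mapsto(\pi(y),f(y))$ induced by the splitting homotopy $f$ and identifying the quotient with $Z/x^sZ$. You merely spell out explicitly the embedding, the cokernel identification, and the check that $M/xM\simeq(0:_M x)$, which the paper compresses into a single pushout diagram.
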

\begin{proof}
 From the information given one may construct a pushout diagram with exact rows and columns:
$$
\xymatrixrowsep{20pt}
\xymatrixcolsep{40pt}
\xymatrix{
 0 \ar[r] & Z \ar[r]\ar[d]^{\cdot x^s} & Z\oplus E \ar[r]\ar[d] & E \ar[r]\ar@{=}[d] & 0\\
 0 \ar[r]& Z \ar[r]\ar@{->>}[d]		& F \ar[r]\ar@{->>}[d] & E \ar[r]            &	0 \\
         & Z/x^sZ \ar@{=}[r]           & Z/x^sZ                &                     & \\
}
$$
We apply the preceding proposition to the middle column (here $M=Z/x^sZ$).
\end{proof}

\begin{cor} 
 Let $R$ be a mixed characteristic local ring unramified at $(p)$ and let $E$ be a non-free $k^{th}$ syzygy ($k\geq 1$). If $E$ contains a free submodule $F$ such that $p(E/F)=0$, then $\rank_R(E)\geq 2k-1$.
\end{cor}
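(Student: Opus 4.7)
The plan is to derive this corollary as an immediate specialization of the preceding proposition, with the hypersurface element $x$ taken to be $p$. So the first step is to verify the three hypotheses of that proposition. Since $R$ is unramified at $(p)$, by definition $p \in m - m^2$, which is precisely the first requirement on $x$. For the second requirement, namely that the Syzygy Theorem hold over $\bar{R} = R/(p)$, I would invoke that $\bar{R}$ is a local ring containing the field $\mathbb{F}_p$ (and is regular/contains a field in whatever generality the hypothesis of the proposition allows), so that the classical Evans-Griffith Syzygy Theorem applies there.

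Next I would address the filtration hypothesis and the technical condition on $M = E/F$. The assumption $p(E/F)=0$ supplies directly the containment $x^s(E/F) = 0$ with $s = 1$ (in particular, it trivially holds for all $s \gg 0$). The delicate-looking condition $M/xM \simeq (0:_M x)$ becomes a tautology in this setting: since $pM = 0$, we have both $M/pM = M$ and $(0 :_M p) = M$, so the required isomorphism is just the identity on $M$. Thus every hypothesis of the previous proposition is satisfied.

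With all hypotheses in place, the proposition yields $\rank_R E \geq 2k-1$, which is exactly the conclusion of the corollary. No further argument is required; in particular, no fresh case analysis on $E_P$ for $P \in \Spec R$ is needed here, because that analysis is entirely absorbed into the proof of the ambient proposition via the depth lemma applied to the induced four-term exact sequence
\[
0 \lra \bar{M} \lra \bar{F} \lra \bar{E} \lra \bar{M} \lra 0.
\]

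The only conceivable obstacle is purely bookkeeping: one must check that the statement of the previous proposition really does include the case $s=1$ (it does, since ``$s \gg 0$'' is vacuously satisfied by any fixed $s$ witnessing $p^s(E/F)=0$, and here $s=1$ works). Thus the corollary is a one-line consequence, and the substance of the argument lies entirely in the proof of the preceding proposition rather than in any new ingredient.
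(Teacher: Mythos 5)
Your proposal is correct and takes essentially the same route as the paper: the paper's proof is the one-line observation that, with $M = E/F$, the condition $pM = 0$ gives $M/pM \simeq (0:_M p) \simeq M$, after which the preceding four-term-sequence proposition (applied with $x = p$) delivers the bound. Your extra bookkeeping (unramified $\Rightarrow p \in m - m^2$, $\bar{R}$ contains $\mathbb{F}_p$ so the Evans--Griffith theorem applies there, and $s=1$ suffices) is exactly what the paper leaves implicit.
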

\begin{proof}
 Let $M=E/F$. Since $pM=0$, one has $M/pM\simeq (0:_M p)\simeq M$ and the previous result yields the desired conclusion.
\end{proof}

\section{Final remarks on order ideals}

Let $R$ denote either an $\N$-graded ring in which $R_0$ is a DVR with maximal ideal generated by $p$ or a regular local ring of mixed characteristic $p$. Suppose that $E$ represents a $k^{th}$ syzygy over $R$ having finite projective dimension. A well-known reduction employed in section 2 is that if $E$ is a potential counterexample to the Syzygy Theorem then $E[p^{-1}]$ will necessarily be $R[p^{-1}]$-projective (in fact $E[p^{-1}]$ will be $R[p^{-1}]$-free in the graded case (see  \cite{EG2}, Lemma 7)). Having this property leads one to look for minimal generators $e$ such that $p^s\in O_E(e)$ for positive integers $s\gg 0$, since the Syzygy Theorem can be easily proven as a consequence of this fact (see \cite{EG2}, Proposition 5 and Theorem 6).

In the graded case such a conclusion is achieved in \cite{EG2}. If $R=R_0\oplus R_1\oplus\ldots$ is a standard graded Noetherian ring in which $R_0$ is a DVR having maximal ideal generated by $p$, $R_i$ are finitely generated torsion free $R_0$-modules and $E$ is a finitely generated graded $R$-module such that $E[p^{-1}]$ is $R[p^{-1}]$ projective, then $E[p^{-1}]$ is $R[p^{-1}]$ free (\cite{EG2}, Lemma 7) and furthermore some minimal homogeneous generator $e\in E$ has the desired property $p^s\in O_E(e)$ for $s\gg 0$ (\cite{EG2}, Theorem 8). 

In contrast to the graded case, in the regular local ramified case one cannot hope to establish the Syzygy Theorem in this manner. To illustrate, let $R$ be a regular local ring of mixed characteristic $p$ and suppose that $p\in m_R^2$ and $\dim R\geq 2$. Let $E=m_R$. Then $E[p^{-1}]=R[p^{-1}]$, however no minimal generator $e$ will have the property $p^s\in O_E(e)$ for $s\gg 0$. In fact $O_{m_R}(e)=eR$ for every $e\in m_R$ as a result of the fact that all homomorphisms $m_R\ra R$ are given by $R$-multiples of the natural inclusion.

Even in the unramified setting one can construct examples where the order ideals of minimal generators do not contain any power of $p$. Consider $I=(x^2+y^2+p^2,px,py)$ an ideal of the local ring $R=V[[x,y]]$, $V$ a DVR, $p\in m_V$. It is easy to check that $p^3\in m_RI$ ($p^3=p(x^2+y^2+p^2)-px(x+y)-py(y-x)$), but $p^2\notin I$ because such a statement would imply that $p\in(x,y)$, a contradiction. As remarked in the previous paragraph, order ideals of minimal generators must be principal and the computations above show that no power of $p$ can be contained in any such order ideal.

  The state of affairs in the unramified regular local case is left to be considered with respect to the described techniques. For this reason we have assumed throughout that $R$ is regular local of mixed characteristic $p$ such that $\bar{R}=R/(p)$ is again a regular local ring and we have considered what properties can be derived about order ideals in this context.

\bibliographystyle{amsalpha}

\end{document}